\def\qqq{\mathbb{Q}}
\def\ccc{\mathbb{C}}
\def\zzz{\mathbb{Z}}
\newtheorem{theorem}{Theorem}[section]
\newtheorem{corollary}[theorem]{Corollary}
\newtheorem{lemma}[theorem]{Lemma}
\newtheorem{proposition}[theorem]{Proposition}
\newtheorem{example}[theorem]{Example}
\newtheorem{remark}[theorem]{Remark}
\numberwithin{equation}{section}
\begin{document}

\author{C. G. Melles\\
Department of Mathematics\\
U. S. Naval Academy\\
Annapolis, MD 21402\\
cgg@usna.edu
\and
W. D. Joyner\\
wdjoyner@gmail.com} 

\title{On $p$-ary Bent Functions \\
and Strongly Regular Graphs}

%\subjclass[2010]{05E30}

\maketitle

\begin{abstract}
Our main result is a generalized Dillon-type theorem, 
giving graph-theoretic conditions which guarantee that a $p$-ary function in an even number of variables is bent, 
for $p$ a prime number greater than $2$. 
The key condition is that the component Cayley 
graphs associated to the values of the function are 
strongly regular, and either 
all of Latin square type, or all 
of negative Latin square type.
Such a Latin or 
negative Latin square type 
bent function 
is regular or weakly regular,
respectively. 
Its dual function 
has component Cayley graphs 
with the same parameters as those 
of the original function.
We also give a criterion for bent functions involving
structure constants of association schemes. 
We prove that if a $p$-ary
function with component Cayley graphs of 
feasible degrees
determines an amorphic association scheme, 
then it is bent. 
Since amorphic association 
schemes correspond to 
strongly regular graph 
decompositions of Latin or 
negative Latin square type, 
this result is equivalent to our 
main theorem. 
We show how to construct 
bent functions from orthogonal arrays and give some examples. 
\end{abstract}

\vfill
{\parindent0pt
{\bf Keywords}: bent function, strongly regular graph, Latin square type graph, amorphic association scheme }

%\AMS 2010 Subject Classification 05E30 association schemes, 
%Strongly regular graphs

\newpage
%\setcounter{tocdepth}{1}
%\tableofcontents

\newpage
\section{Introduction}
Bent functions over a finite field can be thought of as maximally non-linear functions. They can be defined using Walsh transforms, but can also be studied using the combinatorics of their level sets, the parameters of certain associated Cayley graphs, and the algebras generated by the adjacency matrices of these graphs. 
Dillon \cite{D74} characterized bent Boolean functions 
as those whose supports form combinatorial structures 
known as difference sets of elementary Hadamard type.
An alternative and closely related characterization 
is that a Boolean function 
is bent if and only if 
its Cayley graph is  
strongly regular with parameters 
$(\nu, k, \lambda, \mu)$ 
satisfying $\lambda= \mu$
(Bernasconi, Codenotti, and VanderKam \cite{BCV01}).
These theorems do not generalize in an obvious way 
for primes $p$ greater than 2. The Cayley graphs associated with a bent $p$-ary function are not necessarily strongly regular (see, for example,
\S \ref{subsec:GF52}).

We consider $p$-ary functions over the finite field 
$GF(p)$ with $p$ elements, where $p$ is a prime number
 greater than 2. A function 
 $f \colon GF(p)^{2m} \rightarrow GF(p)$
 determines a collection of component Cayley 
 graphs corresponding to the values of $f$.
When $f$ is even, 
these graphs are undirected.
We will usually also assume that $f$ vanishes at $0$
 (a weak assumption, since 
adding a constant to a bent function results in another 
bent function).
The component Cayley graphs are regular, with degrees 
determined by the sizes of the level sets of $f$.
Our main result is a generalization 
of the theorems of Dillon and 
Bernasconi, Codenotti, and VanderKam 
in one direction.
We prove that if the component Cayley graphs of $f$ 
are all strongly regular and  
are either all of Latin square type with 
feasible degrees, 
or all of negative Latin square type with 
feasible degrees,
then $f$ is bent. 
The feasibility conditions are simply 
conditions arising from 
the possible sizes of the level sets of
a bent function.

The proof of our main theorem uses an 
expression for the Walsh transform 
of $f$ in terms of the eigenvalues 
of the component Cayley graphs. 
When the component Cayley graphs are 
strongly regular and of feasible Latin or
negative Latin square type, 
we obtain formulas for the eigenvalues and 
their multiplicities, which 
we use to calculate the values of the Walsh 
transform and to show that $f$ is bent.

As a consequence of the proof outlined above, 
we also find that functions of feasible 
Latin square type are regular, 
and functions of feasible 
negative Latin square type are $(-1)$-weakly 
regular. 
In each case, 
the component Cayley graphs of the dual function 
are strongly regular, with the same parameters as those 
of the original function. 
The proof of this duality theorem 
uses a relationship between the component functions 
of the dual function and the Fourier transforms 
of the component functions of the original function.

The papers of 
Gol'fand, Ivanov, and Klin \cite{GIK94}, 
van Dam \cite{vD03}, 
and van Dam and Muzychuk \cite{vDM10} 
describe the close relationship between 
graphs of Latin and negative Latin square type and 
amorphic association schemes.
We say that a $p$-ary function $f$ 
is {\it amorphic} if 
its level sets determine an amorphic association 
scheme.  
In a previous paper, \cite{CJMPW16}, 
we showed that for any prime $p$ greater than 2, 
there are $(p+1)!/2$ bent amorphic functions 
of two variables
with algebraic normal form homogeneous of degree $p-1$
and such that 
the level sets corresponding to nonzero elements of 
$GF(p)$ all have size $p-1$.
In this paper, we generalize part of our previous 
result by proving that 
if an even 
$p$-ary function of $2m$ variables  
with level sets of feasible sizes is amorphic, 
then it is bent.
The key to the proof is a criterion for 
a function to be bent involving sums of 
structure constants of an association scheme.
We also use a result of 
Ito, Munemasa, and Yamada \cite{IMY91}
describing the structure constants 
of an amorphic association scheme.

In light of the relationship between 
Latin and negative Latin square 
type graphs and 
amorphic association schemes described 
in \cite{GIK94}, \cite{vD03}, and \cite{vDM10}, 
we see that  
a $p$-ary function is amorphic 
if and only if its component
Cayley graphs are strongly regular 
and either all of Latin square type, 
or all of negative Latin square type.
Thus, our criterion for bent functions 
involving eigenvalues and 
our criterion involving structure 
constants lead to equivalent 
theorems,
proven by different methods.

The existence of amorphic bent $p$-ary functions 
of $2m$ variables 
follows from the existence of 
$p$-class amorphic association schemes 
with corresponding graphs of appropriate degrees.  
Such amorphic association schemes can 
be constructed from orthogonal arrays of size $(N+1) \times N^2$, 
where $N=p^m$. 
Orthogonal arrays of these dimensions exist by a construction of 
Bush \cite{Bu52}.
We describe a construction of 
amorphic bent functions of Latin square type 
from orthogonal arrays and give examples 
for $n=2$ and $4$, and $p=3$, 5, and 7.
We also give examples of amorphic bent 
functions of negative Latin square type, 
and of bent functions whose component Cayley 
graphs are not all strongly regular.

It would be interesting to find a combinatorial generalization of 
the theorems of Dillon and 
Bernasconi, Codenotti, and VanderKam
in the other direction, giving simple graph-theoretic 
properties that the component Cayley graphs of 
a bent $p$-ary function must possess. 
In the case $p=3$, Tan, Pott, and Feng \cite{TPF10} show that if
$f \colon GF(3^{2m} ) \rightarrow GF(3)$ is
an even weakly regular bent function 
with $f(0)=0$, then the component Cayley 
graphs of $f$ are strongly regular 
and either all of Latin square or all of negative 
Latin square type. Using the 
theory of quadratic residues, 
Chee, Tan, and Zhang \cite{CTZ11} and 
 Feng, Wen, Xiang, and Yin \cite{FWXY13} 
 generalize this result for $p$ a prime greater than 2.
 However, their decompositions for $p>3$ 
 are not related to the component graphs studied in this paper.  

Our paper is structured as follows.
In Section 2, we study the level 
sets of a $p$-ary function.
We use a result of Kumar, Scholtz, and Welsh 
to calculate the possible sizes of the level sets 
of an even bent $p$-ary function of $2m$ variables which 
vanishes at 0.
The component Cayley graphs 
and the corresponding component 
functions of a $p$-ary function 
are defined in Section 3. 
We explain why the eigenvalues of these graphs 
are the values of the Fourier 
transforms of the component functions. 
In Section 4, we state a criterion for 
a $p$-ary function to be bent, involving 
eigenvalues of the component Cayley graphs.
We give formulas for the eigenvalues 
of the component Cayley graphs of 
a function of feasible 
Latin or negative Latin square type in Section 5. 
We revisit the theorems of Dillon and 
Bernasconi, Codenotti, and VanderKam 
in this context.
In Section 6, we prove that $p$-ary 
functions of feasible Latin or negative Latin 
square type are bent. 
We describe their dual functions in Section 7.
In Section 8, we discuss 
$p$-ary functions that determine association schemes.
We state a structure constant criterion for such a function 
to be bent. 
Using this criterion, 
we prove that amorphic functions with 
component Cayley graphs of feasible degrees 
are bent. 
In Section 9, we describe how to construct amorphic 
bent functions of Latin square type 
from orthogonal arrays. 
Section 10 is devoted to examples, 
most of which were constructed with the 
aid of computers. We conclude with 
some questions and ideas for further study.

\section{Sizes of level sets of bent functions}
\label{sec:2}
In this section, 
we study even $p$-ary functions of 
an even number of variables, vanishing at 0.
We obtain necessary, but not sufficient, 
conditions for such a function to be bent 
by considering the sizes of its level curves.
We refer to these conditions 
as feasibility conditions.
The feasibility conditions are derived from the 
possible sizes of the level sets in the 
bent case, which we calculate using a result of 
Kumar, Scholtz, and Welsh \cite{KSW85}. 
We also state the feasibility conditions, equivalently, in 
terms of the degrees of a function's component Cayley 
graphs, in \S \ref{subsec:feasible-degrees}.

We fix, once and for all, an ordering for 
$GF(p)^n$. That ordering will be used for all vectors 
whose coordinates are indexed by $GF(p)^n$, 
all matrices whose entries are indexed by 
$GF(p)^n \times GF(p)^n$, and all vertices of associated 
component Cayley graphs defined below.

We routinely identify the elements of $GF(p)$ with 
$\{0, 1, 2, \ldots , p-1\}$.

\subsection{The Walsh transform}
Let $p$ be a prime number, and 
let $\zeta$ be the $p$th root of unity given by 
\[
\zeta = e^{\frac {2\pi i} p}.
\]
Let $n$ be a positive integer.
A $p$-ary function $f \colon GF(p)^n \to GF(p)$ 
determines a well-defined complex-valued function
$\zeta^f \colon GF(p)^n \rightarrow \ccc$.
The {\it Walsh} or {\it Walsh-Hadamard transform} of $f$ is 
defined to be the function $W_f \colon GF(p)^n \rightarrow \ccc$ given by
\begin{equation*}
%\label{eqn:WT}
	W_f(x) =
	\sum_{y \in GF(p)^n}
	\zeta^{f(y)- \langle x,y\rangle},
\end{equation*}
where $\langle \ , \ \rangle$ is the usual inner product
on $GF(p)^n$.

\subsection{The Fourier transform}
If $g \colon GF(p)^n \rightarrow \ccc$ 
is a complex-valued function on $GF(p)^n$,  
the {\it Fourier transform} of $g$ is 
the function $\hat g \colon GF(p)^n \rightarrow \ccc$ given by
\begin{equation*}
%\label{eqn:FT}
	\hat {g}(x) =
	\sum_{y \in GF(p)^n}
	g(y) \zeta^{- \langle x,y\rangle}.
\end{equation*}
Thus, the Walsh transform of $f \colon GF(p)^n \rightarrow GF(p)$ 
is the Fourier transform of $\zeta^f \colon GF(p)^n \rightarrow \ccc$.

\subsection{Bent functions}
Let $f \colon GF(p)^n \to GF(p)$ be a $p$-ary function.
We say that $f$ is {\it bent} if
	\[
	|W_f(x)|=p^{\frac n 2},
	\]
for all $x$ in $GF(p)^n$. 

We note that if $p$-ary functions $f_1$ and $f_2$ differ by a constant 
element of $GF(p)$, then $f_1$ is bent if and only if $f_2$ is 
bent, so from now on we will assume that $f(0)=0$.

We will also assume that $f$ is {\it even}, i.e., $f(x)=f(-x)$, for all $x$ in $GF(p)^n$.
When $f$ is even, the component Cayley graphs of 
$f$ are undirected (see \S \ref{subsec:cCg}).

\subsection{Level sets of a $p$-ary function}
The {\it level sets} of $f$,
for $1 \leq i \leq p-1$,
are the sets 
	\[
	D_i = \{x \in GF(p)^n \ | \ f(x) = i\}.
	\]
For consistency with our later discussions of component 
Cayley graphs and association schemes, 
we define  
	\[
	D_p = \{x \in GF(p)^n \ | \ x \neq 0 \ \text{and} \ f(x) = 0\}
	\quad \text{and} \quad D_0=\{0\}.
	\]
Thus, the level set $f^{-1}(0)$ is the union 
of $D_p$ and $\{ 0 \}$.
In our discussion of the sizes of the level sets 
of $f$, it is convenient to express our results 
in terms of the sizes of the sets $D_i$, 
because these sizes are the degrees of the component 
Cayley graphs, which we define in \S \ref{subsec:cCg} below.

\subsection{Level sets of a bent Boolean function}
\label{subsec:bentBool}
In the Boolean ($p=2$) case, the Walsh transform takes 
integer values. If $f \colon GF(2)^n \to GF(2)$ 
is bent, $n$ must be even, since $W_f(0) = \pm 2^{\frac n 2}$.
Setting $m = \frac n 2$, we find that the only possible sizes 
for $D_1$ are 
\[
|D_1| = 2^{2m-1} \pm 2^{m-1}.
\] 

\subsection{Kumar--Scholtz--Welsh theorem}
The description of the Walsh transform in the next theorem
is a key step in our 
calculation of the sizes of level sets of 
even bent $p$-ary functions.
This theorem follows directly from a result of Kumar, Scholtz, 
and Welsh \cite[Property~7]{KSW85}, but we include a proof 
for the sake of completeness.

\begin{theorem}[Kumar-Scholtz-Welsh]
\label{propn:KSW}
Suppose that $f \colon GF(p)^{2m} \rightarrow GF(p)$ is an 
even bent function, where $p$ 
is a prime number greater than $2$, 
and $m$ is a positive integer. 
Then, for every $x$ in $GF(p)^{2m}$,
	\[
	W_f(x)=\pm \zeta^j p^m
	\]
for some integer $j$ with $0 \leq j \leq p-1$.
\end{theorem}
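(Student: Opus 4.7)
My plan is to work in the $p$-th cyclotomic field $\qqq(\zeta)$ and reduce the problem to showing that $W_f(x)/p^m$ is a root of unity there. Because $p$ is an odd prime, the roots of unity in $\qqq(\zeta)$ are precisely the $2p$-th roots of unity $\{\pm \zeta^j : 0 \le j \le p-1\}$, so this will immediately give the desired conclusion.

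First I would observe that $W_f(x) = \sum_y \zeta^{f(y) - \langle x, y\rangle}$ lies in the ring of integers $\zzz[\zeta]$, and that bentness yields the identity $W_f(x)\,\overline{W_f(x)} = p^{2m}$ in $\zzz[\zeta]$. Hence $W_f(x)$ divides $p^{2m}$. Since $p$ is totally ramified in $\zzz[\zeta]$ as $(p) = (1-\zeta)^{p-1}$, the unique prime above $p$ is $\mathfrak{p} := (1-\zeta)$, and the only primes that can appear in the factorization of $(W_f(x))$ are divisors of $(p^{2m})$. Thus $(W_f(x)) = \mathfrak{p}^k$ for some integer $k$ with $0 \le k \le 2m(p-1)$.

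To pin down $k$ I would invoke the Galois group $\mathrm{Gal}(\qqq(\zeta)/\qqq) = \{\sigma_a : a \in (\zzz/p\zzz)^\times\}$, where $\sigma_a(\zeta) = \zeta^a$. Since complex conjugation acts on $\qqq(\zeta)$ as $\sigma_{-1}$, one has $\overline{\sigma_a(W_f(x))} = \sigma_{-a}(W_f(x))$, and consequently
\[
|\sigma_a(W_f(x))|^2 \;=\; \sigma_a(W_f(x)) \cdot \sigma_{-a}(W_f(x)) \;=\; \sigma_a\!\left(W_f(x)\,\overline{W_f(x)}\right) \;=\; p^{2m}
\]
for every $a \in (\zzz/p\zzz)^\times$. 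Taking the complex absolute value of $N(W_f(x)) = \prod_a \sigma_a(W_f(x))$ therefore yields $|N(W_f(x))| = p^{m(p-1)}$, while the ideal norm gives $N(\mathfrak{p}^k) = p^k$, so $k = m(p-1)$. Hence $(W_f(x)) = \mathfrak{p}^{m(p-1)} = (p^m)$, and there is a unit $u \in \zzz[\zeta]^\times$ with $W_f(x) = u\,p^m$; by construction $|\sigma_a(u)| = 1$ for every $a$. Kronecker's theorem on algebraic integers whose conjugates all lie on the unit circle then identifies $u$ as a root of unity in $\qqq(\zeta)$, which must be one of $\pm \zeta^j$ for $0 \le j \le p-1$, completing the argument.

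I expect the step most needing care to be the ideal-factorization calculation: the total ramification of $p$ is what forces $(W_f(x))$ to be supported only at $\mathfrak{p}$, and the Galois-invariant norm computation is what pins the exponent to $m(p-1)$ exactly. No deep arithmetic input is required, but the bookkeeping --- keeping track of element, ideal, and absolute-value norms in parallel --- must be done cleanly. I note in passing that the evenness hypothesis on $f$ plays no role in the approach outlined above; bentness alone is enough.
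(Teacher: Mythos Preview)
Your proof is correct and follows essentially the same architecture as the paper's: work in $\zzz[\zeta]$, use total ramification of $p$ to see that $(W_f(x))$ is a power of $(1-\zeta)$, pin down the exponent to conclude $W_f(x)=u\,p^m$ for a unit $u$, and then invoke Kronecker's theorem after checking that every Galois conjugate of $u$ has absolute value $1$.

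The one noteworthy difference is in how that last check is carried out. The paper computes $\sigma_k(W_f(x)) = W_{kf}(kx)$ and then argues separately (via the balanced-derivative criterion) that $kf$ is bent, so that $|W_{kf}(kx)|=p^m$. Your route is more self-contained: you observe directly that complex conjugation is $\sigma_{-1}$, so $|\sigma_a(W_f(x))|^2 = \sigma_a(W_f(x)\,\overline{W_f(x)}) = \sigma_a(p^{2m}) = p^{2m}$, avoiding any appeal to the bentness of $kf$. This same computation also gives you the norm, which you use to pin down the exponent $k=m(p-1)$; the paper instead gets the exponent by the simpler observation that complex conjugation fixes the prime $(1-\zeta)$, forcing $(W)$ and $(\overline{W})$ to have equal exponents. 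Your remark that evenness is not used is accurate and applies equally to the paper's argument.
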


\begin{proof} 
Fix $x$ in $GF(p)^{2m}$, and let $W=W_f(x)$.
It is sufficient to show that $p^{-m}W$ is a root of 
unity in $\qqq (\zeta)$, since the only roots 
of unity in $\qqq (\zeta)$ are those of the form 
$\pm \zeta^j$ 
(see, e.g., \cite[p.~158]{BS66} or 
\cite[Corollary 3.5.12]{C07}). We will first show that 
$p^{-m}W$ is an element of $\zzz[\zeta]$, and then 
use a theorem of Kronecker 
\cite{K1857}, which states that 
an element of $\zzz[\zeta]$, all of whose 
conjugates have magnitude 1,  
is a root of unity in $\qqq (\zeta)$
(for a more accessible source, 
see \cite[Corollary 3.3.10]{C07}). 

For $\alpha$ in $\zzz [\zeta]$,  
let $\langle \alpha \rangle$ denote the 
principal ideal generated by $\alpha$ in $\zzz [\zeta]$. 
Note that the ideal $\langle p \rangle$ 
has a factorization as $\langle p \rangle=\langle 1 - \zeta \rangle^{p-1}$, and the ideal $\langle 1-\zeta \rangle$
in $\zzz [\zeta]$ is prime 
(see, e.g., \cite[p.~157]{BS66} or \cite[Lemma 1.4]{W82}).
Let $\overline W$ be the complex conjugate of $W$.
Since $f$ is bent, $W \overline W = p^{2m}$.
Thus, the ideal in $\zzz[\zeta]$ generated 
by $W \overline W $ has a factorization into prime ideals as 
$\langle 1 - \zeta \rangle^{2m(p-1)}$. 
Suppose that
$\langle W \rangle=\langle 1-\zeta \rangle^k$ and 
$\langle \overline W \rangle = \langle 1-\zeta \rangle^\ell$ for some integers $k$ and $\ell$. 
Since $\langle 1 - \overline \zeta\rangle
= \langle 1 - \zeta\rangle$, 
we also have $\langle \overline W\rangle 
= \langle 1 - \overline \zeta\rangle^k
 = \langle 1 - \zeta\rangle^k$, 
so $k=\ell$.
Therefore, $\langle W\rangle  = \langle \overline W\rangle 
= \langle p^m \rangle$.
It follows that $W = u p^m$ for some unit $u$
of magnitude 1 in $\zzz[\zeta]$. 

The conjugates of $u$ are the images of $u$ under 
the elements of the Galois group of $\qqq (\zeta)$.
This Galois group consists of the $p-1$ automorphisms  
$\sigma_k$ of $\qqq (\zeta)$, which are 
determined by the equations  
$\sigma_k(\zeta)=\zeta^k$, 
for $1 \leq k \leq p-1$.
It is straightforward to show that 
$\sigma_k(W_f(x)) = W_{kf}(kx)$.
It can also be shown 
that $kf$ is bent, for $k$ in $\{1, 2, \dots , p-1\}$,
for example, by using the balanced derivative criterion 
of \S \ref{subsec:balanced-derivative}.
Thus, all the conjugates of $u$ under the actions 
of the maps $\sigma_k$, 
i.e., all the images $\sigma_k(u)$, 
have magnitude 1. 
It follows from the theorem of Kronecker 
\cite{K1857} mentioned above, that 
$u$ is a root of unity in $\qqq (\zeta)$. 
Therefore $u = \pm \zeta^j$ for some $j$ with 
$0 \leq j \leq p-1$.
\hfill 
\end{proof}

\subsection{Feasible sizes of level sets of $p$-ary functions}
\label{subsec:feas-size-set}
In this section, 
we calculate the possible sizes of level sets 
of even bent $p$-ary functions of 
$2m$ variables, vanishing at 0.
At the end of this section, we state feasibility 
conditions for a function to be bent, 
based on these sizes.
As a first step toward this goal, we prove the following 
corollary of the theorem of Kumar, Scholtz, and Welsh. 

\begin{corollary}
\label{cor:W_f0-real}
Suppose that $f \colon GF(p)^{2m} \to GF(p)$ is an even bent function 
such that $f(0)=0$, 
where $p$ is a prime number greater than 2, 
and $m$ is a positive integer.
Then 
\[
W_f(0)=\pm p^m.
\]
Furthermore, the level sets $D_i$, for $1 \leq i \leq p-1$, 
are all the same size, i.e., 
     \[
	| D_1| = |D_2| = \cdots =|D_{p-1}|.
	\]
\end{corollary}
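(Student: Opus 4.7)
The plan is to apply Theorem \ref{propn:KSW} at $x=0$, which gives $W_f(0) = \pm \zeta^j p^m$ for some $0 \le j \le p-1$, and then to pin down $j=0$ using a parity argument in $\zzz[\zeta]$.

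First, I would unfold the definition of the Walsh transform at $0$ in terms of the level-set sizes:
\begin{equation*}
W_f(0) \;=\; \sum_{y \in GF(p)^{2m}} \zeta^{f(y)} \;=\; n_0 + \sum_{i=1}^{p-1} n_i \zeta^{i},
\end{equation*}
where $n_0 = 1 + |D_p|$ and $n_i = |D_i|$ for $1 \le i \le p-1$. Because $p>2$, the involution $y \mapsto -y$ on $GF(p)^{2m}$ has only $0$ as a fixed point. Since $f$ is even and $f(0)=0$, each of the sets $D_1,\ldots,D_{p-1},D_p$ avoids $0$ and is closed under this involution, so each has even cardinality. Thus $n_1,\ldots,n_{p-1}$ are even while $n_0 = 1 + |D_p|$ is odd.

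Next, I would use the fact that $\{1,\zeta,\ldots,\zeta^{p-2}\}$ is a $\zzz$-basis of $\zzz[\zeta]$, so that substituting $\zeta^{p-1} = -1-\zeta-\cdots-\zeta^{p-2}$ gives the unique expansion
\begin{equation*}
W_f(0) \;=\; (n_0 - n_{p-1}) + \sum_{i=1}^{p-2} (n_i - n_{p-1})\, \zeta^{i}.
\end{equation*}
Expanding $\pm \zeta^j p^m$ in the same basis and matching coefficients determines the differences $n_i - n_{p-1}$ case by case in $j$.

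The heart of the argument, and the step I expect to require the most care, is ruling out $j \ne 0$. If $1 \le j \le p-2$, coefficient comparison forces $n_0 - n_{p-1}=0$, contradicting the fact that $n_0$ is odd and $n_{p-1}$ is even. If $j=p-1$, coefficient comparison forces $n_1 - n_{p-1} = \mp p^m$, which is odd, again contradicting the parities of $n_1$ and $n_{p-1}$. Only $j=0$ survives, and in that case the same coefficient comparison yields $n_0 - n_{p-1} = \pm p^m$ together with $n_1 - n_{p-1} = \cdots = n_{p-2} - n_{p-1} = 0$. This simultaneously gives $W_f(0) = \pm p^m$ and $|D_1| = |D_2| = \cdots = |D_{p-1}|$, completing the proof.
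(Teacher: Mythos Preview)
Your proof is correct and follows essentially the same approach as the paper: apply Kumar--Scholtz--Welsh at $x=0$, expand $W_f(0)$ as an integer combination of $p$th roots of unity using the level-set sizes, and rule out $j\neq 0$ by the parity observation that each $|D_i|$ is even. The only cosmetic difference is the choice of $\zzz$-basis: the paper uses $\{\zeta,\zeta^2,\ldots,\zeta^{p-1}\}$, which lets it treat all $j\in\{1,\ldots,p-1\}$ uniformly, whereas your choice of $\{1,\zeta,\ldots,\zeta^{p-2}\}$ forces you to handle $j=p-1$ separately.
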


\begin{proof}
Let $k_i =|D_i|= | f^{-1}(i) |$ for $1 \leq i \leq p - 1$, and 
let $k_p =|D_p|= |f^{-1}(0)| - 1$.
Notice that since $f$ is even and $f(0)=0$, $k_i$ must be an even integer, 
for $1 \leq i \leq p$.

From the definition of the Walsh transform, 
\begin{equation}
\label{eqn:W_f0}
W_f(0) = 1 + k_1 \zeta + k_2 \zeta^2 + \cdots + k_{p-1} \zeta^{p-1}  + k_p.
\end{equation}
By the result of Kumar, Scholtz, and Welsh, 
\begin{equation}
\label{eqn:Wf0KSW}
W_f(0)=\pm \zeta^j p^m,
\end{equation}
for some $j$ such that $0 \leq j \leq p-1$.
Since $1 + \zeta+ \zeta^2 + \cdots + \zeta^{p-1} = 0$,
Equation (\ref{eqn:W_f0}) can be 
rewritten as 
\begin{equation*}
W_f(0) = \sum_{i = 1}^{p-1} (k_i - 1 - k_p)\zeta^i .
\end{equation*}
The roots of unity $\zeta, \zeta^2, \ldots , \zeta^{p-1}$ are linearly independent over $\qqq$. 
It follows from Equation (\ref{eqn:Wf0KSW}) 
that if $1 \leq j \leq p-1$, then
$k_i - 1 - k_p = 0$, for $i \neq j$.
But this is impossible, since $k_i$ and $k_p$ are both even. 
Therefore $j=0$, and $W_f(0) = \pm p^m$.

Furthermore, we must have $k_i-1-k_p=-W_f(0)$ for 
$1 \leq i \leq p-1$. Therefore $k_1 = k_2 = \cdots =k_{p-1}$, i.e., 
  \[
	| D_1| = |D_2| = \cdots =|D_{p-1}|.
	\]
\hfill \end{proof}

We now calculate the possible sizes of level sets 
of even bent $p$-ary functions of 
$2m$ variables, vanishing at 0.

\begin{proposition}
\label{prop:sizeD_i}
Suppose that $f \colon GF(p)^{2m} \rightarrow GF(p)$ is an 
even bent function such that $f(0)=0$, where $p$ 
is a prime number greater than 2, and $m$ is a positive integer. 
Then 
the possible sizes of the sets 
$D_i$ 
are 
\[
|D_i| = 
(N-1) \frac N p,
\]
for $1 \leq i \leq p-1$, and
\[
|D_p| = (N-1) \left( \frac N p + 1 \right),
\]
where $N=W_f(0) = \pm p^m$.
\end{proposition}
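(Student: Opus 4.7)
The plan is to leverage Corollary~\ref{cor:W_f0-real}, which has already established that $W_f(0) = N = \pm p^m$ and that $|D_1| = |D_2| = \cdots = |D_{p-1}|$. Writing $k = |D_i|$ for $1 \leq i \leq p-1$, the problem reduces to solving for $k$ and $|D_p|$ in terms of $N$ and $p$ using two linear relations.

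First, I would extract from the proof of Corollary~\ref{cor:W_f0-real} the identity $k_i - 1 - k_p = -W_f(0)$, which (applied with $k_i = k$ and $W_f(0) = N$) gives
\[
|D_p| \;=\; k + N - 1.
\]
Second, since $\{0\}, D_1, \ldots, D_{p-1}, D_p$ partition $GF(p)^{2m}$, counting points yields
\[
1 + (p-1)k + |D_p| \;=\; p^{2m} \;=\; N^2.
\]

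Substituting the first relation into the second gives $pk + N = N^2$, so
\[
k \;=\; \frac{N(N-1)}{p} \;=\; (N-1)\frac{N}{p},
\]
which is the claimed formula for $|D_i|$, $1 \leq i \leq p-1$. Plugging this back into $|D_p| = k + N - 1$ produces
\[
|D_p| \;=\; \frac{N(N-1)}{p} + N - 1 \;=\; (N-1)\left( \frac{N}{p} + 1 \right),
\]
matching the stated formula. One should also note that both expressions are nonnegative integers, which follows immediately from $p \mid N$ (since $N = \pm p^m$ with $m \geq 1$).

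The proof is essentially mechanical once Corollary~\ref{cor:W_f0-real} is in place; there is no substantive obstacle, since the deep content---that $W_f(0)$ is real and that the nonzero level sets have equal size---has already been absorbed into the corollary via the Kumar--Scholtz--Welsh theorem.
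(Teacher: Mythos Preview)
Your proposal is correct and follows essentially the same route as the paper: invoke Corollary~\ref{cor:W_f0-real} for $N=W_f(0)=\pm p^m$ and the equality $|D_1|=\cdots=|D_{p-1}|=k$, then combine the relation $k_p=k+N-1$ with the partition identity $1+(p-1)k+k_p=N^2$ to solve for $k$ and $k_p$. The only cosmetic difference is that the paper re-derives $k_p=k+N-1$ directly from the expression for $W_f(0)$ using $\zeta+\cdots+\zeta^{p-1}=-1$, whereas you extract it from the proof of Corollary~\ref{cor:W_f0-real}; these are the same computation.
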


\begin{proof} 
By Corollary \ref{cor:W_f0-real}, the Walsh transform of $f$  at $0$ is 
$W_f(0)=\pm p^m$. 
Also by Corollary \ref{cor:W_f0-real}, the sizes of the level sets $D_1, D_2, \ldots , D_{p-1}$ are all equal.  
Let $k = |D_i|$, for $1 \leq i \leq p-1$, and let 
$k_p=|D_p|$. Let $N = W_f(0)$. Since 
\[
\{0\} \cup D_1 \cup D_2 \cup \cdots \cup D_p = GF(p)^{2m},
\]
the constants $k$ and $k_p$ are related by the equation.
\begin{equation}
\label{eqn:union-D_i}
1+(p-1)k+k_p=p^{2m}=N^2.
\end{equation}
By the definition of the Walsh transform at 0, 
\begin{equation}
\label{eqn:Walsh-count}
1+k\left( \zeta+\zeta^2 + \cdots + \zeta^{p-1} \right) + k_p=N.
\end{equation}
Since $\zeta+\zeta^2 + \cdots + \zeta^{p-1}=-1$, Equation 
(\ref{eqn:Walsh-count}) can be rewritten as 
\begin{equation*}
k_p=k+N-1.
\end{equation*}
Substituting into Equation (\ref{eqn:union-D_i}) we find that 
\[
pk +N=N^2.
\]
Hence,
\[
k= \left( N - 1 \right)\frac {N} p 
\]
and 
\[
k_p=\left(N-1 \right)\left( \frac {N} p + 1 \right).
\]
\hfill
\end{proof}

\begin{remark}
{\rm
A straightforward calculation shows that if 
$f$ satisfies the hypotheses of Proposition \ref{prop:sizeD_i},
then $p$ divides the norm-squared of the \lq\lq signature"
$(|f^{-1}(0)|, |f^{-1}(1)|, \dots , |f^{-1}(p-1))|$ of the function $f$, 
i.e., $p$ divides the quantity
\[
| \{0\} \cup D_p|^2+\sum_{i=1}^{p-1} |D_i|^2.
\]
}
\end{remark}

We have described the possible sizes 
of the level sets of an even bent function of 
$2m$ variables in the Boolean case, in
\S \ref{subsec:bentBool}, and 
in the $p$-ary case, for $p$ 
a prime number greater than 2, in 
Proposition \ref{prop:sizeD_i}.
These results lead to the following feasibility conditions 
for a function to be bent.

Let $f \colon GF(p)^{2m} \rightarrow GF(p)$ 
be an even function with $f(0)=0$, 
where $p$ is a prime number, and $m$ is a positive integer. 
Let $D_i = f^{-1}(i)$ for $1 \leq i \leq p-1$, and 
let $D_p = f^{-1}(0) \setminus \{0\}$.
We say that the level sets of $f$ 
are of {\it feasible sizes} if, for $1 \leq i \leq p$, 
\begin{equation}
\label{eqn:feasible-sizes}
|D_i| = (N-1)r_i, 
\end{equation}
where 
\[
N = \pm p^m, \quad
r_i = \frac N p \ \text{for $1 \leq i \leq p-1$}, 
\quad \text{and} \quad r_p = \frac N p + 1.
\]
A function whose level sets are not of feasible sizes 
cannot be bent. 
In the next section, we will 
state these feasibility conditions, equivalently, in 
terms of the degrees of a function's component Cayley 
graphs (see \S \ref{subsec:feasible-degrees}).

\section{Cayley graphs of $p$-ary functions}
In this section, we describe 
a collection of regular graphs 
$\{\Gamma_1, \Gamma_2, \dots , \Gamma_p\}$, 
the component Cayley graphs, 
associated to a $p$-ary function $f$.
We define the 
component functions $f_i$
of $f$ to be the indicator functions of the sets $D_i$
described above.
The eigenvalues of the adjacency matrix of $\Gamma_i$ 
are the values of the 
Fourier transform of $f_i$.
The adjacency matrices of the component Cayley graphs commute.

\subsection{Component functions of a $p$-ary function}
Suppose that $f \colon GF(p)^n \rightarrow GF(p)$ is 
a $p$-ary function.
Recall that we define $D_i = f^{-1}(i)$, for $1 \leq i \leq p-1$, 
and $D_p = f^{-1}(0)\setminus \{0\}$.
The {\it component functions}
$f_i \colon GF(p)^n \rightarrow \ccc$ 
of $f$ are defined to be the indicator functions of the 
sets $D_i$, given by
\begin{equation}
\label{eqn:defn-fi}
f_i(x)=
\begin{cases}
1 &\qquad \text{if $x \in D_i$,}\\
0 &\qquad \text{otherwise,}
\end{cases}
\end{equation}
for $1 \leq i \leq p$.

\subsection{Component Cayley graphs of a $p$-ary function}
\label{subsec:cCg}
Let $f \colon GF(p)^n \rightarrow GF(p)$ be an 
even function with $f(0)=0$, where $p$ 
is a prime number greater than 2, 
and $n$ is a positive integer. 
The function $f$ determines a graph decomposition 
$\{ \Gamma_1, \Gamma_2, \dots , \Gamma_p\}$  
 of the complete graph on the vertex set $GF(p)^n$.
For $1 \leq i \leq p-1$, 
there is an edge in $\Gamma_i$ 
between distinct vertices $x$ and $y$ in $GF(p)^n$ 
if $f(x-y)=i$, i.e., if $x-y \in D_i$.
There is an edge in $\Gamma_p$
between distinct vertices $x$ in $y$ in $GF(p)^n$
if $f(x-y)=0$, i.e., if $x-y \in D_p$.
Note that these graphs may be considered undirected, since $f$ is even, 
so $f(x-y)=f(y-x)$.
The graph $\Gamma_i$ is the Cayley graph 
of the pair $(GF(p)^n, D_i)$.
We refer to the graphs $\Gamma_i$ as the 
{\it component Cayley graphs} or simply 
the {\it Cayley graphs} of $f$.
We can also regard $\Gamma_i$ as the Cayley graph of 
the component function $f_i$. The graph $\Gamma_i$ is regular 
of degree $| D_i|$, i.e., every vertex is of degree 
$|D_i|$.

For example, let $f \colon GF(3)^2 \rightarrow GF(3)$ be given 
by $f(x_0,x_1)=-x_0^2+x_1^2$. 
The component Cayley 
graphs $\Gamma_1$, $\Gamma_2$, and $\Gamma_3$ of $f$ 
are shown in Figure \ref{fig:3-aryLST}.

\begin{figure}[ht]
\begin{center}
\begin{tabular}{ccc}
\includegraphics[scale=0.30]{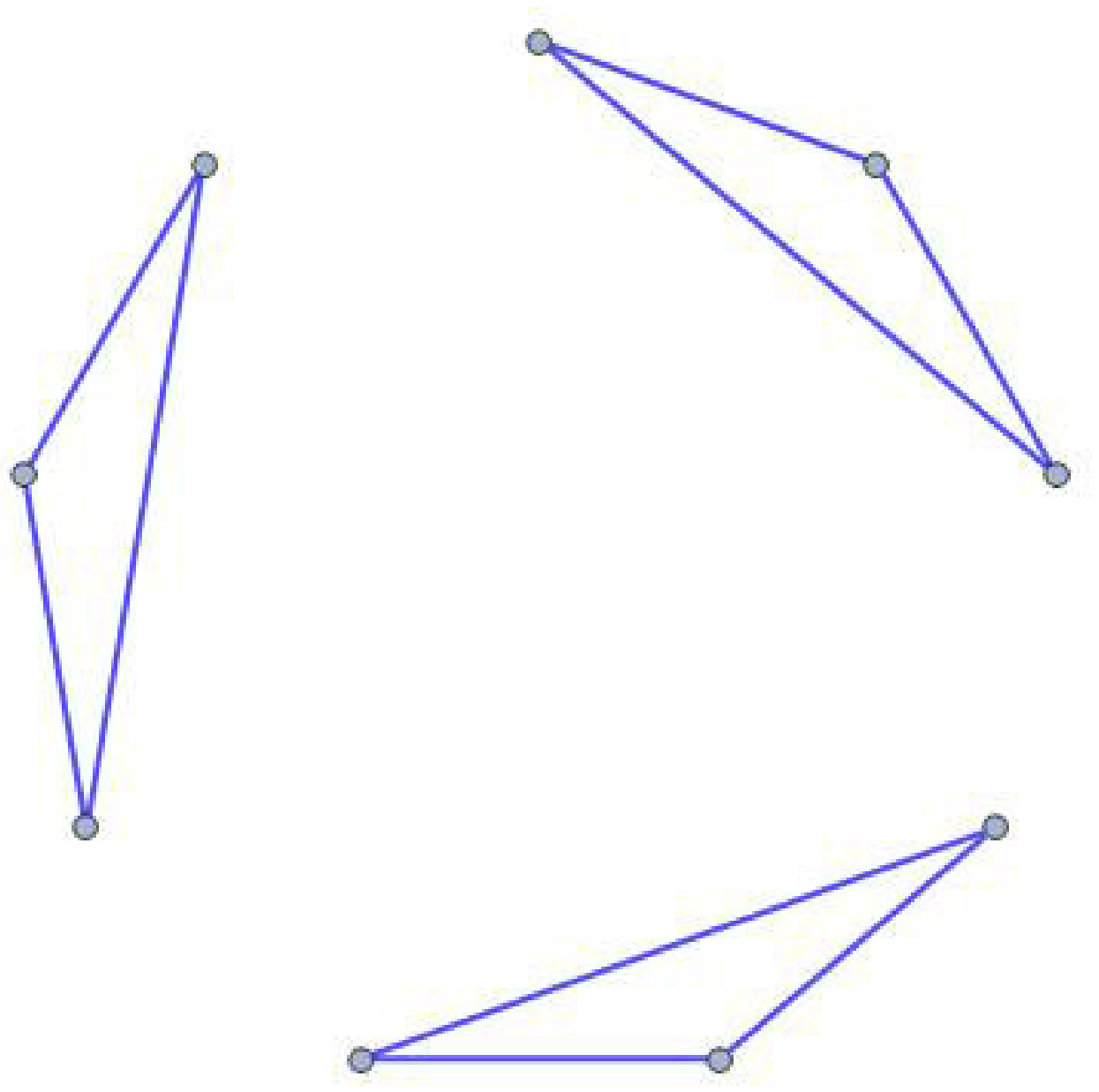}&
\includegraphics[scale=0.30]{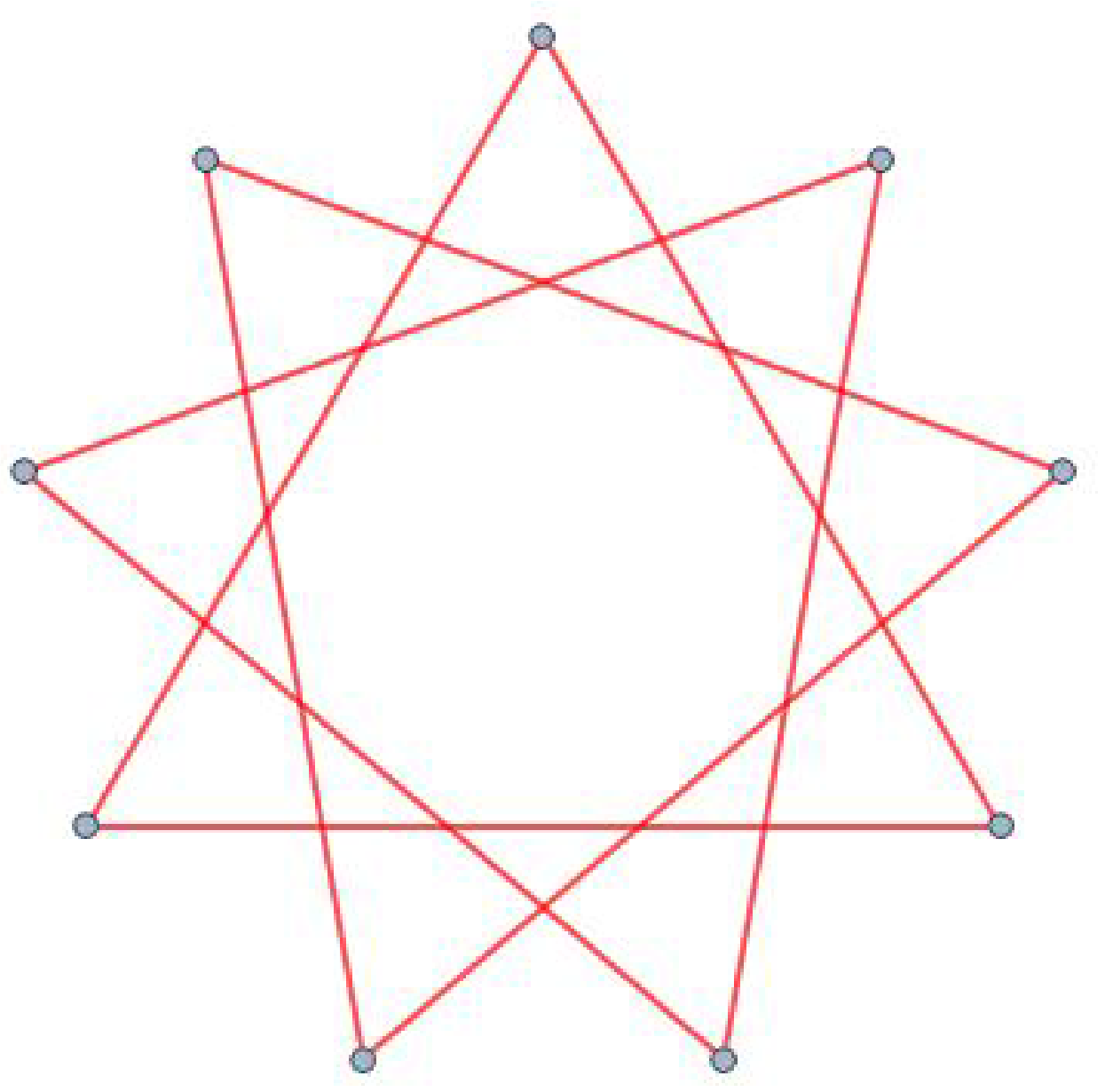}&
\includegraphics[scale=0.30]{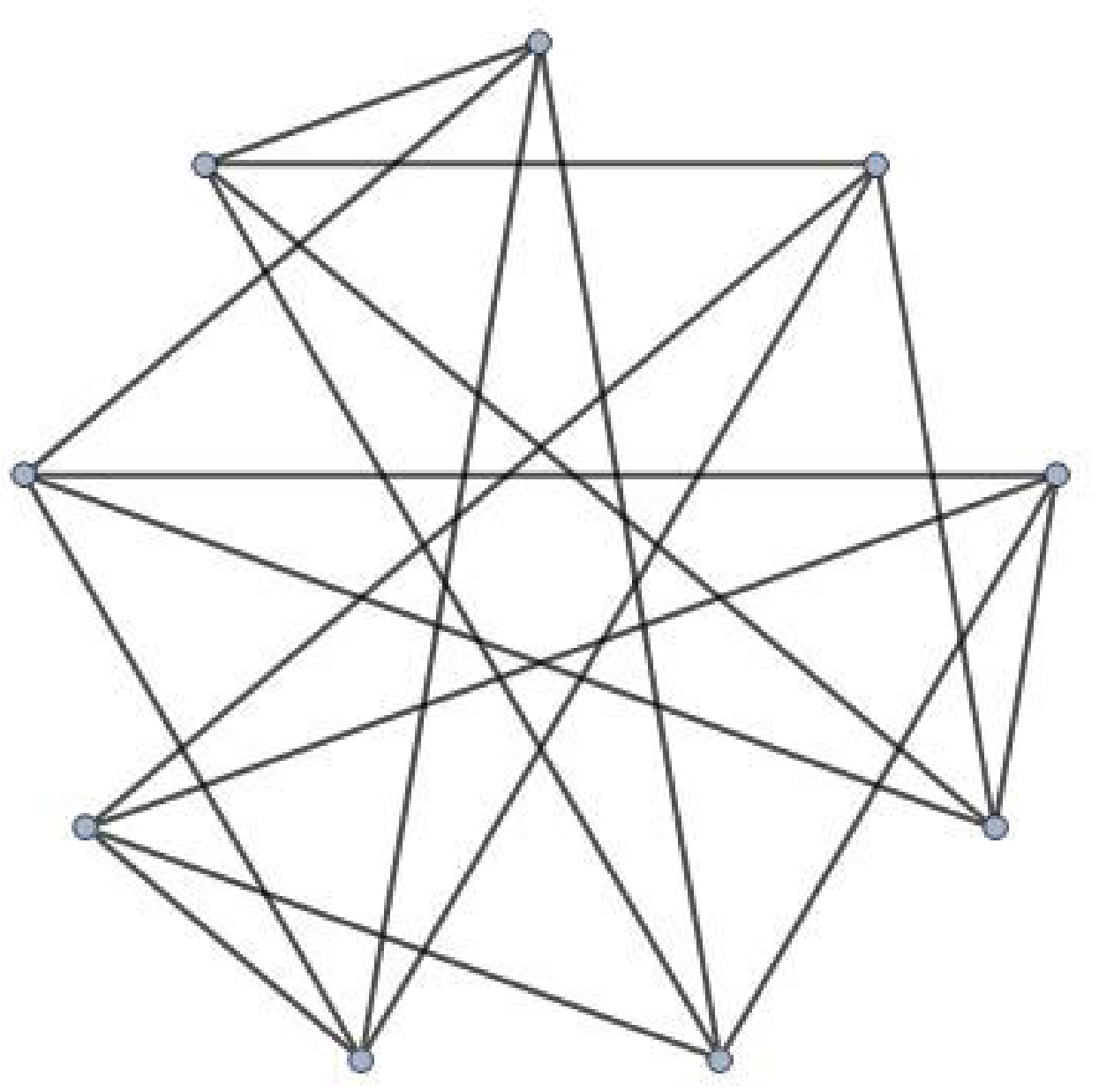}
\end{tabular}
\end{center}
\caption{{The Cayley graphs of the $3$-ary function $-x_0^2+x_1^2$.} }
\label{fig:3-aryLST}
\end{figure}

\subsection{Feasible degrees of component Cayley graphs}
\label{subsec:feasible-degrees}
Suppose that $f \colon GF(p)^{2m} \rightarrow GF(p)$ is an 
even function such that $f(0)=0$, where $p$ 
is a prime number greater than 2, and $m$ is a positive integer. 
In Equation (\ref{eqn:feasible-sizes})
of Section \ref{subsec:feas-size-set},
we described the feasible sizes of the 
level sets of $f$. 
If the level sets of $f$ are not of these sizes, 
$f$ cannot be bent.
We now restate these feasibility conditions 
in terms of degrees of graphs.
We say that the component Cayley graphs $\Gamma_i$
of $f$ are of {\it feasible degrees} 
if the degrees of these graphs correspond to 
the feasible sizes of level sets, i.e., 
\[
\text{degree}(\Gamma_i)=(N-1)r_i,
\] 
where 
\[
N = \pm p^m, \quad
r_i = \frac N p 
\ \text{for $1 \leq i \leq p-1$}, \quad
\text{and} \quad
r_p = \frac N p + 1.
\]  
If the degrees of the graphs $\Gamma_i$ 
are not of these sizes, the function $f$ 
cannot be bent.

\subsection{Adjacency matrices}
Let $\Gamma$ be a matrix with vertex set $V$ of size $\nu$. 
The {\it adjacency matrix} of $\Gamma$, with respect to 
a fixed ordering of the vertices,
is the $\nu \times \nu$ matrix $A$ whose 
rows and columns are indexed by the elements of $V$, such that 
\begin{equation*}
A_{xy}=
\begin{cases}
1 &\qquad \text{if $x \neq y$ and $(x,y)$ is an edge of $\Gamma$,}\\
0 &\qquad \text{otherwise.}
\end{cases}
\end{equation*}

Let $f \colon GF(p)^n \rightarrow GF(p)$ be an even function such that $f(0)=0$.
Let $A(i)$ be the adjacency matrix of the component Cayley graph $\Gamma_i$ 
with respect to the ordering of $GF(p)^n$ 
fixed in \S \ref{sec:2}.
We will show below that the matrices 
$A(1), A(2), \dots ,A(p)$ commute, 
since they share a common basis of eigenvectors.

\subsection{Hadamard vectors}
\label{subsec:Had}
Let $\nu=p^n$.
For each vector $x$ in $GF(p)^n$, we define a vector $h(x)$ in 
$\ccc^\nu$, using the same fixed ordering of $GF(p)^n$ 
as in \S \ref{sec:2}, by
\[
h(x)_y=\zeta^{-\langle x,y \rangle}.
\]
We call the vectors $h(x)$ {\it generalized Hadamard} or 
simply {\it Hadamard} vectors.
The vector $h(0)$ is the all 1's vector.
By the following lemma, the remaining vectors 
$h(x)$, where $x \neq 0$, span the 
subspace of $\ccc^\nu$ orthogonal to $h(0)$.

\begin{lemma}
\label{lem:Hadvec}
The $\nu$ Hadamard vectors $h(x)$ in $\ccc^\nu$ are 
orthogonal and linearly independent over $\ccc$. 
\end{lemma}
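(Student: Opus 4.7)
The plan is to establish orthogonality first and then deduce linear independence as an immediate consequence. Concretely, I will compute the standard Hermitian inner product $\langle h(x), h(x') \rangle = \sum_{y \in GF(p)^n} h(x)_y \, \overline{h(x')_y}$ directly from the definition, which reduces it to an exponential sum of the form
\[
\langle h(x), h(x') \rangle = \sum_{y \in GF(p)^n} \zeta^{\langle x'-x, y \rangle}.
\]
If $x = x'$, every summand is $1$ and the sum equals $\nu = p^n$, so each $h(x)$ has norm-squared $p^n$ and is in particular nonzero.

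For the off-diagonal case $x \neq x'$, set $z = x' - x \neq 0$. I would factor the sum over $GF(p)^n$ as an $n$-fold product, using that the inner product and the exponential split coordinate-wise:
\[
\sum_{y \in GF(p)^n} \zeta^{\langle z, y \rangle}
= \prod_{i=1}^{n} \sum_{y_i \in GF(p)} \zeta^{z_i y_i}.
\]
Because $z \neq 0$, at least one coordinate $z_i$ is nonzero; for that $i$, multiplication by $z_i$ permutes $GF(p)$, so the inner sum becomes $1 + \zeta + \zeta^2 + \cdots + \zeta^{p-1}$, which vanishes since $\zeta$ is a primitive $p$th root of unity. Hence the whole product is zero and $\langle h(x), h(x') \rangle = 0$.

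Finally, linear independence is a routine consequence: any orthogonal set of nonzero vectors in an inner product space is linearly independent (apply $\langle \cdot, h(x) \rangle$ to a purported linear relation to conclude that each coefficient vanishes). I do not anticipate any real obstacle here; the only step that requires a small amount of care is the coordinate-wise factorization of the character sum, and even that is standard once one observes that $\langle z, y \rangle = \sum_i z_i y_i$ converts the exponential of a sum into a product of exponentials.
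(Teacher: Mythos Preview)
Your proof is correct and is essentially the same approach as the paper's: the paper simply asserts that the matrix $H$ with columns $h(x)$ satisfies $H\overline{H}^t = \nu I$, which is exactly the orthogonality relation you compute entry by entry via the character sum $\sum_{y}\zeta^{\langle x'-x,y\rangle}$. You have merely supplied the details the paper leaves as ``straightforward.''
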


\begin{proof}
Let $H$ be the matrix whose columns are the Hadamard vectors $h(x)$, for $x$ in $GF(p)^n$.
It is straightforward to show that  
$H \overline H^t = \nu I$,
where $\nu=p^n$, and $I$ is the $\nu \times \nu$ 
identity matrix.
\hfill
\end{proof}

The matrix $H$ whose columns are the vectors $h(x)$ is sometimes 
called a {\it generalized Hadamard} or {\it Butson} matrix.

\subsection{Eigenvalues corresponding to Hadamard vectors}
\label{subsec:eig-Had-h}
Suppose that $\Gamma$ is a graph with vertex set $V$ of 
size $\nu$,
and $A$ is the adjacency matrix of $\Gamma$.
The set of eigenvalues of $A$ is called the {\it spectrum} of the 
graph $\Gamma$.
We sometimes refer to the eigenvalues of $A$ as eigenvalues of $\Gamma$.

Let $f \colon GF(p)^n \rightarrow GF(p)$ 
be a even function with $f(0)=0$, 
with component Cayley graphs 
$\Gamma_i$ and corresponding adjacency matrices $A(i)$.
We will show that the Hadamard vectors $h(x)$ of 
 Lemma \ref{lem:Hadvec}
form a basis of common eigenvectors of the matrices 
$A(i)$ over $\ccc$, and 
the values of the 
Fourier transforms $\hat {f_i}$ of the component 
functions $f_i$ are the eigenvalues 
of these matrices.

\begin{lemma}
\label{lemma:lamfi}
The Hadamard vector $h(x)$ is an 
eigenvector of $A(i)$ corresponding to the 
eigenvalue $\hat f_i(x)$, for each $x$ in $GF(p)^n$,
and for $1 \leq i \leq p$.
\end{lemma}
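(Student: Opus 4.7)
The plan is to show directly from the definitions that applying $A(i)$ to the Hadamard vector $h(x)$ scales it by $\hat{f_i}(x)$. First I would observe that for $y \neq z$ in $GF(p)^n$, the entry $A(i)_{yz}$ equals $1$ exactly when $y-z \in D_i$, which is precisely the value $f_i(y-z)$ of the indicator function. Since $0 \notin D_i$ for any $i$ (because $f(0)=0$ excludes $0$ from $D_1,\dots,D_{p-1}$, and $D_p$ is defined to exclude $0$), we also have $A(i)_{yy}=0=f_i(0)$. Hence
\begin{equation*}
A(i)_{yz} = f_i(y-z) \qquad \text{for all } y,z \in GF(p)^n.
\end{equation*}

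Next I would compute the $y$-th coordinate of $A(i)h(x)$ directly:
\begin{equation*}
\bigl(A(i)h(x)\bigr)_y
= \sum_{z \in GF(p)^n} f_i(y-z)\,\zeta^{-\langle x,z\rangle}
= \zeta^{-\langle x,y\rangle} \sum_{w \in GF(p)^n} f_i(w)\,\zeta^{\langle x,w\rangle},
\end{equation*}
after the change of variable $w=y-z$ and using bilinearity of $\langle \ ,\ \rangle$.

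The remaining step is to identify the inner sum with $\hat{f_i}(x)$. This is where evenness of $f$ is used: since $f(-w)=f(w)$, each level set $D_i$ is closed under negation, so $f_i(-w)=f_i(w)$. Substituting $w \mapsto -w$ in the inner sum therefore gives
\begin{equation*}
\sum_{w} f_i(w)\,\zeta^{\langle x,w\rangle}
= \sum_{w} f_i(-w)\,\zeta^{\langle x,w\rangle}
= \sum_{w} f_i(w)\,\zeta^{-\langle x,w\rangle}
= \hat{f_i}(x).
\end{equation*}
Combining the two displays yields $\bigl(A(i)h(x)\bigr)_y = \hat{f_i}(x)\,h(x)_y$, i.e.\ $A(i)h(x)=\hat{f_i}(x)\,h(x)$, as required.

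There is no real obstacle here beyond bookkeeping: the proof is a one-line change of variables in the convolution-type sum, together with the evenness hypothesis to fold the character sum into the Fourier transform. The mildest subtlety is remembering that evenness of $f$ transfers to each indicator $f_i$, which is what converts the naturally appearing sum $\hat{f_i}(-x)$ into $\hat{f_i}(x)$; without this, one would obtain the eigenvalue $\hat{f_i}(-x)$ instead. Since Lemma \ref{lem:Hadvec} gives a basis of Hadamard vectors, this simultaneously diagonalizes all the $A(i)$ and so confirms that they commute, as asserted in the preceding discussion.
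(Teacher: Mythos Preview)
Your proof is correct and follows essentially the same direct computation as the paper: express the matrix entries via $f_i$, change variables in the sum, and read off $\hat{f_i}(x)$. The only cosmetic difference is that the paper writes $A(i)_{yt}=f_i(t-y)$ from the outset (already using evenness of $f$), so the Fourier transform appears immediately without your separate final step of invoking $f_i(-w)=f_i(w)$.
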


\begin{proof}
The entry in position $y$ in the product $A(i)h(x)$ is 
\begin{align*}
\left( A(i)h(x) \right)_y 
&= \sum_t A(i)_{yt}h(x)_t\\
&=\sum_t f_i(t-y)\zeta^{-\langle x, t \rangle}\\
&=\sum_z f_i(z)\zeta^{-\langle x, z+y \rangle}\\
&= \left(\sum_z f_i(z) \zeta^{-\langle x, z \rangle} \right)
      \zeta^{-\langle x, y \rangle}\\
&= \hat {f_i}(x)  h(x)_y.
\end{align*}
Thus, 
\[
A(i) h(x) =  \hat{f_i}(x) h(x),
\]
i.e., the vector $h(x)$ is an 
eigenvector of $A(i)$ corresponding to the 
eigenvalue $\hat f_i(x)$.
\hfill
\end{proof}

As an immediate corollary of the previous lemma, we 
see that the adjacency matrices $A(i)$ commute.

\begin{corollary}
If $f \colon GF(p)^n \rightarrow GF(p)$ is 
an even $p$-ary function with $f(0)=0$, 
the adjacency matrices $A(1), A(2), \dots ,A(p)$ of 
the component Cayley graphs of $f$ commute.
\end{corollary}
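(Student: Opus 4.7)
The plan is to deduce commutativity as an immediate consequence of the two preceding lemmas, since together they say precisely that the matrices $A(1), \dots, A(p)$ are simultaneously diagonalizable with respect to the Hadamard basis.

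First I would invoke Lemma \ref{lem:Hadvec} to assert that the $\nu = p^n$ Hadamard vectors $\{h(x) : x \in GF(p)^n\}$ are linearly independent in $\ccc^\nu$, and therefore form a basis. Next I would invoke Lemma \ref{lemma:lamfi}, which tells us that for each $i \in \{1, \dots, p\}$ and each $x \in GF(p)^n$,
\[
A(i)\,h(x) = \hat{f_i}(x)\,h(x).
\]
Thus every $h(x)$ is a common eigenvector of all the matrices $A(1), \dots, A(p)$.

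The cleanest way to close the argument is to compute, for any $i,j \in \{1, \dots, p\}$ and any $x \in GF(p)^n$,
\[
A(i)A(j)\,h(x) = \hat{f_j}(x)\,A(i)\,h(x) = \hat{f_i}(x)\hat{f_j}(x)\,h(x),
\]
and symmetrically $A(j)A(i)\,h(x) = \hat{f_j}(x)\hat{f_i}(x)\,h(x)$. Since scalars commute, the two products $A(i)A(j)$ and $A(j)A(i)$ agree on the Hadamard basis, and hence agree as linear operators on $\ccc^\nu$. Equivalently, if $H$ is the generalized Hadamard/Butson matrix of \S\ref{subsec:Had}, then each $H^{-1}A(i)H$ is diagonal, and diagonal matrices trivially commute.

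There is really no obstacle to overcome here: once Lemmas \ref{lem:Hadvec} and \ref{lemma:lamfi} are in hand, commutativity of the $A(i)$ is a one-line consequence of simultaneous diagonalizability, and the only decision is stylistic — whether to write out the eigenvector computation or simply cite the standard fact that a family of matrices sharing a common eigenbasis must commute.
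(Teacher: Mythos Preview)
Your proposal is correct and matches the paper's approach exactly: the paper states this as ``an immediate corollary of the previous lemma,'' i.e., of Lemma~\ref{lemma:lamfi} (together with the linear independence of the Hadamard vectors from Lemma~\ref{lem:Hadvec}), which is precisely the simultaneous-diagonalizability argument you spell out.
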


We will often use the notation $\lambda_i(x)$ 
to denote the eigenvalue of $A(i)$ corresponding 
to the Hadamard eigenvector $h(x)$. Thus,
\begin{equation}
\label{eqn:lambdaiofx}
\lambda_i(x)=\hat {f_i}(x),
\end{equation}
for $1 \leq i \leq p$ and for all $x$ in $GF(p)^n$.

\section{Eigenvalue criterion for bent functions}
In this section, we characterize even bent $p$-ary 
functions in terms of eigenvalues of their component 
Cayley graphs.

\begin{proposition}
\label{propn:Walsheigenvalues}
Let $f \colon GF(p)^n \rightarrow GF(p)$ 
be an even function such that $f(0)=0$, where $p$ is is 
a prime number, and $n$ is a positive integer.
Then the Walsh transform of $f$ satisfies 
\[
W_f(x) = 1 + \sum_{i=1}^p \zeta^i \lambda_i(x),
\]
for all $x$ in $GF(p)^n$, where $\lambda_i(x)$ 
is the eigenvalue from Equation (\ref{eqn:lambdaiofx}) 
above.
\end{proposition}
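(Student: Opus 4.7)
The plan is to prove the formula by a direct partition of the Walsh sum over the level sets of $f$, then recognize each resulting piece as a Fourier transform of a component function.

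First, I would partition $GF(p)^n$ as
\[
GF(p)^n = \{0\} \cup D_1 \cup D_2 \cup \cdots \cup D_{p-1} \cup D_p,
\]
and split the defining sum
\[
W_f(x) = \sum_{y \in GF(p)^n} \zeta^{f(y) - \langle x, y\rangle}
\]
according to this decomposition. The contribution from $y=0$ is $\zeta^{f(0)}\zeta^{-\langle x,0\rangle}=1$, using the hypothesis $f(0)=0$. On each $D_i$ with $1\le i\le p-1$, the value $f(y)=i$ is constant, so $\zeta^{f(y)}=\zeta^i$ can be pulled outside the sum. On $D_p$, the value $f(y)=0$ is constant, so the factor $\zeta^{f(y)}=1$ appears, which I will rewrite as $\zeta^p$ to unify notation.

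Next, I would identify each inner sum as a Fourier transform of the appropriate indicator. By the definition of $f_i$ in Equation (\ref{eqn:defn-fi}),
\[
\hat{f_i}(x) = \sum_{y \in GF(p)^n} f_i(y)\,\zeta^{-\langle x,y\rangle} = \sum_{y \in D_i} \zeta^{-\langle x,y\rangle},
\]
for $1 \le i \le p$. Combining the two previous steps gives
\[
W_f(x) = 1 + \sum_{i=1}^{p-1} \zeta^i \hat{f_i}(x) + \zeta^p \hat{f_p}(x) = 1 + \sum_{i=1}^p \zeta^i \hat{f_i}(x).
\]
Finally, invoking Equation (\ref{eqn:lambdaiofx}), which identifies $\hat{f_i}(x)$ with the eigenvalue $\lambda_i(x)$ of $A(i)$ on the Hadamard vector $h(x)$ from Lemma \ref{lemma:lamfi}, yields the claimed identity.

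There is no real obstacle here: the argument is essentially bookkeeping. The only subtlety is the careful handling of the level set at $0$, where the vertex $0$ itself is excluded from $D_p$ and contributes separately as the leading $1$; and the cosmetic step of writing the $D_p$-sum with the factor $\zeta^p=1$ so that the index range $1\le i\le p$ is uniform. Neither the evenness of $f$ nor any property of $p$ beyond $\zeta^p=1$ is needed for this particular proposition.
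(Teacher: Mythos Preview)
Your proposal is correct and follows essentially the same approach as the paper: partition the Walsh sum over the level sets $\{0\}\cup D_1\cup\cdots\cup D_p$, factor out $\zeta^i$ on each piece, recognize the inner sums as $\hat{f_i}(x)$, and invoke $\lambda_i(x)=\hat{f_i}(x)$. Your observation that evenness of $f$ is not actually used in this computation is also accurate.
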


\begin{proof}
Recall that we defined $D_i = f^{-1}(i)$ for $1 \leq i \leq p-1$, 
and $D_p = f^{-1}(0) \setminus \{0\}$.
As above, we denote by  
$f_i$ the component function of $f$ defined in 
Equation (\ref{eqn:defn-fi}), and by $\hat{f_i}$ its 
Fourier transform.
The Walsh transform of $f$ can be 
written in terms of the Fourier transforms of 
the functions $f_i$ as
\begin{equation*}
\begin{aligned}
W_f(x) 
& =\sum_{y \in GF(p)^n} \zeta^{f(y)} \zeta^{- \langle y, x \rangle }\\
& = \zeta^0 
 + \sum_{i=1}^{p-1} \sum_{y \in D_i} \zeta^i \zeta^{-\langle y , x\rangle}
  + \sum_{y \in D_p} \zeta^{- \langle y ,x\rangle}  \\
&= 1 + \sum_{i=1}^{p-1} \zeta^i 
 \sum_{y \in GF(p)^n} f_i(y) \zeta^{-\langle y , x \rangle}
  + \sum_{y \in GF(p)^n} f_p(y) \zeta^{- \langle y , x \rangle} \\
&= 1 + \zeta \hat {f_1}(x) + \zeta^2 \hat {f_2}(x) + \cdots + \zeta^{p-1}\hat  {f_{p-1}}(x)
+ \hat {f_p}(x).
\end{aligned}
\end{equation*}
Since $\lambda_i(x)=\hat {f_i}(x)$, 
(see Equation (\ref{eqn:lambdaiofx}) above),
this completes the proof.
\hfill
\end{proof}

From the previous result,
we obtain the following characterization of an 
even bent $p$-ary function 
in terms of eigenvalues of its component Cayley graphs.

\begin{proposition}
\label{propn:eigenvaluecharacterization}
Let $f \colon GF(p)^n \rightarrow GF(p)$ 
be an even function such that $f(0)=0$, where $p$ is  
a prime number, and $n$ is a positive integer.
Then $f$ is bent if and only if 
\[
|1+\sum_{i=1}^p \zeta^i \lambda_i(x) | = p^{\frac n 2}
\]
for all $x$ in $GF(p)^n$, where $\lambda_i(x)$ 
is the eigenvalue from Equation (\ref{eqn:lambdaiofx}) 
above.
\end{proposition}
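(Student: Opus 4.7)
The plan is to observe that this proposition is an immediate corollary of the preceding Proposition \ref{propn:Walsheigenvalues}, combined with the definition of a bent function. There is essentially no new content to verify; the real work has already been done in establishing the Walsh-transform-to-eigenvalue formula.

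First I would recall the definition: by hypothesis, $f$ is bent if and only if $|W_f(x)| = p^{n/2}$ for every $x$ in $GF(p)^n$. Next I would apply Proposition \ref{propn:Walsheigenvalues} to rewrite $W_f(x)$ as $1 + \sum_{i=1}^p \zeta^i \lambda_i(x)$, using the fact (Equation (\ref{eqn:lambdaiofx})) that $\lambda_i(x) = \hat{f_i}(x)$. Substituting this expression for $W_f(x)$ into the bent condition immediately yields the stated equivalence: $f$ is bent if and only if $\left| 1 + \sum_{i=1}^p \zeta^i \lambda_i(x) \right| = p^{n/2}$ for all $x \in GF(p)^n$.

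Since the equivalence is a direct substitution into the definition, there is no real obstacle in the proof; the substantive work lies in the earlier results establishing that the Hadamard vectors $h(x)$ diagonalize every adjacency matrix $A(i)$ simultaneously (Lemma \ref{lemma:lamfi}) and that this decomposition is compatible with the Walsh transform of $f$ (Proposition \ref{propn:Walsheigenvalues}). The only point to take care with is keeping track of the indexing convention whereby $D_p$ (rather than $D_0$) corresponds to the nonzero preimage of $0$, so that the $\zeta^p = 1$ term in the sum accounts for the contribution of $\hat{f_p}(x)$, matching the constant term coming from the Fourier-transform splitting used in Proposition \ref{propn:Walsheigenvalues}.
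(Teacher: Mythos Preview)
Your proposal is correct and matches the paper's approach exactly: the paper also presents this proposition as an immediate consequence of Proposition~\ref{propn:Walsheigenvalues} and the definition of bentness, with no additional argument supplied. Your remarks about the indexing convention for $D_p$ and the role of Lemma~\ref{lemma:lamfi} are accurate context but not needed for the proof itself.
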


\section{Feasible Latin and negative Latin square type functions}
In this section, we consider even $p$-ary 
functions whose component Cayley graphs are 
strongly regular and  
either all of Latin square type with 
feasible degrees, 
or all of negative Latin square type with 
feasible degrees.
We describe the eigenvalues 
of the component Cayley graphs of such functions.
In order to illustrate how our main result is related 
to the theorems of Dillon and 
Bernasconi, Codenotti, and VanderKam, 
we recast their theorems in this context.
We begin with some background material 
on strongly regular graphs and graphs of 
Latin and negative Latin square type 
(for further details 
see, for example, Godsil and Royle \cite[Chapter~10]{GR01}). 

\subsection{Strongly regular graphs}
Let $\Gamma$ be a $k$-regular graph on $\nu$ vertices 
(every vertex has degree $k$). 
The graph $\Gamma$ is called 
{\it strongly regular} if 
there exist nonnegative integers $\lambda$ and $\mu$ such that 
if $x$ and $y$ are neighbors in $\Gamma$, 
there are $\lambda$ common neighbors of $x$ and $y$, and
if $x$ and $y$ are not neighbors in $\Gamma$, there are $\mu$ 
common neighbors of $x$ and $y$.
The constants $(\nu , k , \lambda , \mu)$ are called the
{\it parameters} of the graph $\Gamma$.

A strongly regular graph on the vertex set $GF(p)^n$
with parameters $(v, k, \lambda, \mu)$ 
corresponds to a symmetric partial difference set with parameters $(v, k, \lambda, \mu)$ 
(see, for example, \cite[Chapter 6]{JM17}). 
Thus, many of our statements 
about strongly regular graphs 
could be rephrased in terms 
of symmetric partial difference sets.

\subsection{Eigenvalues of strongly regular graphs}
The eigenvalues of the adjacency matrix of 
a strongly regular graph and their multiplicities can 
be expressed in terms of the parameters of the graph 
by the following well-known formulas.
A strongly regular graph $\Gamma$ 
with parameters $(\nu,k,\lambda,\mu)$
has eigenvalues $k$, $\theta$, and $\tau$, where 
the eigenvector $k$ corresponds to the all $1$'s vector,
\[
\theta = \frac { \lambda - \mu + \sqrt 
{(\lambda - \mu)^2 + 4 (k - \mu)}} 2,
\]
and 
\[
\tau = \frac { \lambda - \mu - \sqrt 
{(\lambda - \mu)^2 + 4 (k - \mu)}} 2.
\]
From these equations we see that 
\begin{equation}
\label{eqn:thetatau0}
\theta +\tau = \lambda - \mu \qquad \text{and}
\qquad \theta \tau = -k + \mu.
\end{equation}
The multiplicities of $\theta$ and $\tau$ on the space of 
vectors in $\ccc^\nu$ orthogonal to the all 1's vector are
given by 
\[
m_\theta = \frac {(\nu - 1) \tau + k}{\tau - \theta}
\qquad \text{and} \qquad 
m_\tau = \frac {(\nu - 1)\theta + k}{\theta - \tau}.
\]

\subsection{Latin and negative Latin square type graphs}
\label{subsec:LSTNLST}
We say that a strongly regular graph $\Gamma$ is of {\it Latin
square type} if there exist integers $N>0$ and $r>0$ such that 
the parameters of $\Gamma$ are 
\begin{equation}
\label{eqn:LSTdefn}
(\nu,k,\lambda, \mu)=(N^2, (N-1)r, N+r^2-3r, r^2-r).
\end{equation}
A strongly regular graph $\Gamma$ is of {\it negative Latin
square type} if there exist integers $N<0$ and $r<0$ such that 
the parameters of $\Gamma$ are given by Equation (\ref{eqn:LSTdefn}).

If $\Gamma$ is a strongly regular graph of Latin square type 
then the eigenvalues of $\Gamma$ are  
\begin{equation}
\label{eqn:thetatau1}
k=(N-1)r, \qquad
\theta= N - r, \qquad \text{and} \qquad \tau= -r,
\end{equation}
where $\theta$ has multiplicity $m_\theta=(N-1)r$ 
on the subspace of $\ccc^\nu$ orthogonal to the all $1$'s vector,
and $\tau$ has multiplicity $m_\tau=(N-1)(N-r+1)$.

If $\Gamma$ is a strongly regular graph of 
negative Latin square type, then the eigenvalues of $\Gamma$ are
\begin{equation}
\label{eqn:thetatau2}
k=(N-1)r, \qquad 
\theta= - r, \qquad \text{and} \qquad \tau= N-r,
\end{equation}
where $\theta$ has multiplicity $m_\theta=(N-1)(N-r+1)$ 
on the subspace of $\ccc^\nu$ orthogonal to the all $1$'s vector,
and $\tau$ has multiplicity $m_\tau=(N-1)r$.

If $N= \pm p^m$ and $r= \frac N p$, the eigenvalues 
$k$, $\theta$, and $\tau$ are distinct, except in the 
case that $m=1$ and $N = p$. In this case $r=1$, 
so $\theta = k = p-1$, $\lambda=p-2$, $\mu=0$, and $\tau=-1$.
In this case, when there are only two distinct 
eigenvalues, the graph $\Gamma$ is not connected, 
and consists of $p$ copies of the complete graph $K_p$.

\subsection{Latin and negative Latin square type functions}
\label{defn:fLSTNLST}
Let $f \colon GF(p)^n \rightarrow GF(p)$ be 
a $p$-ary function, where $p$ is prime number,
and $n$ is a positive integer. 
We say that $f$ is of 
{\it Latin square type} if 
$f$ is even with $f(0)=0$, and 
its component Cayley graphs 
are all strongly regular and of 
Latin square type.
Similarly, we say that $f$ is of 
{\it negative Latin square type} if 
$f$ is even with $f(0)=0$, and 
its component Cayley graphs 
are all strongly regular and of 
negative Latin square type.

We will sometimes use the abbreviations LST and NLST 
for Latin square type and negative Latin square type, 
respectively.

\subsection{Feasible LST and NLST functions}
\label{subsec:LSTFdefn}
Let $f \colon GF(p)^{2m} \rightarrow GF(p)$ be 
a $p$-ary function, where $p$ is a prime number, 
and $m$ is a positive integer. 
We say that $f$ is of 
{\it feasible Latin square type} if 
it is of Latin square type 
and the parameters of the component 
Cayley graph $\Gamma_i$ are 
\begin{equation}
\label{eqn:LSTdefn2a}
(\nu,k,\lambda, \mu)=(N^2, (N-1)r_i, N+r_i^2-3r_i, r_i^2-r_i),
\end{equation}
where $N = p^m$, 
$r_i = \frac N p$ 
for $1 \leq i \leq p-1$, 
and $r_p = \frac N p + 1$. 
Similarly, we say that $f$ is of 
{\it feasible negative Latin square type} if 
it is of negative Latin square type and 
the parameters of the component Cayley graphs 
are given by 
Equation (\ref{eqn:LSTdefn2a}), 
where $N = -p^m$, 
$r_i = \frac N p$ 
for $1 \leq i \leq p-1$, 
and $r_p = \frac N p + 1$. 
%In these cases, we also say that 
%the component Cayley graphs are 
%feasible.

\begin{remark}
\label{rk:no_NLST_m1p5}
{\rm
If $m=1$, there are no functions of feasible 
negative Latin square type for $p \geq 5$. 
If there were such a function, the 
formula above would give a value of 
$\lambda = -p+4$, 
which is impossible since $\lambda \geq 0$.
}
\end{remark}

\subsection{Eigenvalues of feasible LST and NLST graphs}
From the formulas of the 
previous two sections, we can calculate the 
eigenvalues of the component Cayley graphs of $f$ 
and their multiplicities, in the case that $f$ 
is of feasible Latin or negative Latin square type.
In this section, we capture a more subtle feature of 
how these eigenvalues interact, 
which is key to proving our main result and 
the subsequent duality theorem. 

Let $f \colon GF(p)^{2m} \rightarrow GF(p)$ be an 
even function such that $f(0)=0$, where $p$ is a prime 
number greater than 2,
and $m$ is a positive integer. 
Let $A(i)$ be the adjacency matrix of 
the component Cayley graph $\Gamma_i$.
Recall that we denote by $\lambda_i(x)$ the eigenvalue 
of $A(i)$ corresponding to the Hadamard vector 
$h(x)$ defined in \S \ref{subsec:Had}.

\begin{proposition}
\label{propn:distinguished-eigenvalue}
Let $f \colon GF(p)^{2m} \rightarrow GF(p)$ be a 
$p$-ary function, where $p$ is a prime number 
greater than 2,
and $m$ is a positive integer. 
Suppose that $f$ is of feasible Latin 
or negative Latin square type. 
Then for each nonzero $x$ in $GF(p)^{2m}$, there 
exists a unique distinguished index $j$ in $\{1, 2, \dots , p-1,p\}$ 
such that 
\[
\lambda_j(x) = N - r_j,
\]
while for all the remaining values of $i$ 
such that $1 \leq i \leq p$ and $i \neq j$, 
\[
\lambda_i(x) = - r_i.
\]
\end{proposition}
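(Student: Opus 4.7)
The plan is to reduce the statement to a single linear identity on the eigenvalues, which is then forced to pick out exactly one index. First, since the component Cayley graphs $\Gamma_1,\ldots,\Gamma_p$ partition the edges of the complete graph on $GF(p)^{2m}$, their adjacency matrices satisfy
\begin{equation*}
A(1) + A(2) + \cdots + A(p) = J - I,
\end{equation*}
where $J$ is the all-ones matrix and $I$ is the identity. For any nonzero $x$ in $GF(p)^{2m}$, Lemma \ref{lem:Hadvec} shows that $h(x)$ is orthogonal to $h(0)$, hence $J h(x) = 0$. Applying the identity to $h(x)$ and invoking Lemma \ref{lemma:lamfi}, I obtain
\begin{equation*}
\sum_{i=1}^{p} \lambda_i(x) = -1.
\end{equation*}

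Next I would restrict the possible values of each $\lambda_i(x)$. Because $\Gamma_i$ is strongly regular of feasible Latin square or negative Latin square type, Equations (\ref{eqn:thetatau1}) and (\ref{eqn:thetatau2}) tell me that, on the orthogonal complement of $h(0)$, the spectrum of $A(i)$ consists of the two values $N - r_i$ and $-r_i$ (in both the LST and the NLST case). Therefore $\lambda_i(x) \in \{N - r_i,\, -r_i\}$ for every $i$.

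Let $S = \{\, i : \lambda_i(x) = N - r_i \,\}$. Substituting this dichotomy into the sum identity gives
\begin{equation*}
|S|\, N - \sum_{i=1}^{p} r_i = -1.
\end{equation*}
Using the feasibility conditions of \S \ref{subsec:LSTFdefn}, a direct calculation shows
\begin{equation*}
\sum_{i=1}^{p} r_i = (p-1)\tfrac{N}{p} + \bigl(\tfrac{N}{p} + 1\bigr) = N + 1,
\end{equation*}
so $|S|\, N = N$, and since $N = \pm p^m \neq 0$, this forces $|S| = 1$. This yields existence and uniqueness of the distinguished index $j$ simultaneously.

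I do not anticipate a major obstacle; the feasibility parameters $r_i$ appear to have been chosen precisely so that the single linear constraint $\sum_i \lambda_i(x) = -1$ cuts out exactly one index. The only point that requires care is the degenerate case $m = 1$, $N = p$ flagged in \S \ref{subsec:LSTNLST}, where for $i < p$ the non-trivial eigenvalue $N - r_i$ coincides with the valency $k_i$ and the graph $\Gamma_i$ is disconnected. In that situation the dichotomy $\lambda_i(x) \in \{N - r_i,\, -r_i\}$ on $h(0)^\perp$ still holds (the eigenvalue $N - r_i$ merely acquires higher multiplicity on the full space, part of which lies in $h(0)^\perp$), so the counting argument goes through unchanged.
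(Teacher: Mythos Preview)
Your proof is correct and follows essentially the same approach as the paper: derive $\sum_i \lambda_i(x) = -1$ from the adjacency-matrix identity $I + \sum_i A(i) = J$ applied to $h(x)$, restrict each $\lambda_i(x)$ to the two-element set $\{N - r_i, -r_i\}$, and solve the resulting linear constraint. Your packaging via $|S|$ and the direct evaluation $\sum_i r_i = N+1$ is a slightly cleaner version of the paper's splitting into counts $a$ and $b$, but the argument is the same.
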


\begin{proof}
Let $\nu=p^{2m}$, 
let $I$ be the $\nu \times \nu$ identity matrix,
and let $J$ be the $\nu \times \nu$ all 1's matrix.
Recall that for $x$ in $GF(p)^n$, the Hadamard vectors $h(x)$ are orthogonal 
vectors in $\ccc^\nu$, 
and $h(0)$ is the all 1's vector.
Thus, if $x$ is a nonzero point in $GF(p)^n$, 
then $Jh(x)=0$.
The adjacency matrices $A(i)$ of the component graphs $\Gamma_i$ of $f$ 
satisfy
\[
I + \sum_{i=1}^p A(i)=J.
\]
Multiplying on the right by $h(x)$, where $x \neq 0$, gives
\begin{equation*}
h(x)+\sum_{i=1}^p A(i) h(x)
=\left(1 +\sum_{i=1}^p \lambda_i(x)\right)h(x)
=0,
\end{equation*}
It follows that if $x \neq 0$,
\begin{equation}
\label{eqn:sumlambda}
1+\sum_{i=1}^p \lambda_i(x) = 0.
\end{equation}

Since the component graphs $\Gamma_i$  
are of Latin or negative Latin square type 
with $N=\pm p^m$,  
$r_i = \frac N p$ for $1 \leq i \leq p-1$, and 
$r_p=\frac N p +1$, 
the eigenvalues of $\Gamma_i$ 
are $k_i=(N-1)r_i$, $N-r_i$, and $-r_i$.
When $x \neq 0$, the 
eigenvalue $\lambda_i(x)$ must take the value $N-r_i$ or $-r_i$, 
for $1 \leq i \leq p$.

Let $a$ be the number of the eigenvalues in the 
set $\{ \lambda_i(x) \ | \ 
1 \leq i \leq p-1 \}$ which take the value $N - r_i$.
Similarly, let $b=1$ if $\lambda_p(x) = N - r_p$,
and let $b=0$ otherwise. We wish to show that one of the numbers 
$a$ and $b$ is 1 and the other is 0.
Let $r$ be the common value of $r_i$ for $1 \leq i \leq p-1$, \
i.e., $r = \frac N p$. Then $r_p = r+1$.
Substituting into Equation (\ref{eqn:sumlambda}) we obtain
\begin{align*}
0
&=
1+a( N - r) + (p-1-a)(-r)
+ b (N-r_p) + (1-b)(-r_p)\\
&= 1+a( N - r) + (p-1-a)(-r)
+ b (N-r-1) + (1-b)(-r-1)\\
&=1+ (a+b)N + p(-r) -1\\
&= (a+b)N-N.
\end{align*}
Thus, $a+b=1$, so there is exactly one index $j$, 
with $1 \leq j \leq p$, 
such that $\lambda_j(x)=N-r_j$.
For the remaining values of $i \neq j$, $\lambda_i(x)=-r_i$.
\hfill
\end{proof}

\subsection{Dillon and Bernasconi--Codenotti--VanderKam\\ theorems}
Suppose that $f \colon GF(2)^{2m} \to GF(2)$
is a Boolean function such that $f(0)=0$, 
where $m$ is a positive integer.
We say that the Cayley graph $\Gamma$ of $f$ is
the component Cayley graph $\Gamma_1$.
Recall from \S \ref{subsec:bentBool} 
that if $f$ is bent, then 
the only possible sizes 
for the set $D_1=f^{-1}(1)$  
(and hence the only possible degrees of
the Cayley graph $\Gamma$) are 
$|D_1| = 2^{2m-1} \pm 2^{m-1}$.
These are the feasible degrees of the 
Cayley graph of a Boolean function of 
$2m$ variables that vanishes at 0.
Dillon's criterion \cite{D74} for 
bent Boolean functions was 
stated in the language of 
difference sets.
We state an essentially equivalent 
version in terms 
of strongly regular graphs.

\begin{theorem}[Dillon]
Let $f \colon GF(2)^{2m} \to GF(2)$ be a function with $f(0)=0$. 
Then $f$ is bent if and only if its 
Cayley graph 
is strongly regular 
of feasible Latin or negative Latin square type.
\end{theorem}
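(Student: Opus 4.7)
The plan is to specialize the eigenvalue criterion of Proposition \ref{propn:eigenvaluecharacterization} to $p=2$ and translate it into a condition on $\Gamma=\Gamma_1$ alone. Since $\zeta=-1$ and the two component graphs $\Gamma_1,\Gamma_2$ are complementary (so $A(1)+A(2)=J-I$), for every nonzero $x$ we have $\lambda_1(x)+\lambda_2(x)=-1$, and the Walsh formula of Proposition \ref{propn:Walsheigenvalues} collapses to $W_f(x)=-2\lambda_1(x)$. Thus $f$ is bent precisely when every eigenvalue of $A(1)$ on the subspace orthogonal to the all-ones vector has absolute value $2^{m-1}$, together with the condition $|W_f(0)|=2^m$ coming from the degrees of the two component graphs.

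For the backward implication, assume $\Gamma$ is strongly regular of feasible Latin or negative Latin square type, so $N=\pm 2^m$ and $r_1=N/2$. Proposition \ref{propn:distinguished-eigenvalue} yields $\lambda_1(x)\in\{N-r_1,-r_1\}=\{\pm 2^{m-1}\}$ for each nonzero $x$, so $|W_f(x)|=2^m$. A short computation with the feasible degrees $k_1=(N-1)N/2$ and $k_2=(N-1)(N/2+1)$ gives $|1-k_1+k_2|=2^m$, and we conclude that $f$ is bent via Proposition \ref{propn:eigenvaluecharacterization}.

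For the forward implication, suppose $f$ is bent. The identity $W_f(x)=-2\lambda_1(x)$ forces $\lambda_1(x)\in\{\pm 2^{m-1}\}$ for all nonzero $x$, so $A(1)$ has at most three distinct eigenvalues, namely $k_1$, $2^{m-1}$, and $-2^{m-1}$. On the orthogonal complement of the all-ones vector, the minimal polynomial of $A(1)$ therefore divides $x^2-2^{2m-2}$, which yields a matrix identity of the shape $A(1)^2=2^{2m-2}I+\alpha J$ for a scalar $\alpha$ determined by $k_1$. Matching this against the identity $A^2=kI+\lambda A+\mu(J-I-A)$ that any strongly regular $k$-regular graph satisfies forces the coefficient of $A(1)$ to vanish, so $\lambda=\mu$, and comparing diagonal and off-diagonal entries pins down the common value in terms of $k_1$ and $m$; hence $\Gamma$ is strongly regular. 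The feasibility result $|D_1|=2^{2m-1}\pm 2^{m-1}$ from \S\ref{subsec:bentBool} then identifies the parameters with the Latin square type family (if $|D_1|=2^{2m-1}-2^{m-1}$) or the negative Latin square type family (if $|D_1|=2^{2m-1}+2^{m-1}$).

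The main obstacle is the forward direction: one cannot simply invoke the folklore fact that a regular graph with three eigenvalues is strongly regular, because we must both pin down the LST/NLST parameters and handle the degenerate possibility that only one of $\pm 2^{m-1}$ appears, which would correspond to $\Gamma$ being disconnected. A trace computation from $\operatorname{tr} A(1)=0$ together with the formula $k_1=2^{m-1}(2^m\mp 1)$ shows that both multiplicities are strictly positive for $m\geq 1$, so three eigenvalues genuinely appear; the identity $A(1)^2=2^{2m-2}I+\alpha J$ then produces the parameters cleanly and in closed form, bypassing any black-box appeal.
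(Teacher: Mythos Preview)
The paper does not prove this theorem; it is stated in \S5.7 as a known result attributed to Dillon \cite{D74} (and in the Bernasconi--Codenotti--VanderKam form in the paragraph that follows). So there is no paper proof to compare against, and your proposal supplies an argument the paper never gives.

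Your argument is essentially correct and fits naturally into the paper's framework. The reduction $W_f(x)=-2\lambda_1(x)$ for $x\neq 0$ is exactly the $p=2$ specialization of Proposition~\ref{propn:Walsheigenvalues}, and the forward direction via the matrix identity $A(1)^2=2^{2m-2}I+\alpha J$ is clean: reading off the $(x,y)$ entry for $x\neq y$ immediately gives that every pair of distinct vertices has the same number $\alpha$ of common neighbors, so $\lambda=\mu=\alpha$ and $\Gamma$ is strongly regular. The trace argument ruling out the one-eigenvalue degeneracy is also fine. Two small points to tidy up. First, Proposition~\ref{propn:distinguished-eigenvalue} is stated only for $p>2$, so you should not cite it; the only thing you need is that the nontrivial eigenvalues of a feasible LST/NLST graph are $N-r_1$ and $-r_1$, which is Equations~(\ref{eqn:thetatau1})--(\ref{eqn:thetatau2}) in \S\ref{subsec:LSTNLST} and requires no assumption on $p$. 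Second, in the last step you should actually verify that the resulting parameters $(\nu,k,\lambda,\mu)=(2^{2m},\,2^{2m-1}\mp 2^{m-1},\,\lambda,\,\lambda)$ match the LST/NLST template of \S\ref{subsec:LSTFdefn}; this amounts to checking $N+r_1^2-3r_1=r_1^2-r_1$, i.e.\ $N=2r_1$, which holds precisely because $r_1=N/2$ in the feasible Boolean case.
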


Bernasconi, Codenotti, and VanderKam \cite{BCV01} proved that 
a function $f \colon GF(2)^{2m} \to GF(2)$ with $f(0)=0$ 
is bent if and only if the Cayley graph $\Gamma$ 
of $f$ is 
strongly regular with parameters $(2^{2m}, k, \lambda, \lambda)$, 
for some $\lambda$, where $k = |D_1|$.
From the discussion above, we see that 
$k = 2^{2m-1} \pm 2^{m-1}$ and 
$\lambda=2^{2m-2} \pm 2^{m-1}$.

In the next section, we show  
that the theorems of Dillon and
Bernasconi, Codenotti, and VanderKam can be generalized 
in one direction (Theorem \ref{thm:main1}). 
In \S \ref{subsec:GF52}, 
we give 
examples 
to show that converse of 
Theorem \ref{thm:main1} does not hold, 
since the component Cayley graphs of a 
$p$-ary bent function are not 
necessarily strongly regular.

\section{Feasible Latin and negative Latin square type functions are bent}
We now prove our main result.

\begin{theorem}
\label{thm:main1}
Let $f \colon GF(p)^{2m} \rightarrow GF(p)$ be an 
even function such that $f(0)=0$, where $p$ is a 
prime number greater than 2,
and $m$ is a positive integer. 
If the component Cayley graphs of $f$ 
are all strongly regular and  
are either all of feasible Latin square type, 
or all of feasible negative Latin square type,
then $f$ is bent.
\end{theorem}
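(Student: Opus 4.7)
The plan is to invoke the eigenvalue criterion of Proposition \ref{propn:eigenvaluecharacterization}: it suffices to show that
\[
|W_f(x)| = \Bigl| 1 + \sum_{i=1}^p \zeta^i \lambda_i(x) \Bigr| = p^m
\]
for every $x \in GF(p)^{2m}$. The feasibility hypothesis fixes $N = \pm p^m$, $r_i = N/p$ for $1 \le i \le p-1$, and $r_p = N/p + 1$, while the LST/NLST hypothesis forces the eigenvalues of each $\Gamma_i$ to lie in $\{(N-1)r_i,\ N-r_i,\ -r_i\}$. The strategy is to split into the cases $x = 0$ and $x \neq 0$ and evaluate the Walsh sum in closed form in each case, using only the identity $\sum_{i=1}^{p-1}\zeta^i = -1$ and arithmetic with $r_i$.

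For $x = 0$, the Hadamard vector $h(0)$ is the all-ones vector, so $\lambda_i(0)$ is just the degree $k_i = (N-1)r_i$ of $\Gamma_i$. Substituting, setting $r = N/p$, and using $r_p = r+1$ together with $\sum_{i=1}^{p-1}\zeta^i = -1$, a short computation collapses the sum to $W_f(0) = N$, which has absolute value $p^m$. For $x \neq 0$, the decisive tool is Proposition \ref{propn:distinguished-eigenvalue}: there is a unique distinguished index $j$ with $\lambda_j(x) = N - r_j$ while $\lambda_i(x) = -r_i$ for the remaining indices. Substituting into the Walsh-sum formula gives
\[
W_f(x) = 1 + \zeta^j N - \sum_{i=1}^{p} \zeta^i r_i,
\]
and the trailing sum evaluates to $r\sum_{i=1}^{p-1}\zeta^i + r_p = -r + (r+1) = 1$. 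Hence $W_f(x) = \zeta^j N$, again of magnitude $p^m$.

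The serious combinatorial content has already been packaged into Proposition \ref{propn:distinguished-eigenvalue}, so the main potential obstacle for the proof of the theorem itself is cleanly bookkeeping the two feasibility shapes; however, because $N$ encodes the sign (positive for LST, negative for NLST) and the eigenvalue formula $(N-1)r_i$, $N-r_i$, $-r_i$ is identical in the two regimes, a single uniform computation covers both cases without case analysis. Thus the proof is essentially a two-line verification once the eigenvalue criterion and the distinguished-eigenvalue proposition are in place, and the main work that remains is to record the two short calculations above.
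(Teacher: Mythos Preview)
Your proposal is correct and follows essentially the same approach as the paper: both split into $x=0$ and $x\neq 0$, invoke Proposition~\ref{propn:distinguished-eigenvalue} for the nonzero case, and reduce the Walsh sum via $\sum_{i=1}^{p-1}\zeta^i=-1$ to obtain $W_f(0)=N$ and $W_f(x)=\zeta^j N$. The only cosmetic difference is that the paper computes $W_f(0)$ directly from the definition of the Walsh transform (counting level-set sizes) rather than through $\lambda_i(0)=k_i$, but the arithmetic is identical.
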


\begin{proof}
By the hypotheses of the theorem, 
the component Cayley graphs $\Gamma_i$ of $f$ are 
strongly regular with parameters 
\[
\left(N^2,(N-1)r_i,N+r_i^2-3r_i,r_i^2-r_i \right),
\]
where $N=p^m$ if the graphs are all of feasible 
Latin square type, 
$N = - p^m$ if the graphs are all of 
feasible negative Latin square type, 
$r_i = \frac N p$, for $1 \leq i \leq p-1$, and $r_p=\frac N p+1$.

In order to show that $f$ is bent, we wish to show 
that the magnitude of the Walsh transform $W_f(x)$ 
is $|N| = p^m $, for each $x$ in $GF(p)^{2m}$.

The Walsh transform at $x=0$ is easily calculated, 
by counting the number of times $f$ takes each 
value $i$, as 
\begin{align*}
W_f(0)&= \sum_{y \in GF(p)^{2m}} \zeta^{f(y)}\\
&=1+ \sum_{i=1}^{p-1} \zeta^i (N-1)r_i+(N-1)r_p\\
&=1 +(N-1)\left(\frac N p\right) \sum_{i=1}^{p-1} \zeta^i
   + (N-1) \left(\frac N p +1\right)\\
&=1-(N-1)\left(\frac N p\right)
     + (N-1) \left(\frac N p +1\right)\\
&=N.
\end{align*}
Thus, $W_f(0)= \pm p^m$.

We will now apply the characterization of bent functions in 
terms of eigenvalues (Proposition \ref{propn:eigenvaluecharacterization})
to show that $|W_f(x)|=p^m $ for 
all nonzero $x$ in $GF(p)^{2m}$.
Recall from Proposition \ref{propn:distinguished-eigenvalue} 
that for each nonzero $x$ in $GF(p)^{2m}$, there 
exists a unique distinguished value of 
$j$ in $\{1, 2, \dots , p-1,p\}$ 
such that 
$\lambda_j(x) = N - r_j$,
while for all the remaining values of $i$ 
such that $1 \leq i \leq p$ and $i \neq j$, 
we have
$\lambda_i(x) = - r_i$.
Continuing with this notation 
and using Proposition \ref{propn:Walsheigenvalues}, we find that 
\begin{align*}
W_f(x) &= 1 + \sum_{i=1}^p \zeta^i \lambda_i(x)\\
&= 1 + \zeta^j (N - r_j) + \sum_{i \neq j} \zeta^i (-r_i)\\
& = 1 + \zeta^j N + \sum_{i=1}^p \zeta^i (-r_i)\\
&= 1 + \zeta^j N 
    - \left(\frac N p\right)\sum_{i=1}^{p-1} \zeta^i 
          - \left( \frac N p +1\right)\\
&=1+\zeta^j N +\left(\frac N p\right)  - \left( \frac N p +1\right)\\ 
&=\zeta^j N.        
\end{align*}
Therefore $f$ is bent.
\hfill
\end{proof}

As an immediate consequence of the proof of the theorem above, 
we obtain the following corollary.

\begin{corollary}
\label{cor:Wfzetaj}
Let $f \colon GF(p)^{2m} \rightarrow GF(p)$ be a 
$p$-ary function, where $p$ is a prime number greater than 2,
and $m$ is a positive integer. 
Suppose that $f$ is of feasible Latin 
or negative Latin square type.
Then $W_f(0) = p^m$ in the feasible Latin square type case, 
and $W_f(0)=-p^m$ in the feasible negative Latin square type case.
Furthermore, if $x \neq 0$, then
\[
W_f(x) = \zeta^j W_f(0),
\]
where $j$ is the distinguished index 
described in Proposition \ref{propn:distinguished-eigenvalue}
such that $\lambda_j(x)$ has the form $N - r_j$.
\end{corollary}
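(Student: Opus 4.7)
The plan is to read off both assertions directly from intermediate formulas inside the proof of Theorem \ref{thm:main1}, with essentially no new computation. The key observation is that the proof of that theorem did more than verify $|W_f(x)|=p^m$; it computed the precise complex value of $W_f(x)$ at every point, and the corollary is just a repackaging of those values.

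First I would handle the evaluation at the origin. In the proof of Theorem \ref{thm:main1}, the direct computation
\[
W_f(0) = 1 + \sum_{i=1}^{p-1}\zeta^i (N-1)r_i + (N-1)r_p
\]
was carried out using $r_i = N/p$ for $1\le i\le p-1$ and $r_p = N/p + 1$, together with $\sum_{i=1}^{p-1}\zeta^i = -1$, and it collapsed to $W_f(0) = N$. Since $N = p^m$ in the feasible Latin square type case and $N = -p^m$ in the feasible negative Latin square type case (by the definitions in \S \ref{subsec:LSTFdefn}), this immediately yields the claimed signs of $W_f(0)$.

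Next I would handle $x \neq 0$. Proposition \ref{propn:distinguished-eigenvalue} supplied a unique distinguished index $j$ with $\lambda_j(x) = N - r_j$ and $\lambda_i(x) = -r_i$ for $i \neq j$. Substituting these eigenvalues into the formula of Proposition \ref{propn:Walsheigenvalues} and again invoking $\sum_{i=1}^{p-1}\zeta^i = -1$, the proof of Theorem \ref{thm:main1} simplified $W_f(x)$ to $\zeta^j N$. Combining with the previous paragraph gives $W_f(x) = \zeta^j N = \zeta^j W_f(0)$, which is exactly the last assertion of the corollary.

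There is essentially no obstacle here: the corollary is a bookkeeping consequence of the explicit computation that already appears in the proof of Theorem \ref{thm:main1}. The only thing worth flagging in the write-up is that one should point out that the sign of $N$ is determined by which square-type hypothesis is in force, so the formula $W_f(0) = N$ specializes to the two stated cases without any additional argument.
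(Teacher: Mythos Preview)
Your proposal is correct and follows exactly the paper's approach: the paper states the corollary as ``an immediate consequence of the proof of the theorem above,'' and your argument simply records that the proof of Theorem \ref{thm:main1} already computed $W_f(0)=N$ and $W_f(x)=\zeta^j N$ for $x\neq 0$, with $N=\pm p^m$ according to the square-type hypothesis.
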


This corollary gives us a dual $p$-ary function 
$f^*$, satisfying 
\[
W_f(x) = \zeta^{f^*(x)} W_f(0).
\]
The properties of $f^*$ are 
described in the next section.

\section{Dual functions}
In this section, we prove that a bent function $f$ 
of feasible Latin or negative Latin square type 
(as defined in \S \ref{subsec:LSTFdefn}) has a
dual $f^*$ whose component Cayley graphs 
have the same parameters as those of $f$.
The main idea of the proof is to relate the component functions 
$f_i^*$ of $f^*$ to the component functions 
$f_i$ of $f$ by means of the equation
\begin{equation}
\label{eqn:fi*}
f_i^*(x)=\frac 1 N \hat {f_i}(x) + \frac {r_i} N 
- r_i \delta_0(x),
\end{equation}
where $\hat {f_i}$ is the Fourier transform of $f_i$, and 
$\delta_0$ is the delta function centered at 0
(see Equation (\ref{defn:delta})).
Since the eigenvalues of the component Cayley graphs 
of a $p$-ary function are 
given by the Fourier transforms of the component functions, 
Equation (\ref{eqn:fi*}) allows us to calculate the eigenvalues of the 
component Cayley graphs of $f^*$ and show that these 
graphs are strongly regular with the 
desired parameters.

\subsection{Regular and weakly regular bent functions}
A bent function $f \colon GF(p)^n \to GF(p)$ is said to 
be {\it regular} if there exists a {\it dual function}
$f^* \colon GF(p)^n \rightarrow GF(p)$ 
such that
\[
W_f(x) =\zeta^{f^*(x)}  p^{\frac n 2}  
\]
for all $x$ in $GF(p)^n$.
Similarly, a bent function $f \colon GF(p)^n \to GF(p)$ is said to 
be {\it weakly regular} or {\it $\mu$-weakly regular}
if there exists a constant $\mu$ in $\ccc$ with magnitude 1
and a {\it dual function}
or {\it $\mu$-weakly regular dual function}
$f^* \colon GF(p)^n \rightarrow GF(p)$ 
such that
\[
W_f(x) = \mu \zeta^{f^*(x)}  p^{\frac n 2}
\]
for all $x$ in $GF(p)^n$.
It is known that the dual $f^*$ 
of a regular or weakly regular bent function $f$
is also bent. If $f$ is regular, so is $f^*$, and if $f$ is weakly regular, 
so is $f^*$.  
If $f$ is an even function, then $f^*$ 
is also even.
See \cite[\S 6.4]{JM17} for 
further background on duality.

\subsection{Regularity and feasible LST and NLST functions}
We show that a function
$f \colon GF(p)^{2m} \rightarrow GF(p)$ 
of feasible Latin or negative Latin square type 
(as defined in \S \ref{subsec:LSTFdefn})
is regular or weakly regular, respectively.

Recall that in the feasible Latin or 
negative Latin square case, the
parameters of the component 
Cayley graph $\Gamma_i$ 
are 
\begin{equation*}
(\nu,k,\lambda, \mu)=(N^2, (N-1)r_i, N+r_i^2-3r_i, r_i^2-r_i),
\end{equation*}
where 
\begin{equation}
\label{eqn:LSTdefn2}
N = \pm p^m, \quad
r_i = \frac N p \ \text{for $1 \leq i \leq p-1$}, 
\quad \text{and} \quad r_p = \frac N p + 1.
\end{equation} 
When $N=p^m$, the graph $\Gamma_i$ is 
of Latin square type, and when 
$N=-p^m$, it is of 
negative Latin square type.

Note that 
in the case $p=3$ and $m=1$, 
it is possible to have 
a strongly regular graph decomposition 
of $GF(p)^{2m}$ that is both 
Latin and negative Latin square type
 (see Example \ref{ex:negpos}),
but only the negative Latin square type 
graph decomposition is feasible.

As above, we denote 
the eigenvalue of $\Gamma_i$ corresponding 
to the Hadamard eigenvector $h(x)$
by $\lambda_i(x)$ 
(see Sections \ref{subsec:Had} and 
\ref{subsec:eig-Had-h}). 

\begin{proposition}
\label{propn:f^*defn}
Let $f \colon GF(p)^{2m} \rightarrow GF(p)$ be a 
$p$-ary function, where $p$ is a prime number greater than 2,
and $m$ is a positive integer.
If $f$ is of feasible Latin square type,
then $f$ is a regular bent function.
If $f$ is of feasible negative Latin square type,
then $f$ is a $(-1)$-weakly regular bent function.
\end{proposition}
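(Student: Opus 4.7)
The plan is to read off the dual function directly from Corollary \ref{cor:Wfzetaj}, which has already done the essential computational work. For each $x$ in $GF(p)^{2m}$, I define $f^*(x) \in \{0, 1, \dots, p-1\}$ as follows: set $f^*(0) = 0$, and for $x \neq 0$, set $f^*(x) = j$, where $j$ is the distinguished index from Proposition \ref{propn:distinguished-eigenvalue}, i.e., the unique $j \in \{1, 2, \dots, p\}$ with $\lambda_j(x) = N - r_j$. To view $f^*$ as a function into $GF(p)$ rather than into $\{1, \dots, p\}$, I reduce mod $p$, so the value $j = p$ is identified with $0$. This gives a well-defined function $f^* \colon GF(p)^{2m} \rightarrow GF(p)$.

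Next, I verify the required transform identity. By Corollary \ref{cor:Wfzetaj}, for $x \neq 0$ we have $W_f(x) = \zeta^j W_f(0)$, where $j$ is the distinguished index. Since $\zeta^p = 1$, the exponent depends only on $j$ mod $p$, so $\zeta^j = \zeta^{f^*(x)}$. For $x = 0$, we have $W_f(0) = \zeta^0 W_f(0) = \zeta^{f^*(0)} W_f(0)$ trivially. Therefore, for all $x$ in $GF(p)^{2m}$,
\[
W_f(x) = \zeta^{f^*(x)} W_f(0).
\]

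Finally, I split into the two cases according to the sign of $N$. In the feasible Latin square type case, $W_f(0) = p^m = p^{(2m)/2}$ by Corollary \ref{cor:Wfzetaj}, so $W_f(x) = \zeta^{f^*(x)} p^{(2m)/2}$, which is precisely the definition of $f$ being a regular bent function with dual $f^*$. In the feasible negative Latin square type case, $W_f(0) = -p^m$, so $W_f(x) = -\zeta^{f^*(x)} p^{(2m)/2}$, which is the definition of $f$ being $(-1)$-weakly regular with dual $f^*$. Bentness itself was already established in Theorem \ref{thm:main1}, so nothing further is required.

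There is really no obstacle here: all the hard work was done in Proposition \ref{propn:distinguished-eigenvalue} (uniqueness of the distinguished index, which makes $f^*$ well defined and $GF(p)$-valued) and in Corollary \ref{cor:Wfzetaj} (the explicit formula for $W_f(x)$). The only small subtlety to confirm is the identification $j = p \leftrightarrow 0 \pmod p$, which is exactly what makes $\zeta^j$ depend only on the residue class of $j$ modulo $p$ and lets us regard the distinguished index as an element of $GF(p)$.
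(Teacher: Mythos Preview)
Your proof is correct and essentially identical to the paper's own argument: both define $f^*(0)=0$, define $f^*(x)$ for $x\neq 0$ via the distinguished index of Proposition~\ref{propn:distinguished-eigenvalue} (reduced mod $p$), and then invoke Corollary~\ref{cor:Wfzetaj} together with Theorem~\ref{thm:main1} to read off regularity or $(-1)$-weak regularity from the sign of $W_f(0)$.
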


\begin{proof}
The function $f$ is bent, by Theorem \ref{thm:main1}.
We define $f^*(0)=0$.
By Proposition 
\ref{propn:distinguished-eigenvalue},
for every nonzero $x$ in $GF(p)^{2m}$, 
there is a unique 
distinguished index $j$ in $\{1, 2, \dots , p\}$ such 
that $\lambda_j(x)=N-r_j$. 
If $j \neq p$, we define $f^*(x)=j$, and if $j = p$, 
we define $f^*(x)=0$.
By Corollary \ref{cor:Wfzetaj},
$W_f(x) =\zeta^{f^*(x)}  p^m$ 
in the Latin square type case, 
and $W_f(x) =-\zeta^{f^*(x)}  p^m$ 
in the negative Latin square type case.
Thus, $f$ is regular in the 
Latin square type case and 
$(-1)$-weakly regular in the negative 
Latin square type case, 
and $f^*$ is the dual of $f$.
\hfill
\end{proof}

\subsection{Level sets of dual functions}
We describe the level 
sets of the dual function of $f$, 
when $f$ is of feasible 
Latin or negative Latin square type,
using Proposition 
\ref{propn:distinguished-eigenvalue},
on distinguished indices of eigenvalues.

Suppose that $f \colon GF(p)^{2m} \to GF(p)$ 
is an even bent function with $f(0)=0$
such that $f$ is regular or weakly regular.
Let $f^* \colon GF(p)^{2m} \to GF(p)$ be 
the dual function of $f$.
Recall that we define $D_i = f^{-1}(i)$, for 
$1 \leq i \leq p-1$, and $D_p = f^{-1}(0) \setminus \{0\}$.
Similarly we define the corresponding sets 
for the dual function:
\begin{equation}
\label{defn:Di*}
D^*_i=(f^*)^{-1}(i), 
\ \text{for $1 \leq i \leq p-1$, and} 
\ D^*_p = (f^*)^{-1}(0) \setminus\{0\}.
\end{equation}

Recall also that the eigenvalues of the adjacency matrix 
$A(i)$ of the component Cayley graph 
 $\Gamma_i$ of $f$ are the values 
 $\hat f_i (x)$ of the Fourier transform of $f_i$
 for $x$ in $GF(p)^{2m}$. 
 More concisely, 
\begin{equation*}
\lambda_i(x)=\hat {f_i}(x),
\end{equation*}
for $1 \leq i \leq p$ and for all $x$ in $GF(p)^{2m}$.
If the graphs $\Gamma_i$ are strongly regular, 
and all of feasible Latin square type 
or all of feasible negative Latin square type, 
then the eigenvalues of $A(i)$ are 
$k_i = (N-1)r_i$, $N - r_i$, and $-r_i$, 
where $N$ and $r_i$ are as in Equation 
(\ref{eqn:LSTdefn2}).

\begin{proposition}
\label{prop:Di*size}
Let $f \colon GF(p)^{2m} \rightarrow GF(p)$ be a 
feasible Latin or negative Latin square type 
$p$-ary function, where $p$ is a prime number greater than 2,
and $m$ is a positive integer.
Then the set $D_i^*$ is given by 
\[
D_i^* = \{ x \in GF(p)^{2m} \setminus\{0\} 
\ | \ \hat {f_i}(x) = N - r_i\},
\]
for $1 \leq i \leq p$.
Furthermore, the cardinality $k_i^*$ of $D_i^*$ is 
\[
k_i^* = |D_i^*| = (N-1)r_i = |D_i|=k_i.
\]
\end{proposition}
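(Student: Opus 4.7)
The plan is to unpack the definition of $f^*$ supplied by Proposition \ref{propn:f^*defn} and combine it with Proposition \ref{propn:distinguished-eigenvalue} to get the set-theoretic description of $D_i^*$, and then compute cardinalities by reading off the appropriate eigenvalue multiplicity of $A(i)$ on the subspace of $\ccc^{p^{2m}}$ orthogonal to the all $1$'s vector.

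First I would trace through the construction of $f^*$. By Proposition \ref{propn:distinguished-eigenvalue}, for every nonzero $x$ in $GF(p)^{2m}$ there is a unique distinguished index $j \in \{1,2,\dots,p\}$ with $\lambda_j(x) = N - r_j$, and $\lambda_i(x) = -r_i$ for $i \neq j$. In the proof of Proposition \ref{propn:f^*defn}, $f^*(x)$ is defined to be $j$ when $j \neq p$ and $0$ when $j = p$. Thus for $1 \leq i \leq p-1$ the set $D_i^* = (f^*)^{-1}(i)$ is exactly the set of nonzero $x$ whose distinguished index equals $i$, i.e.\ those $x$ with $\lambda_i(x) = N - r_i$; while $D_p^* = (f^*)^{-1}(0) \setminus \{0\}$ is the set of nonzero $x$ whose distinguished index is $p$, i.e.\ $\lambda_p(x) = N - r_p$. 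Since $\lambda_i(x) = \hat{f_i}(x)$ by Lemma \ref{lemma:lamfi} (and Equation (\ref{eqn:lambdaiofx})), this gives
\[
D_i^* = \{ x \in GF(p)^{2m} \setminus \{0\} \mid \hat{f_i}(x) = N - r_i \}
\]
for $1 \leq i \leq p$.

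For the cardinality, I would use the fact (Lemma \ref{lem:Hadvec}, Lemma \ref{lemma:lamfi}) that the Hadamard vectors $h(x)$, $x \in GF(p)^{2m}$, form an orthogonal eigenbasis of $A(i)$, with $h(0)$ contributing the degree eigenvalue $k_i$. Hence $|D_i^*|$ equals the multiplicity of the eigenvalue $N - r_i$ of $A(i)$ on the orthogonal complement of $h(0)$. In the feasible Latin square type case ($N = p^m$), $N - r_i$ is the eigenvalue called $\theta$ in \S\ref{subsec:LSTNLST}, with multiplicity $m_\theta = (N-1)r_i$. In the feasible negative Latin square type case ($N = -p^m$), the same numerical value $N - r_i$ is the eigenvalue called $\tau$, with multiplicity $m_\tau = (N-1)r_i$. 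In both cases $|D_i^*| = (N-1)r_i = k_i = |D_i|$, as claimed.

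The only thing to be careful about is the $\theta$-versus-$\tau$ bookkeeping when passing from Latin to negative Latin square type: the labels and their multiplicities switch, but the distinguished value $N - r_i$ consistently has multiplicity $(N-1)r_i$, so the statement comes out uniformly. Beyond that, the result is essentially a direct translation of the eigenvalue data from \S\ref{subsec:LSTNLST} through the dictionary $\lambda_i(x) = \hat{f_i}(x)$; I expect no substantive obstacle.
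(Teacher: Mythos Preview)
Your proposal is correct and follows essentially the same approach as the paper: derive the description of $D_i^*$ from the definition of $f^*$ in Proposition~\ref{propn:f^*defn} together with Proposition~\ref{propn:distinguished-eigenvalue} and the identity $\lambda_i(x)=\hat{f_i}(x)$, then read off $|D_i^*|$ as the multiplicity of $N-r_i$ on the orthogonal complement of the all $1$'s vector from \S\ref{subsec:LSTNLST}. Your explicit separation of the LST/NLST cases for the $\theta$/$\tau$ bookkeeping is a minor elaboration of what the paper leaves implicit.
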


\begin{proof}
The description of $D_i^*$ follows directly 
from the description of the dual function $f^*$ 
in the proof of Proposition \ref{propn:f^*defn} 
(noting that $\lambda_i(x) = \hat {f_i}(x)$).
For every nonzero $x$ in $GF(p)^{2m}$, 
there is a unique 
distinguished index $i$ in $\{1, 2, \dots , p\}$ such 
that $\lambda_i(x)=N-r_i$, 
by Proposition 
\ref{propn:distinguished-eigenvalue}.
The multiplicity of $N-r_i$ as an eigenvalue 
of $\Gamma_i$ on the orthogonal 
complement of the all 1's vector 
is given by $(N-1)r_i$
(see \S\ref{subsec:LSTNLST}).
\hfill
\end{proof}

\subsection{Component functions of dual functions}
We formulate an expression 
for the $i$th component function of a dual 
function $f^*$, in terms of the Fourier transform 
of the $i$th component function of the 
original function $f$. 

The $i$th component function of $f^*$ is 
the function $f^*_i \colon GF(p)^{2m} \rightarrow \ccc$ 
given by
\[
f_i^*(x) = 
\begin{cases}
1 &\qquad \text{ if $x \in D_i^*$,}\\
0 & \qquad \text{otherwise,}
\end{cases}
\]
for $1 \leq i \leq p$. 

The following result is an immediate corollary of 
Proposition \ref{prop:Di*size}.

\begin{corollary}
\label{cor:Di*f_hat}
Let $f \colon GF(p)^{2m} \rightarrow GF(p)$ be a 
feasible Latin or negative Latin square type 
$p$-ary function, where $p$ is a prime number greater than 2,
and $m$ is a positive integer.
Then $f_i^*(0)=0$, and 
\begin{equation*}
%\label{eqn:fi*defn}
f_i^*(x)=1 \quad 
\text{if and only if} \quad
\hat {f_i}(x)=N-r_i,
\end{equation*}
for $x$ in $GF(p)^{2m} \setminus\{0\}$
and $1 \leq i \leq p$, 
where $N$ and $r_i$ are as in 
Equation (\ref{eqn:LSTdefn2}).
\end{corollary}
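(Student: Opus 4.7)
The claim is essentially a restatement of Proposition \ref{prop:Di*size} in the language of component functions, so my plan is to simply unwind the definitions and invoke that proposition.

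First I would recall that by definition $f_i^*$ is the indicator function of the set $D_i^*$ introduced in Equation (\ref{defn:Di*}), so $f_i^*(x) = 1$ if and only if $x \in D_i^*$, and $f_i^*(x) = 0$ otherwise. To handle the value at $0$, I would separate two cases. For $1 \leq i \leq p - 1$, we have $D_i^* = (f^*)^{-1}(i)$; since $f^*(0) = 0$ (as set in the proof of Proposition \ref{propn:f^*defn}), it follows that $0 \notin D_i^*$. For $i = p$, the definition $D_p^* = (f^*)^{-1}(0) \setminus \{0\}$ explicitly removes $0$. Either way, $0 \notin D_i^*$, so $f_i^*(0) = 0$.

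For the second assertion, fix a nonzero $x$ in $GF(p)^{2m}$. Then $f_i^*(x) = 1$ if and only if $x \in D_i^*$. Proposition \ref{prop:Di*size} gives the explicit description
\[
D_i^* = \{x \in GF(p)^{2m} \setminus \{0\} \mid \hat{f_i}(x) = N - r_i\},
\]
so membership in $D_i^*$ is equivalent to $\hat{f_i}(x) = N - r_i$. This yields the stated equivalence.

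I do not expect any real obstacle here: the content is entirely contained in Proposition \ref{prop:Di*size}, and the only subtlety is keeping track of the asymmetric definition of $D_p^*$ versus $D_i^*$ for $i < p$, which is handled by the case split above. The corollary is really the translation of the eigenvalue description of $D_i^*$ into the indicator-function language that will be used in the subsequent discussion of the component functions of $f^*$.
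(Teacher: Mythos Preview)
Your proposal is correct and matches the paper's approach: the paper simply states that this is an immediate corollary of Proposition \ref{prop:Di*size}, and you have supplied exactly the routine unwinding of definitions that justifies this. Your case split for verifying $f_i^*(0)=0$ is the only detail the paper leaves implicit, and you handle it correctly.
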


In order to relate
the component functions of the dual 
 $f^*$ to the component functions of $f$, we 
introduce a delta function and its Fourier transform.
We define the {\it delta function at $0$ on $GF(p)^{2m}$} 
to be the function $\delta_0 \colon GF(p)^{2m} \to \ccc$ 
given by 
\begin{equation}
\label{defn:delta}
\delta_0(x) = 
\begin{cases}
1 & \qquad \text{if $x=0$,}\\
0 & \qquad \text{otherwise.}
\end{cases}
\end{equation}
We denote by $\iota$ the constant function
$\iota \colon GF(p)^{2m} \to \ccc$ 
given by $\iota(x) = 1$,
for $x$ in $GF(p)^{2m}$.

The next lemma follows directly from the definition of 
the Fourier transform.  

\begin{lemma}
\label{lem:Fouriotadelta}
The Fourier transform of the delta function at $0$ 
on $GF(p)^{2m}$ is given by 
\[
\hat \delta_0 = \iota,
\]
where $\iota$ is the function 
defined above, with 
constant value 1 on $GF(p)^{2m}$.
The Fourier transform of $\iota$ is given by 
\[
\hat \iota = p^{2m} \delta_0.
\]
\end{lemma}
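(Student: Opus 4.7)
The plan is to verify both identities by direct substitution into the defining formula for the Fourier transform, $\hat{g}(x) = \sum_{y \in GF(p)^{2m}} g(y)\, \zeta^{-\langle x, y\rangle}$, and reduce everything to the elementary orthogonality relation $\sum_{j=0}^{p-1} \zeta^{-j} = 0$.

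For the first identity, I would plug in $g = \delta_0$. Since $\delta_0$ vanishes off $0$, only the $y=0$ summand survives, and it equals $\zeta^{-\langle x,0\rangle} = 1$ regardless of $x$. This gives $\hat{\delta}_0(x)=1$ for every $x$ in $GF(p)^{2m}$, which is exactly the statement $\hat{\delta}_0 = \iota$.

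For the second identity, I would plug in $g = \iota$, obtaining $\hat{\iota}(x) = \sum_{y} \zeta^{-\langle x, y\rangle}$. Then I would split into two cases. When $x = 0$, every summand equals $1$, so $\hat{\iota}(0)=p^{2m}$. When $x \neq 0$, the map $y \mapsto \langle x, y\rangle$ is a nonzero $GF(p)$-linear functional on $GF(p)^{2m}$, so by rank-nullity it is surjective with every fiber of size $p^{2m-1}$; grouping the sum by the value of $\langle x, y\rangle$ yields
\[
\hat{\iota}(x) = p^{2m-1}\sum_{j=0}^{p-1} \zeta^{-j} = 0.
\]
Combining both cases gives $\hat{\iota} = p^{2m}\delta_0$, as required.

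No real obstacle arises; the only nontrivial ingredient is the vanishing of $\sum_{j=0}^{p-1} \zeta^{-j}$, which is immediate from $\zeta$ being a primitive $p$th root of unity. The lemma is essentially the standard character-orthogonality identity for the additive group $(GF(p)^{2m}, +)$, phrased for the delta and constant functions.
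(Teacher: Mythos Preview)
Your proof is correct and is exactly the direct-from-the-definition computation the paper has in mind; the paper itself does not spell out any details, stating only that the lemma ``follows directly from the definition of the Fourier transform.''
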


The following proposition 
expresses the component functions of the 
dual $f^*$ in terms of the 
Fourier transforms of the component 
functions of $f$.

\begin{proposition}
\label{propn:fi*2}
Let $f \colon GF(p)^{2m} \rightarrow GF(p)$ be a 
feasible Latin or negative Latin square type 
$p$-ary function, where $p$ is a prime number greater than 2,
and $m$ is a positive integer.
Then the $i$th component function $f_i^*$ of 
the dual of $f$ satisfies
\begin{equation}
\label{eqn:fi*2}
f_i^*(x)=\frac 1 N \hat {f_i}(x) + \frac {r_i} N 
- r_i \delta_0(x),
\end{equation}
for $1 \leq i \leq p$,
where $N$ and $r_i$ are as in Equation 
(\ref{eqn:LSTdefn2}), and $\hat {f_i}$ is 
the Fourier transform of the $i$th 
component function of $f$.
\end{proposition}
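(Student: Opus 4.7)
The plan is to verify Equation (\ref{eqn:fi*2}) pointwise by splitting on whether $x = 0$ or $x \neq 0$ and, in the nonzero case, on which of the two possible eigenvalues $\hat{f_i}(x)$ realizes. The key observation supplied by the earlier results is that for $x \neq 0$ the value $\hat{f_i}(x) = \lambda_i(x)$ can only equal $N - r_i$ or $-r_i$, by Proposition \ref{propn:distinguished-eigenvalue}, and Corollary \ref{cor:Di*f_hat} tells us exactly which of these two values corresponds to $f_i^*(x) = 1$ and which to $f_i^*(x) = 0$. So the proof is essentially a finite check that the affine expression on the right-hand side of (\ref{eqn:fi*2}) interpolates correctly between the two allowed input values.

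First I would treat $x \neq 0$. The delta term vanishes, so the right-hand side reduces to $\frac{1}{N} \hat{f_i}(x) + \frac{r_i}{N}$. Substituting $\hat{f_i}(x) = N - r_i$ gives $\frac{N-r_i}{N} + \frac{r_i}{N} = 1$, matching $f_i^*(x) = 1$ from Corollary \ref{cor:Di*f_hat}. Substituting $\hat{f_i}(x) = -r_i$ gives $\frac{-r_i}{N} + \frac{r_i}{N} = 0$, matching $f_i^*(x) = 0$. Since these are the only two possibilities for nonzero $x$, the identity holds there.

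Next I would handle $x = 0$ separately. By definition $\hat{f_i}(0) = \sum_y f_i(y) = |D_i| = (N-1)r_i$ (using Proposition \ref{prop:sizeD_i}, or the fact that $\Gamma_i$ has degree $(N-1)r_i$), and $\delta_0(0) = 1$, so the right-hand side becomes
\[
\frac{(N-1)r_i}{N} + \frac{r_i}{N} - r_i = \frac{(N-1)r_i + r_i - Nr_i}{N} = 0,
\]
which agrees with $f_i^*(0) = 0$ from Corollary \ref{cor:Di*f_hat}.

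There is no real obstacle here; the work was done in establishing Proposition \ref{propn:distinguished-eigenvalue} and Corollary \ref{cor:Di*f_hat}. If anything, the only mildly delicate point is to ensure that the exceptional case noted at the end of \S\ref{subsec:LSTNLST} (where $m=1$, $N=p$, and one component graph degenerates into copies of $K_p$) does not disrupt the dichotomy $\{N-r_i, -r_i\}$ for $\hat{f_i}(x)$; but even there the two-value list for $\lambda_i(x)$ on $x \neq 0$ remains valid, so the verification goes through uniformly.
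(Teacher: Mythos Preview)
Your proof is correct and follows essentially the same case-by-case verification as the paper: evaluate the right-hand side at $x=0$ (where $\hat{f_i}(0)=(N-1)r_i$) and at $x\neq 0$ (splitting on whether $\hat{f_i}(x)=N-r_i$ or $-r_i$), invoking the same prior results (the paper cites Proposition~\ref{prop:Di*size} where you cite its immediate Corollary~\ref{cor:Di*f_hat}). The only cosmetic difference is that you justify $|D_i|=(N-1)r_i$ via Proposition~\ref{prop:sizeD_i}, whereas it follows more directly from the feasibility hypothesis itself.
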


\begin{proof}
Recall that $\hat {f_i}(x)=\lambda_i(x)$, 
where $\lambda_i(x)$ is the 
eigenvalue of the adjacency 
matrix of $\Gamma_i$ corresponding 
to the Hadamard eigenvector $h(x)$. 
Thus, by Equations (\ref{eqn:thetatau1}) and 
(\ref{eqn:thetatau2}) 
of \S \ref{subsec:LSTNLST}, 
$\hat{f_i}(0)=(N-1)r_i$, 
and $\hat{f_i}(x)$ equals either 
$N-r_i$ or $-r_i$ for $x \neq 0$.
By Proposition \ref{prop:Di*size}, for $x \neq 0$, 
$\hat{f_i}(x)=N-r_i$ if and 
only if $x \in D_i^*$, 
where $D_i^*$ is as in 
Equation (\ref{defn:Di*}). 
Therefore,
\begin{align*}
\frac 1 N \hat {f_i}(x) + \frac {r_i} N 
- r_i \delta_0(x)
&=
\begin{cases}
\frac 1 N (N-1)r_i + \frac {r_i} N - r_i 
&\ \text{if $x=0$,}\\
\frac 1 N (N-r_i) + \frac {r_i} N - 0
&\ \text{if $x\in D_i^*$,}\\
\frac 1 N (-r_i) + \frac {r_i} N - 0
&\ \text{otherwise,}
\end{cases}\\
&=
\begin{cases}
1 & \ \text{if $x \in D_i^*$}\\
0 &\ \text{otherwise,}
\end{cases}\\
&=f_i(x).
\end{align*}
\hfill
\end{proof}

\subsection{Eigenvalues for dual functions}
We calculate the eigenvalues 
of the component Cayley graphs 
of dual functions, 
using the Fourier transforms of the 
corresponding component functions described 
in Proposition \ref{propn:fi*2} above.

We first review some basic properties of 
inverse Fourier transforms, which 
we will need in the proof of the next proposition.
Recall that the Fourier transform of 
a function $g \colon GF(p)^n \to \ccc$ 
is the function $\hat g \colon GF(p)^n \to \ccc$
given by 
\[
\hat g (x) = \sum_{y \in GF(p)^n} g(y) 
\zeta^{- \langle x,y \rangle}.
\]
The {\it inverse Fourier transform} of a function $h \colon 
GF(p)^n \to \ccc$ is the function 
$\check h \colon GF(p)^n \to \ccc$ 
given by 
\[
\check h (x) =\frac 1 {p^n} \sum_{y \in GF(p)^n} h(y) 
\zeta^{\langle x,y \rangle}.
\]
The Fourier transform and its inverse satisfy
\[
\check{\hat g} (x) = g(x).
\]
If $g$ is an even function, 
i.e., if $g(-x)=g(x)$ for all $x$ in $GF(p)^n$, 
then 
\begin{align*}
\check g (x) 
&=\frac 1 {p^n} \hat g (x).
\end{align*}
Thus, if $g$ is even, 
\begin{equation}
\label{eqn:ghathat}
\hat{\hat g} (x) = p^n g(x).
\end{equation}

The next proposition is dual to 
Proposition \ref{propn:fi*2} 
in the sense that it 
expresses the Fourier transforms 
of the component functions of the 
dual function
$f^*$ in terms of the component 
functions of the original function $f$.

\begin{proposition}
\label{propn:Fourierfi*}
Let $f \colon GF(p)^{2m} \rightarrow GF(p)$ be a 
feasible Latin or negative Latin square type 
$p$-ary function, where $p$ is a prime number greater than 2,
and $m$ is a positive integer.
Then the Fourier transform 
of the $i$th component function $f_i^*$ of 
the dual of $f$ satisfies
\[
(f_i^*)^\wedge (x) 
=
N f_i(x)+N r_i \delta_0(x) - r_i,
\]
for $1 \leq i \leq p$,
where $N$ and $r_i$ are as in 
Equation (\ref{eqn:LSTdefn2}).
\end{proposition}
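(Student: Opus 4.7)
The plan is to obtain the formula by taking the Fourier transform of both sides of Equation (\ref{eqn:fi*2}) in Proposition \ref{propn:fi*2} and simplifying using the elementary Fourier identities from Lemma \ref{lem:Fouriotadelta} together with the Fourier inversion statement in Equation (\ref{eqn:ghathat}).

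First I would note that, since $f$ is even and $f(0)=0$, each level set $D_i$ is symmetric under negation, so the component indicator function $f_i$ is even. Consequently Equation (\ref{eqn:ghathat}) applies and gives $(\hat{f_i})^\wedge(x) = p^{2m} f_i(x) = N^2 f_i(x)$, using $p^{2m}=N^2$.

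Next, I would apply the Fourier transform termwise to
\[
f_i^*(x)=\frac{1}{N}\,\hat{f_i}(x) + \frac{r_i}{N}\,\iota(x) - r_i\,\delta_0(x),
\]
which is exactly Equation (\ref{eqn:fi*2}) rewritten with the constant $\tfrac{r_i}{N}$ expressed as $\tfrac{r_i}{N}\iota(x)$. Linearity of the Fourier transform, combined with the identities $\hat{\iota}=p^{2m}\delta_0 = N^2 \delta_0$ and $\hat{\delta_0}=\iota$ from Lemma \ref{lem:Fouriotadelta}, yields
\[
(f_i^*)^\wedge(x) = \frac{1}{N}\cdot N^2 f_i(x) + \frac{r_i}{N}\cdot N^2\delta_0(x) - r_i\cdot \iota(x)
= N f_i(x) + N r_i\,\delta_0(x) - r_i,
\]
which is the claimed identity.

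There is no serious obstacle here; the only point requiring care is the justification that $f_i$ is even, so that the double Fourier transform identity (\ref{eqn:ghathat}) can be invoked. Everything else is routine bookkeeping with the three constants $1/N$, $r_i/N$, and $-r_i$ and the substitutions from Lemma \ref{lem:Fouriotadelta}.
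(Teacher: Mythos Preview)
Your proposal is correct and follows essentially the same approach as the paper's own proof: both take the Fourier transform of Equation~(\ref{eqn:fi*2}) termwise and simplify using Lemma~\ref{lem:Fouriotadelta} and Equation~(\ref{eqn:ghathat}). Your explicit remark that $f_i$ is even (needed to invoke~(\ref{eqn:ghathat})) is a small clarification the paper leaves implicit, but otherwise the arguments are identical.
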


\begin{proof}
Taking the Fourier transform of each term 
of Equation (\ref{eqn:fi*2}) of Proposition 
\ref{propn:fi*2}, we obtain
\begin{equation*}
( f_i^*)^\wedge (x) 
= \frac 1 N \hat{\hat {f_i}} (x) + 
\frac {r_i} N \hat \iota(x) - r_i \hat {\delta_0} (x),
\end{equation*}
where $\iota$ is the 
constant function with value 1 on $GF(p)^{2m}$.
By Equation (\ref{eqn:ghathat}), 
$\hat{\hat {f_i}} (x) =p^{2m}f_i(x) = N^2f_i(x)$.
The Fourier transforms of $\iota$ and $\delta_0$ 
are given by Lemma \ref{lem:Fouriotadelta} 
as $\hat \iota(x)= N^2 \delta_0(x)$
and $\hat {\delta_0}(x)=1$ . 
Therefore 
\begin{equation*}
( f_i^*)^\wedge (x) 
 = N f_i(x) + N r_i \delta_0(x) -r_i.
\end{equation*}
\hfill
\end{proof}

From Proposition \ref{propn:Fourierfi*},
we obtain the eigenvalues of the 
component Cayley graphs of the dual function.

\begin{corollary}
\label{cor:Gamma*}
Let $f \colon GF(p)^{2m} \rightarrow GF(p)$ be a 
feasible Latin or negative Latin square type 
$p$-ary function, where $p$ is a prime number greater than 2,
and $m$ is a positive integer.
Then the eigenvalues of the component Cayley graph $\Gamma_i^*$ 
of the dual of $f$ 
and their multiplicities are the same as those of 
$\Gamma_i$, for $1 \leq i \leq p$.
Specifically,
\begin{enumerate}
\item the eigenvalue $k^*_i=(N-1)r_i$ 
corresponds to the all 1's eigenvector,  
\item the eigenvalue $N-r_i$ 
occurs with multiplicity 
$(N-1)r_i$ on the vector space orthogonal to the 
all 1's vector, and
\item the eigenvalue $-r_i$ 
occurs with multiplicity 
$(N-1)(N+1-r_i)$ 
on the vector space orthogonal to the 
all 1's vector,
\end{enumerate}
where $N$ and $r_i$ are as in 
Equation (\ref{eqn:LSTdefn2}).
\end{corollary}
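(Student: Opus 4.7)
The plan is to combine Proposition \ref{propn:Fourierfi*} with Lemma \ref{lemma:lamfi}, applied to the dual function $f^*$ in place of $f$. Since $f$ is even with $f(0)=0$, the dual $f^*$ is also an even $p$-ary function with $f^*(0)=0$ (as noted in the discussion of duality), so Lemma \ref{lemma:lamfi} applies: the Hadamard vectors $h(x)$ form a basis of common eigenvectors for the adjacency matrices $A(i)^*$ of the component Cayley graphs $\Gamma_i^*$, and the eigenvalue of $A(i)^*$ corresponding to $h(x)$ equals the value $(f_i^*)^{\wedge}(x)$ of the Fourier transform of $f_i^*$ at $x$.

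The main step is then simply to evaluate the right-hand side of Proposition \ref{propn:Fourierfi*},
\[
(f_i^*)^{\wedge}(x) = N f_i(x) + N r_i \delta_0(x) - r_i,
\]
in the three possible cases for $x$. For $x=0$ we have $f_i(0)=0$ and $\delta_0(0)=1$, so $(f_i^*)^{\wedge}(0) = Nr_i - r_i = (N-1)r_i$, which matches the all $1$'s eigenvector being the Hadamard vector $h(0)$ and yields the degree $k_i^* = (N-1)r_i$. For $x\neq 0$ with $x\in D_i$ we get $N\cdot 1 + 0 - r_i = N - r_i$, and for $x\neq 0$ with $x\notin D_i$ (including $x \in D_p$ if $i\neq p$, and $x \in \{0\}$-complement cases) we get $0 + 0 - r_i = -r_i$.

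It remains to count multiplicities on the orthogonal complement of $h(0)$. The eigenvalue $N - r_i$ is attained precisely at the $|D_i|$ Hadamard vectors $h(x)$ with $x\in D_i$, and $|D_i|=k_i=(N-1)r_i$, giving multiplicity $(N-1)r_i$. The eigenvalue $-r_i$ is attained at the remaining nonzero $x$'s, of which there are
\[
p^{2m} - 1 - (N-1)r_i = N^2 - 1 - (N-1)r_i = (N-1)(N+1-r_i),
\]
giving the claimed multiplicity. Since the Hadamard vectors are linearly independent over $\mathbb{C}$ by Lemma \ref{lem:Hadvec}, these three eigenvalues with these multiplicities account for all of $\mathbb{C}^{p^{2m}}$, and the three items of the corollary follow. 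No real obstacle arises; the computation is a direct substitution once Proposition \ref{propn:Fourierfi*} and Lemma \ref{lemma:lamfi} are in hand.
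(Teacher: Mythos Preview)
Your proof is correct and follows essentially the same approach as the paper: both apply Proposition~\ref{propn:Fourierfi*} to compute $(f_i^*)^\wedge(x)$ in the three cases $x=0$, $x\in D_i$, and $x\notin\{0\}\cup D_i$, and then read off the multiplicities from $|D_i|=(N-1)r_i$ and the size of its complement. You are slightly more explicit than the paper in justifying why Lemma~\ref{lemma:lamfi} applies to $f^*$ (namely that $f^*$ is even with $f^*(0)=0$), which is a nice touch.
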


\begin{proof}
Let $f_i^*$ be the $i$th component function of the dual of $f$.
The eigenvalues of $\Gamma_i^*$ are the values 
$(f_i^*)^\wedge(x)$ of the Fourier transform of $f_i^*$, 
for $x$ in $GF(p)^{2m}$.
By Proposition \ref{propn:Fourierfi*}, 
the Fourier transform of $f_i^*$ is given by 
\[
(f_i^*)^\wedge (x) 
=
N f_i(x)+N r_i \delta_0(x) - r_i.
\]
Thus,
\begin{align*}
(f_i^*)^\wedge (x) 
&=
\begin{cases}
N r_i  - r_i 
& \qquad \text{ if $x=0$,}\\
N  - r_i 
& \qquad \text{ if $x \in D_i$,}\\
 - r_i 
& \qquad \text{ if $x \in GF(p)^{2m} \setminus 
\left( \{0\} \cup D_i \right)$,}
\end{cases}
\end{align*}
so $\Gamma^*_i$ has the same eigenvalues as $\Gamma_i$.
The multiplicity of $N-r_i$ on 
the vector space orthogonal to the 
all 1's vector is $|D_i| = (N-1)r_i$. 
The multiplicity of $- r_i$ on 
the vector space orthogonal to the 
all 1's vector is
\[
|GF(p)^{2m} \setminus \left( \{0\} \cup D_i  \right) |
= N^2 - 1 - (N-1) r_i = (N-1)(N+1-r_i),
\] 
Therefore, the multiplicities of the eigenvalues of $\Gamma_i^*$ 
are the same as those of $\Gamma_i$. 
\hfill
\end{proof}

\subsection{Duality theorem}
In this section we prove that the dual of a feasible 
Latin or negative Latin square type function 
is also a feasible Latin or, respectively, 
negative Latin square type function.

\begin{theorem}
\label{thm:duals}
Let $f \colon GF(p)^{2m} \rightarrow GF(p)$ be an 
even function such that $f(0)=0$, where $p$ is a prime 
number greater than 2,
and $m$ is a positive integer.
Suppose that the component Cayley graphs 
$\Gamma_i$ of $f$
are all strongly regular and  
are either all of feasible Latin square type, 
or all of feasible negative Latin square type.
Then the component Cayley graphs $\Gamma_i^*$ 
of the dual function $f^*$ 
are also all strongly regular, and 
the parameters of  $\Gamma_i^*$ are the same as 
the parameters of $\Gamma_i$. 
\end{theorem}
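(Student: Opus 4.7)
The plan is to reduce the theorem to the eigenvalue computation already carried out in Corollary \ref{cor:Gamma*}, since the parameters of a strongly regular graph are determined by its spectrum. By Theorem \ref{thm:main1} the function $f$ is bent, and by Proposition \ref{propn:f^*defn} it admits a regular (resp.\ $(-1)$-weakly regular) dual $f^*$ with $f^*(0)=0$. Since $f$ is even, so is $f^*$, and hence each component Cayley graph $\Gamma_i^*$ is a well-defined undirected graph on $p^{2m}$ vertices; by Proposition \ref{prop:Di*size}, $\Gamma_i^*$ is regular of the same degree $k_i=(N-1)r_i$ as $\Gamma_i$.

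Next, I invoke Corollary \ref{cor:Gamma*}, which computes the full spectrum (eigenvalues with multiplicities) of the adjacency matrix $A_i^*$ of $\Gamma_i^*$ and shows that it coincides exactly with the spectrum of $A_i$: eigenvalue $(N-1)r_i$ on the span of the all 1's vector, eigenvalue $N-r_i$ with multiplicity $(N-1)r_i$ on the orthogonal complement, and eigenvalue $-r_i$ with multiplicity $(N-1)(N+1-r_i)$.

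To conclude, I apply the standard spectral criterion for strong regularity. If a matrix $A$ satisfies $AJ=kJ$ (regularity) and all its eigenvalues lie in $\{k,\theta,\tau\}$, then $(A-\theta I)(A-\tau I)$ acts as zero on the orthogonal complement of the all 1's vector and as $(k-\theta)(k-\tau)$ on the span of the all 1's vector, so $(A-\theta I)(A-\tau I)=\frac{(k-\theta)(k-\tau)}{\nu}J$. This rearranges to $A^2 = kI + \lambda A + \mu(J-I-A)$ with $\lambda-\mu=\theta+\tau$ and $\mu-k=\theta\tau$ --- exactly the relations used in Section 5 to derive the SRG parameters. Applied to $A_i^*$ with $\theta=N-r_i$ and $\tau=-r_i$, this forces $\Gamma_i^*$ to be strongly regular with the same parameters $(N^2,(N-1)r_i,N+r_i^2-3r_i,r_i^2-r_i)$ as $\Gamma_i$, including the degenerate case $m=1$, $N=p$, $r_i=1$, in which $\Gamma_i\cong pK_p$ and only two distinct eigenvalues appear. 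The main obstacle here is essentially cosmetic: all substantive Fourier-analytic work has already been accomplished in Propositions \ref{propn:fi*2} and \ref{propn:Fourierfi*} and in Corollary \ref{cor:Gamma*}, and what remains is merely to translate matched spectra into matched SRG parameters.
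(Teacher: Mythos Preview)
Your proof is correct and follows essentially the same route as the paper: invoke Corollary \ref{cor:Gamma*} to obtain the spectrum of $\Gamma_i^*$, use the spectral characterization of strongly regular graphs (which the paper cites from Brouwer--Haemers rather than rederiving), and then recover $\lambda_i^*$ and $\mu_i^*$ from the relations $\theta+\tau=\lambda-\mu$ and $\theta\tau=\mu-k$ of Equation~(\ref{eqn:thetatau0}). Your explicit treatment of the $(A-\theta I)(A-\tau I)$ computation and of the degenerate case $m=1$, $r_i=1$ adds a bit more detail than the paper, but the logical structure is the same.
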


\begin{proof}
Recall that a simple regular graph which is not complete or edgeless 
and which has exactly two distinct eigenvalues corresponding to 
eigenvectors orthogonal to the all 1's vector must be a 
strongly regular graph (see, for example, Brouwer and Haemers,
\cite[Theorem 9.1.2]{BH11}).
Thus, it follows directly from Corollary \ref{cor:Gamma*}
that the component Cayley 
graphs $\Gamma_i^*$ are all strongly regular,
for $1 \leq i \leq p$, since each has at most 3 eigenvalues.
Of the parameters $(\nu,k_i^*, \lambda_i^*, \mu_i^*)$ 
for $\Gamma_i^*$, the parameters $\nu=N^2$ and $k_i^* = (N-1)r_i$ 
are known, (where, as above, $N =  \pm p^m$, 
$r_i = \frac N p$ 
for $1 \leq i \leq p-1$, 
and $r_p = \frac N p + 1$).   
We will calculate the parameters $\mu_i^*$ and $\lambda_i^*$ 
from the eigenvalues of $\Gamma_i^*$, 
using Corollary \ref{cor:Gamma*}.
We denote the two distinct eigenvalues of $\Gamma^*_i$ 
on the vector space orthogonal to the all 1's vector 
by $\theta_i$ and $\tau_i$ (where by convention
$\theta_i >\tau_i$, although this order 
is not needed here). By Corollary \ref{cor:Gamma*}, 
one of these two eigenvalues is $N-r_i$, and the 
other is $-r_i$. Therefore, 
by Equation (\ref{eqn:thetatau0}),  
\[
\mu_i^* = k_i^* + \theta_i^*\tau_i^* 
= r_i^2-r_i,
\]
and 
\[
\lambda_i^* = \mu_i^* + \theta_i^*+\tau_i^*
= N + r_i^2-3r_i.
\]
It follows that the graphs $\Gamma_i^*$ are either 
all of feasible Latin square type or all of 
feasible negative Latin square type.
\hfill
\end{proof}

\section{Amorphic bent functions}
In this section we consider even $p$-ary 
functions which determine association 
schemes.
We give a criterion 
for such a function to be bent, in terms of 
structure constants of its association 
scheme.
We show that a function which 
determines an amorphic association 
scheme and 
whose component Cayley graphs are 
of feasible degrees
must be bent.

It is well-known that a $p$-ary function 
$f \colon GF(p)^n \rightarrow GF(p)$ is bent if and 
only if the derivative functions given by 
$\mathcal D_b f(x) = f(x+b)-f(x)$ are balanced, 
for all nonzero $b$ in $GF(p)^n$ 
(i.e., $\mathcal D_b f$ takes all values equally often).
In the case that $f$ determines an 
association scheme,
we show that the number of times 
$\mathcal D_b f$ takes each value in 
$GF(p)$ can be expressed 
in a natural way in terms of 
structure constants of the association 
scheme. 
The structure constants of amorphic 
association schemes were 
described by Ito, Munemasa, and Yamada \cite{IMY91}. 
By summing the appropriate structure constants, 
we show that 
$\mathcal D_b f$ is balanced 
if $f$ is amorphic.

\subsection{Association schemes}
\label{sec:defn-assoc-scheme}
Let $V$ be a finite set.
A {\it binary relation} $R$ on $V$ is a subset of $V \times V$.
The {\it dual} of a relation $R$ is the set
$R^* = \{(x,y)\in V\times V\ |\ (y,x)\in R\}$.

Let $\{R_0, R_1, \dots, R_p\}$ be a set of disjoint
binary relations on $V$ whose union is $V \times V$, 
such that 
$R_0=\{ (x,x)\in V\times V\ |\ x\in V\}$,
and such that for each $i$ there is a $j$ 
for which $R_i^*=R_j$.
For $0 \leq i, j, k \leq p$ and for $(x,y)$ in $R_k$, 
let 
	\[ \rho_{ij}^k(x,y) = |\{z \in V \ | \ \text{$(x,z) \in R_i$ and $(z,y) \in R_j$}\}	|.
	\]
We say that the collection $(V, R_0, R_1, \dots , R_p)$ 
forms a {\it $p$-class association scheme} if the numbers $\rho_{ij}^k(x,y)$ 
are independent of 
which pair $(x,y)$ we choose in $R_k$
(hence depend only on $i$, $j$, and $k$).
The numbers $\rho_{ij}^k$ are called the 
{\it structure constants}
or {\it intersection numbers} of the association 
scheme.
If, in addition, $R_i^*=R_i$ for all $i$, then we say 
that the association scheme is {\it symmetric}.
A symmetric association scheme determines a collection 
of undirected graphs 
$\{ \Gamma_1, \Gamma_2 , \dots , \Gamma_p\}$ 
on the vertex set $V$.

We are interested in the case in which $V=GF(p)^n$ 
and the relations $R_i$ correspond to the component 
Cayley graphs $\Gamma_i$ of a $p$-ary function.
Let $f \colon GF(p)^n \rightarrow GF(p)$ be an 
even function such that $f(0)=0$, where $p$ 
is a prime number greater than 2, and $n$ is a positive integer.
Associated with $f$ is a set of binary relations 
$\{ R_0, R_1, \dots , R_{p-1},R_p\}$ on $V$ 
given by 
	\[
	R_0=\{ (x,x)\in V\times V\ |\ x\in V\},
	\]
	\[
	R_i = \{ (x,y) \in V \times V \ | \ f(x-y)=i\}
	\]
for $1 \leq i \leq p-1$,
and
	\[
	R_p=\{ (x,y) \in V \times V \ | \ \text {$f(x-y) = 0$ and $x \neq y$} \}.
	\]
Note that, by the assumption that $f$ is even, these relations are 
all self-dual. 
Furthermore, 
\begin{equation*}
%\label{eqn:DiRi}
(x,y) \in R_i \quad \text{if and only if} \quad
x-y \in D_i,
\end{equation*}
where $D_0 = \{0\}$, $D_i = f^{-1}(i)$ for $1 \leq i \leq p-1$, 
and $D_p = f^{-1}(0) \setminus \{0\}$.
The relations $R_i$, for $1 \leq i \leq p$, 
correspond to the component Cayley graphs $\Gamma_i$ of $f$.

Recall that we denote by $A(i)$ the adjacency matrix 
of the component Cayley graph $\Gamma_i$ of $f$.
Let $A(0)=I$, the $\nu \times \nu$ identity matrix, 
where $\nu = p^n$.
The matrices $A(0), A(1) , \dots , A(p)$ 
can also be thought of as 
the adjacency matrices of the relations 
$R_0, R_1, \dots , R_p$.
The sum of these adjacency matrices is the 
all 1's matrix $J$.
The condition that $f$ determines a $p$-class symmetric 
association scheme with structure constants $\rho_{ij}^k$
is equivalent to the condition that there 
exist nonnegative integers $\rho_{ij}^k$ 
such that 
\begin{equation*}
%\label{eqn:BoseMesner}
A(i)A(j) = \sum_{k=0}^p \rho_{ij}^k A(k)
\end{equation*}
for $0 \leq i, j, k \leq p$.
In this case, the matrices $A(0),A(1), \dots , A(p)$ 
generate a Bose-Mesner algebra (see, e.g., 
\cite[\S 6.7.1]{JM17}).
The structure constants can be calculated 
from the adjacency matrices  
by the following formula 
(see \cite[Chapter 17]{CvL80}):
\begin{equation}
\label{eqn:rhoijkformula}
\rho^k_{ij} 
= \left(\dfrac{1}{p^n|D_k|}\right) tr(A(i)A(j)A(k)),
\end{equation}
for $1 \leq i, j, k \leq p$,
where $tr$ denotes the matrix trace, 
and the sets $D_k$ are as above.

\subsection{Amorphic association schemes and functions}
Let $V$ be a finite set, and 
let $\mathcal R = \{R_0, R_1, \dots, R_p\}$ be a set of disjoint
binary relations on $V$ whose union is $V \times V$.
A set of disjoint binary relations $\mathcal T = \{T_0, T_1, \dots , T_m\}$ 
whose union is $V \times V$ is called a {\it fusion} of 
$\mathcal R$ if each $T_i$ is a union of 
elements of $\mathcal R$. 
An association scheme $(V, R_0, R_1, \dots , R_p)$ 
is called {\it amorphic} if for each fusion $\mathcal T$ of $\mathcal R$, 
the collection $(V, T_0, T_1, \dots , T_m)$ is also 
an association scheme. 
A 2-class association scheme is trivially amorphic.

Consider an even function 
$f \colon GF(p)^n \rightarrow GF(p)$  
such that $f(0)=0$.
Let $V=GF(p)^n$, and let $\{R_0, R_1, \dots , R_p\}$ 
be the binary relations determined by $f$, as described above.
We call $f$ {\it amorphic} if 
$(V, R_0, R_1, \dots , R_p)$ is an amorphic 
association scheme. 

\subsection{van Dam and Gol'fand--Ivanov--Klin theorems}
\label{subsec:vDam}
There is a close relationship between amorphic 
association schemes and strongly regular graphs 
of Latin and negative Latin square type.

A theorem of Gol'fand, Ivanov, and Klin
from \cite{GIK94} 
(which we learned of from van Dam and Muzychuk 
\cite{vDM10}), states that the graphs 
determined by a $p$-class amorphic association scheme, 
with $p\geq 3$, are all 
strongly regular, 
and are either all of Latin square type, 
or all of negative Latin square type. 

Van Dam \cite[Theorem 3]{vD03} 
proved the converse: a decomposition 
of a complete graph 
into strongly regular graphs, 
all of Latin square type, or 
all of negative Latin square type, 
determines an amorphic association scheme.
Thus, $p$-ary functions whose 
component Cayley graphs are 
strongly regular and 
all of Latin square type or 
all of negative Latin square type 
are amorphic functions.

\subsection{Balanced derivative criterion for bent functions}
\label{subsec:balanced-derivative}
Let $f \colon GF(p)^n \rightarrow GF(p)$ 
be a $p$-ary function.
The {\it derivative} function $\mathcal D_bf \colon GF(p)^n \rightarrow GF(p)$ is defined 
by 
	\[
	\mathcal D_bf(x) = f(x+b) - f(x).
	\]
If $f$ is linear, then $\mathcal D_bf$ is constant.
The following result, which is well-known (see, 
e.g., \cite[Proposition 6.3.9]{JM17}),
implies that bent functions are in 
some sense maximally non-linear.

\begin{proposition}
\label{propn:bent-balanced-derivative}
The function $f$ is bent if and only if $\mathcal D_bf$ is balanced for all 
$b \neq 0$, i.e., if $\mathcal D_bf$ takes each value in 
$GF(p)$ equally often.
\end{proposition}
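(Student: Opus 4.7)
The plan is to unpack $|W_f(x)|^2$ and relate it to a Fourier transform of the counts of values of the derivatives $\mathcal D_b f$. Write
\[
|W_f(x)|^2 = W_f(x)\overline{W_f(x)}
= \sum_{y,z \in GF(p)^n} \zeta^{f(y)-f(z)-\langle x, y-z\rangle}.
\]
I would then reindex via $b = y - z$, so that $y = z + b$, and isolate the inner sum. Defining
\[
S_b = \sum_{z \in GF(p)^n} \zeta^{\mathcal D_b f(z)},
\]
this gives $|W_f(x)|^2 = \sum_{b \in GF(p)^n} \zeta^{-\langle x,b\rangle} S_b$. Since $S_0 = p^n$, separating the $b=0$ term yields
\[
|W_f(x)|^2 = p^n + \sum_{b \neq 0} \zeta^{-\langle x,b\rangle} S_b.
\]

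Next I would reformulate bentness in these terms. The function $f$ is bent precisely when the remainder sum $\sum_{b \neq 0} \zeta^{-\langle x,b\rangle} S_b$ vanishes for every $x \in GF(p)^n$. Regarding $b \mapsto S_b$ (with $b \neq 0$) as a function on $GF(p)^n \setminus \{0\}$, the vanishing of all its Fourier coefficients is equivalent, by the invertibility of the Fourier transform on $GF(p)^n$ (cf.\ Lemma \ref{lem:Hadvec} on the Hadamard basis), to the condition $S_b = 0$ for every nonzero $b$. So I reduce the claim to showing that $S_b = 0$ if and only if $\mathcal D_b f$ is balanced.

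For the final step, let $N_j = |\{z \in GF(p)^n : \mathcal D_b f(z) = j\}|$ for $0 \leq j \leq p-1$, so $\sum_j N_j = p^n$ and $S_b = \sum_{j=0}^{p-1} N_j \zeta^j$. If $\mathcal D_b f$ is balanced, then each $N_j = p^{n-1}$, and $S_b = p^{n-1}(1+\zeta+\cdots +\zeta^{p-1}) = 0$. Conversely, assuming $S_b = 0$, I would substitute $1 = -(\zeta + \zeta^2 + \cdots + \zeta^{p-1})$ to rewrite the relation as $\sum_{j=1}^{p-1}(N_j - N_0)\zeta^j = 0$; then, using the linear independence of $\zeta,\zeta^2,\ldots,\zeta^{p-1}$ over $\qqq$ (already invoked in the proof of Corollary \ref{cor:W_f0-real}), I conclude $N_j = N_0$ for all $j$, hence $N_j = p^{n-1}$ and $\mathcal D_b f$ is balanced.

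The only mildly delicate point is the converse direction of Step 3, namely deducing that a Fourier transform which vanishes identically in $x$ forces $S_b = 0$ for each nonzero $b$; but this is an immediate consequence of the orthogonality of the Hadamard vectors used throughout \S\ref{subsec:Had}. Everything else is a routine manipulation of the Walsh sum together with a cyclotomic linear-independence argument.
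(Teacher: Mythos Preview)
Your argument is correct and is the standard proof of this well-known equivalence. Note, however, that the paper does not actually prove Proposition~\ref{propn:bent-balanced-derivative}: it merely records it as ``well-known'' with a reference to \cite[Proposition 6.3.9]{JM17}, so there is no paper proof to compare against. Your computation of $|W_f(x)|^2$ as the Fourier transform of $b\mapsto S_b$, followed by Fourier inversion to conclude $S_b=0$ for $b\neq 0$, and finally the cyclotomic linear-independence argument to recover balancedness from $S_b=0$, is exactly the usual route.

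One small wording issue: you speak of regarding $b\mapsto S_b$ ``as a function on $GF(p)^n\setminus\{0\}$'' and then inverting its Fourier transform, but the Fourier transform lives on the full group. The clean way to phrase it is to extend by zero at $b=0$ (or, equivalently, observe directly that $|W_f(x)|^2=\hat S(x)$ where $S(b)=S_b$, so bentness says $\hat S$ is the constant $p^n$, whence $S=p^n\delta_0$ by inversion). You clearly have the right idea in your final paragraph, so this is only a matter of presentation.
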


\subsection{Structure constant criterion for bent functions}
In this section, we consider even $p$-ary 
functions whose component Cayley graphs 
determine symmetric $p$-class association schemes. 
We state a criterion for such a function 
to be bent, involving sums of 
structure constants of these schemes. 

Let $f \colon GF(p)^n \rightarrow GF(p)$ 
be an even function with $f(0)=0$.
Suppose that $f$
determines an association scheme with
structure constants $\rho_{ij}^k$, 
as described in \S \ref{sec:defn-assoc-scheme}.
Let $D_i = f^{-1}(i)$, for $1 \leq i \leq p-1$, and 
let $D_p = f^{-1}(0)\setminus \{0\}$.

\begin{proposition}
\label{propn:balanced} 
Suppose that $b \in D_i$, for some $i$ such that $1 \leq i \leq p$.
The number of times $\mathcal D_bf$ takes the value $j$, 
for $1 \leq j \leq p-1$, is 
\begin{equation*}
%\label{eqn:structureconstantcount}
	\left( \sum_{k=0}^{p} \rho_{j+k \ (\text{mod}\ p), k}^i \right) + \rho_{p,p-j}^i.
\end{equation*}
\end{proposition}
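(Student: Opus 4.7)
The plan is to count $\#\{x \in GF(p)^n : \mathcal{D}_b f(x) = j\}$ by partitioning over the value $a = f(x)$, then recognize each resulting fiber-count as a structure constant evaluated at the reference pair $(b, 0) \in R_i$. Since $f^{-1}(0) = \{0\} \cup D_p$ is split between the relations $R_0$ and $R_p$, the two cases $a = 0$ and $a + j \equiv 0 \ (\text{mod}\ p)$ (i.e., $a = p - j$) each acquire a boundary contribution: the first from $x = 0$ (which must be handled separately from $D_p$), and the second from $x = -b$ (since $f(x+b) = 0$ can occur with $x+b = 0$). Using that $f$ is even, so that $f(-b) = f(b) = i$, these boundary terms contribute indicators equal to $1$ when $i = j$ and when $i = p - j$, respectively.

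The key step is to express each $\#\{x \in D_a : x + b \in D_c\}$, for $a, c \in \{1, \ldots, p\}$, as $\rho_{c, a}^i$. The substitution $z = -x$ gives a bijection onto $\{z \in D_a : b - z \in D_c\}$, using $D_a = -D_a$ (immediate from $f$ being even). The latter set is exactly the one counted by $\rho_{c, a}^i$ at the pair $(b, 0) \in R_i$, because $(b, z) \in R_c$ unfolds as $b - z \in D_c$ and $(z, 0) \in R_a$ unfolds as $z \in D_a$. Since $(V, R_0, R_1, \ldots, R_p)$ is an association scheme, $\rho_{c, a}^i$ is independent of the choice of representative in $R_i$, so this evaluation is legitimate.

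Assembling the contributions across $a \in GF(p)$, the total count equals
\begin{equation*}
[i = j] + [i = p - j] + \sum_{a \in \{1, \ldots, p-1\} \setminus \{p-j\}} \rho_{a+j, a}^i + \rho_{j, p}^i + \rho_{p, p-j}^i.
\end{equation*}
To arrive at the stated formula, I absorb the two indicator terms into structure constants with a $0$ subscript: $\rho_{j, 0}^i$ counts $z$ with $(b, z) \in R_j$ and $z = 0$, giving $[b \in D_j] = [i = j]$, while $\rho_{0, p-j}^i$ counts $z = b$ with $(z, 0) \in R_{p-j}$, giving $[b \in D_{p-j}] = [i = p - j]$. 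These are precisely the $k = 0$ and $k = p - j$ terms of $\sum_{k=0}^{p} \rho_{j + k\ (\text{mod}\ p),\, k}^i$; together with the intermediate $\rho_{a+j, a}^i$ for $a \in \{1, \ldots, p-1\} \setminus \{p-j\}$ and the $k = p$ term $\rho_{j, p}^i$, they account for all $p + 1$ summands, leaving $\rho_{p, p-j}^i$ as the isolated extra term. The main obstacle is the careful bookkeeping of the four boundary subcases around $0$ and $-b$; once each indicator is matched with the correct "edge" structure constant, the proposition follows by direct enumeration.
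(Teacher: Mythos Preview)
Your argument is correct and follows essentially the same route as the paper: partition the domain by the pair of indices recording where $x$ and $x+b$ land among $D_0,\ldots,D_p$, then use the substitution $x=-z$ (with $D_a=-D_a$ from evenness) to identify each fiber-count with a structure constant at the pair $(b,0)\in R_i$. The only difference is cosmetic: the paper absorbs the boundary cases $x=0$ and $x=-b$ directly by working with $D_0=\{0\}$ from the outset, whereas you isolate them as indicators $[i=j]$ and $[i=p-j]$ and then reinterpret those as $\rho_{j,0}^i$ and $\rho_{0,p-j}^i$.
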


\begin{proof}
Suppose that $x \in D_k$ and $x+b \in D_m$, 
for some $m$ and $k$ such that $0 \leq m,k \leq p$.
Then $f(x+b) - f(x)=j$ if and only if 
$m-k \ (\text{mod} \ p )= j$. 
Either $m = j+k \ (\text{mod} \ p)$, or 
$m=p$ and $k = p-j$.

Recall that the condition that 
$(x,y) \in R_\ell$ is equivalent to the 
condition that $x-y \in D_\ell$.
Therefore, $(b,0) \in R_i$ and
\begin{align*}
\rho_{m,k}^i 
&=|\{z \in GF(p)^n \ | \ 
\text{$(b,z) \in R_m$ and $(z,0) \in R_k$}\}|\\
&=| \{z \in GF(p)^n \ | \ 
\text{$b-z \in D_m$ and $z \in D_k$}\}|.
\end{align*}
Let $x = -z$. Since $f$ is even, $z \in D_k$ if and only 
if $-z \in D_k$.
Therefore, 
\begin{equation*}
\rho_{m,k}^i 
=| \{x \in GF(p)^n \ | \ 
\text{$b+x \in D_m$ and $x \in D_k$}\}|.
\end{equation*}
Summing over all pairs of indices $(m, k)$ such that 
$m-k \ (\text{mod} \ p) = j$, 
we obtain the total number of $x$ in $GF(p)^n$ for which
$f(x+b)-f(x)=j$:
	\[
 	 \sum_{k=0}^{p} \rho_{j+k \ (\text{mod} \ p), k}^i  +\rho_{p,p-j}^i  .
	\]
\hfill 
\end{proof}

From Proposition \ref{propn:balanced}, we 
obtain the following criterion for a function to be 
bent, in terms of structure constants of 
an association scheme.

\begin{proposition}
\label{propn:structureconstantcriterion} 
Let $f \colon GF(p)^n \rightarrow GF(p)$ be an even
function with $f(0)=0$ that 
determines an association scheme 
with structure constants $\rho_{ij}^k$. 
Then $f$ is bent if 
and only if 
\[
\left( \sum_{k=0}^{p} \rho_{j+k \ (\text{mod} \ p), k}^i \right) 
     + \rho_{p,p-j}^i
     =p^{n-1}
\]
for $1 \leq i \leq p$ and $1 \leq j \leq p-1$.
\end{proposition}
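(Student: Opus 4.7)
The plan is to combine the two preceding propositions in a direct fashion. By Proposition \ref{propn:bent-balanced-derivative}, $f$ is bent if and only if $\mathcal D_b f$ is balanced for every nonzero $b \in GF(p)^n$, that is, if and only if every value in $GF(p)$ is attained by $\mathcal D_b f$ exactly $p^{n-1}$ times.

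First, I would observe that the nonzero elements of $GF(p)^n$ partition into $D_1, D_2, \ldots, D_p$, and by Proposition \ref{propn:balanced} the number of $x \in GF(p)^n$ at which $\mathcal D_b f$ takes the value $j$ depends only on the index $i$ for which $b \in D_i$ (not on the particular choice of $b$ within $D_i$). Therefore the universal balanced-derivative condition collapses to a condition indexed by pairs $(i,j)$ with $1 \leq i \leq p$, and the count
\[
\left( \sum_{k=0}^{p} \rho_{j+k \ (\text{mod} \ p),\, k}^i \right) + \rho_{p,p-j}^i
\]
from Proposition \ref{propn:balanced} must equal $p^{n-1}$.

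Next, I would verify that it suffices to impose this equality for $1 \leq j \leq p-1$, as stated. Since $\mathcal D_b f$ is a function from $GF(p)^n$ to $GF(p)$, the total count over all $j \in GF(p)$ is $p^n$. If each of the $p-1$ nonzero values is attained $p^{n-1}$ times, then the value $0$ is attained $p^n - (p-1)p^{n-1} = p^{n-1}$ times automatically, so the $j=0$ case imposes no additional constraint and need not appear in the statement.

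The main obstacle here is essentially nonexistent; the proposition is a direct rewriting of the balanced-derivative criterion in the language of structure constants supplied by Proposition \ref{propn:balanced}. The only mild bookkeeping is to confirm that the hypothesis $b \in D_i$ of Proposition \ref{propn:balanced} uniformly covers all nonzero $b$ via the partition $\{D_1, \ldots, D_p\}$ of $GF(p)^n \setminus \{0\}$, and that the $j=0$ case is automatic as noted above. Both directions of the \emph{if and only if} then follow simultaneously, since each step in the chain is itself an equivalence.
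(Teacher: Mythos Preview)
Your proof is correct and follows exactly the route the paper intends: the paper does not write out a proof of this proposition at all, simply stating that it follows from Proposition~\ref{propn:balanced} (together with the balanced-derivative criterion of Proposition~\ref{propn:bent-balanced-derivative}), and your argument supplies precisely those details, including the observation that the $j=0$ case is automatic by a counting argument.
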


\subsection{Ito--Munemasa--Yamada theorem}
If a $p$-ary function determines 
an amorphic association scheme, 
we can use a theorem of Ito, Munemasa, and Yamada \cite{IMY91}
(which is formulated in more modern notation 
by Van Dam and Muzychuk in \cite[Corollary~1]{vDM10}) 
to obtain the structure constants 
of this amorphic association scheme.

Let $f \colon GF(p)^{2m} \rightarrow GF(p)$
be an even function with $f(0)=0$, where $p$ 
is a prime number greater than 2, and $m$ is a positive integer.
Suppose that $f$ determines an amorphic association scheme.
By the theorem of Gol'fand, Ivanov, and Klin
\cite{GIK94} mentioned in
Section \ref{subsec:vDam},
the component Cayley graphs $\Gamma_i$ of $f$ are  
strongly regular, 
and are either all of Latin square type, 
or all of negative Latin square type. 
Therefore, each graph $\Gamma_i$ has parameters of the form 
\begin{equation*}
(\nu,k,\lambda, \mu)=(N^2, (N-1)r_i, N+r_i^2-3r_i, r_i^2-r_i),
\end{equation*}
where $N$ equals $p^m$ in the Latin square case 
and $-p^m$ in the negative Latin square case, 
and each $r_i$ is an integer 
with the same sign as $N$.
%Let $R_0, R_1, \dots , R_p$ be the binary 
%relations on $GF(p)^{2m}$ determined by $f$.

The structure constants $\rho_{jk}^i$ of an association scheme 
 satisfy $\rho_{jk}^i=\rho_{kj}^i$. 
 Also, $\rho_{0j}^i=\delta_{ij}$ for $1 \leq i,j \leq p$, where $\delta_{ij}=0$
 if $i \neq j$, and $\delta_{ij}=1$ if $i=j$.
We will use the following theorem of 
\cite{IMY91}
to obtain the remaining structure constants.

\begin{theorem}[Ito, Munemasa, Yamada]
\label{thm:IMY}
If $f$ determines an amorphic association scheme, 
then the intersection numbers of the scheme, in the notation above, satisfy 
\begin{itemize}
\item[(a)]
	$\rho_{ii}^i = N +r_i^2-3r_i$ if $1 \leq i \leq p$, 
\item[(b)]
	$\rho_{jj}^i = (r_j-1)r_j$ if $i$ and $j$ are distinct and 
	$1 \leq i,j \leq p$,
 \item[(c)]
	 $\rho_{ij}^i=\rho_{ji}^i=(r_i-1)r_j$ if $i$ and $j$ are distinct and 
	 $1 \leq i,j \leq p$, and
 \item[(d)]
	 $\rho_{jk}^i=\rho_{kj}^i=r_jr_k$ if $i$, $j$, and $k$ are distinct 
	 and $1 \leq i,j,k \leq p$.
 \end{itemize}
 \end{theorem}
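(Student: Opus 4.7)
The plan is to derive (a) and (b) directly from the strong-regularity parameters of the individual component graphs, and then obtain (c) and (d) by fusing pairs of classes and applying strong regularity to the resulting merged graph. No amorphicity is needed for (a) and (b): by definition $\rho_{ii}^i$ counts common neighbors in $\Gamma_i$ of a pair $(x,y) \in R_i$, which is an edge of the strongly regular graph $\Gamma_i$, so this equals $\lambda_i = N + r_i^2 - 3r_i$; and for $i \neq j$ the pair $(x,y) \in R_i$ is a non-edge of $\Gamma_j$, so $\rho_{jj}^i = \mu_j = r_j(r_j - 1)$.

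For (c) and (d), I would exploit the fusion machinery built into the amorphic hypothesis. Fix distinct indices $i, j$ and form the fusion of $\mathcal R$ that merges $R_i$ with $R_j$ into a single class; by amorphicity this is again an association scheme, and further coarsening (lumping all other $R_k$ together) produces a symmetric 2-class scheme, which is equivalent to a strongly regular graph, so $\Gamma_i \cup \Gamma_j$ is itself strongly regular. Since fusions of amorphic schemes are again amorphic, the Gol'fand-Ivanov-Klin theorem cited in \S\ref{subsec:vDam} forces $\Gamma_i \cup \Gamma_j$ to be of the same Latin or negative Latin square type as its summands, and matching its degree $(N-1)(r_i + r_j)$ against the formula in (\ref{eqn:LSTdefn}) identifies its parameter as $r_i + r_j$. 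Thus its $\lambda$ and $\mu$ are $N + (r_i+r_j)^2 - 3(r_i+r_j)$ and $(r_i+r_j)(r_i+r_j-1)$.

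To extract (c), take $(x,y) \in R_i$, which is an edge of the merged graph, and count its common neighbors by splitting over which of the two classes each half-edge to a common neighbor lies in; the count is $\rho_{ii}^i + 2\rho_{ij}^i + \rho_{jj}^i$, which must equal the merged $\lambda$. Substituting the values from (a) and (b) yields a single linear equation solved by $\rho_{ij}^i = (r_i - 1) r_j$. For (d) with distinct $i, j, k$, apply the same argument to the fusion of $R_j$ and $R_k$: now $(x,y) \in R_i$ is a non-edge of the merged graph, so $\rho_{jj}^i + 2\rho_{jk}^i + \rho_{kk}^i$ equals the merged $\mu$, and substituting (b) yields $\rho_{jk}^i = r_j r_k$. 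The main obstacle is the structural input rather than the algebra: one must verify carefully that each fusion considered really does produce a strongly regular graph of Latin or negative Latin square type with parameter equal to the sum of the summand parameters. Once that is secured through amorphicity-under-fusion together with the Gol'fand-Ivanov-Klin theorem, everything else reduces to the short linear equations above.
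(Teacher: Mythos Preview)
Your argument is correct, but it follows a different route from the paper's. The paper proves only the feasible case and does so spectrally: it writes $\rho_{jk}^i = \frac{1}{N^2 k_i}\,\mathrm{tr}(A(i)A(j)A(k))$, expands the trace as $\sum_x \lambda_i(x)\lambda_j(x)\lambda_k(x)$ over the common Hadamard eigenbasis, and then evaluates this sum using Proposition~\ref{propn:distinguished-eigenvalue}, which says that for each $x\neq 0$ exactly one $\lambda_\ell(x)$ equals $N-r_\ell$ and the rest equal $-r_\ell$. The three closed forms for $\mathrm{tr}(A(i)^3)$, $\mathrm{tr}(A(i)^2 A(j))$, and $\mathrm{tr}(A(i)A(j)A(k))$ drop out of a short counting argument, and dividing by $N^2 k_i$ gives (a)--(d).

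Your fusion argument is closer in spirit to the original Ito--Munemasa--Yamada proof and has the advantage of not being tied to the feasible parameters: it works for any amorphic scheme, whereas the paper's computation relies on Proposition~\ref{propn:distinguished-eigenvalue}, which is stated and proved only in the feasible setting. Conversely, the paper's approach is self-contained within the eigenvalue framework already developed and avoids having to argue that the merged graph $\Gamma_i\cup\Gamma_j$ inherits the correct Latin/negative Latin square parameter. On that point your appeal to Gol'fand--Ivanov--Klin needs a $(p{-}1)$-class fusion with at least three classes, so for $p=3$ you should instead note that $\Gamma_i\cup\Gamma_j$ is the complement of the remaining $\Gamma_k$, and the complement of a Latin (resp.\ negative Latin) square type graph with parameter $r_k$ is again of the same type with parameter $N+1-r_k=r_i+r_j$; this closes the small gap.
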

 
The proof below is included for completeness, 
and is only for the special case of interest to us, 
when the component Cayley graphs 
of $f$ are of feasible degrees.
In this case, 
$r_i = \frac N p$ for $1 \leq i \leq p-1$, 
and $r_p = \frac N p +1$.
 
 \begin{proof}
When the component Cayley graphs of $f$ 
 are of feasible degrees, these 
 structure constant formulas 
 may be derived from Proposition 
 \ref{propn:distinguished-eigenvalue} 
 and Equation (\ref{eqn:rhoijkformula}). 
 We note that 
 \[
 tr(A(i)A(j)A(k)) = 
 \sum_{x \in GF(p)^n} 
 \lambda_i(x) \lambda_j(x) \lambda_k(x),
 \]
 where $\lambda_i(x)$ is the eigenvalue 
 of $\Gamma_i$ corresponding to the 
 Hadamard vector $h(x)$ (see \S \ref{subsec:Had}).
 For $x \neq 0$, each eigenvalue 
 $\lambda_i(x)$ is either of the form 
 $N - r_i$ or $-r_i$. 
 For each fixed $x \neq 0$, there is 
 exactly one value of $i$ such that 
 $\lambda_i(x)$ is of the form $N-r_i$.
 Furthermore, the multiplicities of 
 each eigenvalue are also known.
 Let $i$, $j$, and $k$ be distinct integers 
 such that $1 \leq i, j, k \leq p$.
 By a straightforward counting argument, 
 we find that
 \[
 tr\left(A(i)^3\right) = N^2 (N-1)r_i (N+r_i^2-3r_i),
 \]
 \[
 tr\left(A(i)^2 A(j) \right) = 
 N^2(N-1) r_i(r_i-1)r_j,
 \]
 and 
 \[
 tr(A(i) A(j) A(k)) 
 = N^2 (N-1)r_i r_j r_k.
 \]
 Substituting these three cases 
 into Equation (\ref{eqn:rhoijkformula}), 
 we obtain the desired structure constants.  
 \hfill
 \end{proof}

\subsection{Proof that feasible amorphic functions are bent}
In this section, we use our structure constant 
criterion for bent functions to prove that 
if an even $p$-ary function of $2m$ variables, 
vanishing at 0,   
with component Cayley graphs of feasible degrees is amorphic, 
then it is bent.
As usual, the feasible degrees are those 
specified in \S \ref{subsec:feasible-degrees}.

\begin{theorem}
\label{thm:amorphic-bent}
Suppose that $f \colon GF(p)^{2m} \rightarrow GF(p)$ is an 
even function such that $f(0)=0$, where $p$ 
is a prime number greater than 2, and $m$ is a positive integer, 
such that the component Cayley graphs of $f$ 
are of feasible degrees and $f$ determines 
an amorphic association scheme.
Then $f$ is bent.
\end{theorem}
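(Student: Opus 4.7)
The plan is to apply the structure-constant criterion for bent functions (Proposition \ref{propn:structureconstantcriterion}) and to evaluate the resulting sums explicitly using the Ito--Munemasa--Yamada formulas of Theorem \ref{thm:IMY}. This is precisely the \emph{different} route advertised in the introduction, avoiding any direct eigenvalue calculation.

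First I would invoke the Gol'fand--Ivanov--Klin theorem recalled in \S \ref{subsec:vDam}: since $f$ is amorphic with $p \geq 3$ classes, its component Cayley graphs $\Gamma_i$ are all strongly regular and are either all of Latin square type or all of negative Latin square type. Combined with the hypothesis that the $\Gamma_i$ have feasible degrees, this places $f$ in the feasible LST or NLST regime, so that in the notation of Theorem \ref{thm:IMY} the parameters take the explicit values $N = \pm p^m$, $r_i = N/p$ for $1 \leq i \leq p-1$, and $r_p = N/p + 1$. In particular the IMY formulas of cases (a)--(d), together with the identities $\rho_{0k}^i = \rho_{k0}^i = \delta_{ik}$, give every structure constant of the scheme in closed form.

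Next, for each fixed $i \in \{1,\ldots,p\}$ and $j \in \{1,\ldots,p-1\}$ I would evaluate
\[
S_{ij} = \sum_{k=0}^{p} \rho_{j+k \pmod{p},\, k}^{i} + \rho_{p,\,p-j}^{i}
\]
by partitioning the range of $k$ into four types of terms: the two boundary terms $k=0$ and $k=p$ (where $j+k \equiv j \pmod{p}$); the single middle term $k=p-j$ (where $j+k \equiv 0 \pmod{p}$, so the structure constant collapses to $\delta_{i,p-j}$); the remaining $p-2$ \emph{generic} middle terms with both indices of $\rho$ inside $\{1,\ldots,p-1\}$; and the additional correction $\rho_{p,p-j}^{i}$. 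For each generic term, the appropriate case of Theorem \ref{thm:IMY} is selected by counting how many of the three indices $i$, $(j+k) \pmod{p}$, $k$ coincide, and using $\sum_{k=1}^{p-1} r_k = (p-1)N/p$ to collapse the sums of products $r_{\ell} r_{k}$.

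Finally I would collect all contributions and verify, case by case on whether $i\in\{1,\ldots,p-1\}$ or $i=p$ and on whether $i \in \{j, p-j\}$, that $S_{ij} = N^2/p = p^{2m-1} = p^{n-1}$. Invoking Proposition \ref{propn:structureconstantcriterion} then yields bentness. The main obstacle is bookkeeping: the generic quadratic-in-$r$ terms, the three boundary contributions $\rho_{j,0}^i$, $\rho_{j,p}^i$, $\rho_{p,p-j}^i$ (all governed by the anomalous radius $r_p = N/p+1$), and the Kronecker terms $\delta_{ij}$ and $\delta_{i,p-j}$ must conspire to produce the same value $N^2/p$ independently of $i$ and $j$. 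The algebraic mechanism behind this cancellation is that the extra $+1$ in $r_p$ exactly compensates for the combinatorial asymmetry between $D_0 \cup D_p = f^{-1}(0)$ and the other level sets; once this is unpacked, the remaining identities are elementary.
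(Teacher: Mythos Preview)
Your proposal is correct and follows essentially the same route as the paper: invoke Gol'fand--Ivanov--Klin to place $f$ in the feasible LST/NLST regime, then verify the structure-constant criterion of Proposition~\ref{propn:structureconstantcriterion} term by term using the Ito--Munemasa--Yamada formulas of Theorem~\ref{thm:IMY}. The paper organizes the case split by the superscript (whether $s=p$, $s=t$, $s=p-t$, or none of these) and tabulates how many terms of each IMY type (c), (d), and the Kronecker terms $\rho_{0j}^i$ appear, whereas you partition first by the summation index $k$ (boundary $k\in\{0,p\}$, the special $k=p-j$, and generic middle terms) and then split on $i$; but this is a cosmetic difference in bookkeeping, not a different method.
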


\begin{remark}
{\rm
Theorem \ref{thm:amorphic-bent} 
is equivalent to Theorem \ref{thm:main1}, 
due to the results of van Dam \cite{vD03} and
Gol'fand, Ivanov, and Klin \cite{GIK94} 
discussed 
in \S \ref{subsec:vDam}, 
but we include both theorems because the proofs are different.
}
\end{remark}

\begin{proof}
We will use the structure constant criterion 
of Proposition \ref{propn:structureconstantcriterion} 
to show that $f$ is bent: we will show that 
\begin{equation}
\label{eqn:proofsum}
\left( \sum_{k=0}^{p} \rho_{t+k \ (\text{mod} \ p), k}^s \right) + \rho_{p,p-t}^s = p^{2m-1}
\end{equation}
for $1 \leq s \leq p$ and $1 \leq t \leq p-1$.
To evaluate this sum, we 
use the theorem of Ito, Munemasa, and Yamada
(Theorem \ref{thm:IMY}).
We consider the following four cases: 
$s=p$;
$s \neq p$ and $s=t$;
$s \neq p$ and $s = p-t$; and 
$s \neq p$, $s \neq t$, and $s \neq p-t$ (this last
case cannot occur for $p=3$).

Terms of types (a) and (b) in Theorem
\ref{thm:IMY} do not occur in the sum of 
Equation (\ref{eqn:proofsum}). 
In the following chart, we indicate how many times 
terms of each of the remaining types occur in the sum, 
in each of the four cases. 
There are a total of $p+2$ terms in each column.
In the chart, we use the convention that 
$i$, $j$, and $k$ represent 
distinct values in the 
set $\{1, 2, \dots , p-1\}$. 
We also use the notation 
$r = \frac N p$.

\vspace{0.1in}
\begin{tabular}{|l|c|c|c|c|}
\hline 
 &\multicolumn {4} {c|} { Number of occurrences} \\
\hline
Term type 
 & \multirow{2}{*}{$s=p$} 
 & \multirow{2}{*}{$s =t \neq p$} 
 & \multirow{2}{*}{$s =p-t \neq p$}  &
 $s \neq p, t, p-t$ \\
 and value & & & & $p \neq 3$\\
\hline
$\rho_{0i}^i =\rho_{i0}^i=1$  & 0 & 1 & 1 & 0\\
\hline
$\rho_{0j}^i=\rho_{j0}^i=0$  & 0 & 1 & 1 & 2\\
\hline
$\rho_{0i}^p=\rho_{i0}^p=0$  & 2 & 0 & 0 & 0\\
\hline
$\rho_{ip}^p=\rho_{pi}^p=r^2$  & 2 & 0 & 0 & 0\\
\hline
$\rho_{ij}^i=\rho_{ji}^i=r^2-r$  & 0 & 1 & 1 & 2\\
\hline
$\rho_{pi}^i=\rho_{ip}^i=r^2-1$  & 0 & 1 & 1 & 0\\
\hline
$\rho_{jp}^i=\rho_{pj}^i=r^2+r$ & 0 & 1 & 1 & 2\\
\hline
$\rho_{jk}^i=\rho_{kj}^i=r^2$ & $0$ & $p-3$& $p-3$ & $p-4$ \\
\hline
$\rho_{ij}^p=\rho_{ji}^p=r^2$ & $p-2$ & $0$& $0$ & $0$ \\
\hline
\end{tabular}
\vspace{0.1in}

We see that for each column, the number of occurrences of each term type multiplied by the value sums to $pr^2 = p^{2m-1}$.  
For example, for the $s=p$ column, we obtain $2r^2+(p-2)r^2 = pr^2$. 
Therefore, $f$ is bent.
\hfill 
\end{proof}

\subsection{Examples}
To illustrate some properties of amorphic bent 
functions, we include an example from
\cite{CJMPW16}.  
Additional properties of the functions 
in this example are given in 
Examples \ref{ex:GF32fa} and \ref{ex:GF32fb}.
We show a sample sum 
of structure constants of the type 
of Propositions \ref{propn:balanced} 
and \ref{propn:structureconstantcriterion}.
We also give an example of a bent function 
which is not amorphic.

\begin{example} (\cite{CJMPW16})
\label{ex:gf32bent}
Let $f \colon GF(3)^2\to GF(3)$ be an even bent function with $f(0)=0$.
Then $f(x_0,x_1)$ is equivalent to either 
$-x_0^2+x_1^2$ 
or $x_0^2+x_1^2$ under
the action of $GL(2,GF(3))$ on $(x_0,x_1)$.
In each case, the function $f$ 
determines an amorphic association scheme.

\begin{enumerate}
\item
In the case of $-x_0^2+x_1^2$,
the structure constants 
$\rho_{ij}^k$ are given in the following arrays.

\[
\begin{array}{cc}
\begin{array}{c|cccc}
\rho_{ij}^0 & 0 & 1 & 2 & 3 \\ \hline
0         & 1 & 0 & 0 & 0 \\
1         & 0 & 2 & 0 & 0 \\
2         & 0 & 0 & 2 & 0 \\
3         & 0 & 0 & 0 & 4 \\
\end{array}
 &
\begin{array}{c|cccc}
\rho_{ij}^1 & 0 & 1 & 2 & 3 \\ \hline
0         & 0 & 1 & 0 & 0 \\
1         & 1 & 1 & 0 & 0 \\
2         & 0 & 0 & 0 & 2 \\
3         & 0 & 0 & 2 & 2 \\
\end{array} \\
 & \\
\begin{array}{c|cccc}
\rho_{ij}^2 & 0 & 1 & 2 & 3 \\ \hline
0         & 0 & 0 & 1 & 0 \\
1         & 0 & 0 & 0 & 2 \\
2         & 1 & 0 & 1 & 0 \\
3         & 0 & 2 & 0 & 2 \\
\end{array}
 &
\begin{array}{c|cccc}
\rho_{ij}^3 & 0 & 1 & 2 & 3 \\ \hline
0         & 0 & 0 & 0 & 1 \\
1         & 0 & 0 & 1 & 1 \\
2         & 0 & 1 & 0 & 1 \\
3         & 1 & 1 & 1 & 1 \\
\end{array} \\
\end{array}
\]
\item
In the case of $x_0^2+x_1^2$, 
the component Cayley graph $\Gamma_3$ is 
empty, and the
structure constants
$\rho_{ij}^k$ are given in the following arrays.

\[
\begin{array}{cc}
\begin{array}{c|ccc}
\rho_{ij}^0 & 0 & 1 & 2  \\ \hline
0         & 1 & 0 & 0  \\
1         & 0 & 4 & 0  \\
2         & 0 & 0 & 4  \\
\end{array}
 &
\begin{array}{c|cccc}
\rho_{ij}^1 & 0 & 1 & 2  \\ \hline
0         & 0 & 1 & 0  \\
1         & 1 & 1 & 2  \\
2         & 0 & 2 & 2  \\
\end{array} \\
 & \\
\begin{array}{c|cccc}
\rho_{ij}^2 & 0 & 1 & 2  \\ \hline
0         & 0 & 0 & 1  \\
1         & 0 & 2 & 2  \\
2         & 1 & 2 & 1  \\
\end{array}
 &
{\rm no} \  \rho_{ij}^3 \\
\end{array}
\]
\end{enumerate}
\end{example}

\begin{example}
{\rm
Consider the structure constants for 
$-x_0^2+x_1^2$ given above.
The expression in Proposition \ref{propn:balanced},
for $i=1$ and $j=2$, is
	\[
	 \rho^1_{2,0}+\rho^1_{0,1}+\rho^1_{1,2}+\rho^1_{2,3}
	 	 +\rho^1_{3,1}=3.
	\]
The other sums from Proposition \ref{propn:balanced}
can be calculated from the arrays above in a similar manner.
}
\end{example}

\begin{example}
\label{ex:GF52nonexample}
{\rm
Consider the function 
$f \colon GF(5)^2 \to  GF(5)$
given by 
\[
f(x_0,x_1)=3x_0^4+2x_0^2+2x_0 x_1.
\]
It can be checked that $f$ is bent with
\begin{align*}
D_1&=\{ (1, 3), (2, 0), (3, 0), (4, 2)\}\\
D_2&=\{(1, 1), (2, 4), (3, 1), (4, 4)\}\\
D_3&=\{(1, 4), (2, 3), (3, 2), (4, 1)\}\\
D_4&=\{(1, 2), (2, 2), (3, 3), (4, 3)\}\\
D_5&=\{(0, 1), (0, 2), (0, 3), (0, 4), (1, 0), (2, 1), (3, 4), (4, 0)\}
\end{align*}
However, graphs $\Gamma_1$, $\Gamma_2$, and $\Gamma_4$ have 
6 distinct eigenvalues, $\Gamma_3$ has 2 distinct eigenvalues, and $\Gamma_5$ has 
7 distinct eigenvalues, so the graphs are 
not all strongly regular. 
By the theorem of Gol'fand, Ivanov, and Klin
\cite{GIK94} (see \S \ref{subsec:vDam}),
$f$ is not amorphic.
}
\end{example}

We give more examples in \S \ref{sec:manyexamples}.

\section{Orthogonal arrays and bent functions}
Orthogonal arrays are closely related to strongly regular graphs 
of Latin square type, and may be 
used to construct amorphic association schemes.
See \cite[\S 10.4]{GR01}
and \cite{vDM10} for further background on 
these topics.
We will describe how to construct 
bent functions using orthogonal arrays. 

\subsection{Orthogonal arrays}
Let $S$ be a set of size $N$. 
An {\it orthogonal array of size $r \times N^2$ 
with entries in $S$} consists of 
$r$ rows of $N^2$ entries from $S$, 
such that for any two rows, the $N^2$ ordered pairs 
determined by the columns are all distinct.
Such an array is denoted $OA(r,N)$.

We are primarily interested in orthogonal arrays of size 
$(N+1) \times N^2$, where $S=GF(p)^m$, $N=p^m$, 
and $p$ is prime.

\begin{example}
\label{ex:OAGF32}
{\rm
Let $S=GF(3)$, $r=4$, and $N=3$. The following is an 
$OA(4,3)$ with entries in $S$.
\begin{equation*}
\mathcal O \mathcal A = 
%\label{eqn:OA43}
\begin{array}{c c c c c c c c c c c}
0&0&0&\ &1&1&1&\ &2&2&2\\
0&1&2&\ &0&1&2&\ &0&1&2\\
0&1&2&\ &1&2&0&\ &2&0&1\\
0&1&2&\ &2&0&1&\ &1&2&0
\end{array}
\end{equation*}
}
\end{example}

\subsection{Latin square type graphs from orthogonal arrays}
We can form a graph $\Gamma$ from an orthogonal array $OA(r,N)$
as follows.
The vertices of $\Gamma$ are the columns of the array.
Two distinct vertices are connected by an edge exactly when the 
columns have the same entry in one row. 
It is well-known that the graph $\Gamma$ is either complete 
(in the case $r=N+1$) 
or strongly regular of Latin square type.
We include a proof for the convenience of the reader.
We then give examples in which we construct graphs 
from orthogonal arrays 
and use these graphs to construct 
amorphic bent functions.

\begin{lemma}
The graph $\Gamma$ determined by an $OA(r,N)$ is 
either complete or 
strongly regular of Latin square type with parameters 
\[
\left( N^2, (N-1)r, N+r^2-3r,r^2-r \right).
\]
\end{lemma}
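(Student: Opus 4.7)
The plan is to derive all four SRG parameters directly from two elementary combinatorial properties of the orthogonal array. The vertex count $\nu = N^2$ is just the number of columns. For $r \geq 2$, I would first observe that in any single row each element of $S$ appears exactly $N$ times (apply the OA property to the chosen row paired with any other row). The crucial second observation is that any two distinct columns can agree in at most one row: if $c \neq c'$ agreed in two rows $i \neq j$, then the ordered pair $(c_i, c_j) = (c'_i, c'_j)$ would occur twice among the column-pairs of rows $i, j$, violating the OA condition. From these two facts the degree is immediate: each column has exactly $N-1$ other matching columns in each of its $r$ rows, and by the unique-row property these neighbor sets are pairwise disjoint, giving $k = r(N-1)$.

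For $\mu$, given non-adjacent $c, c'$ (disagreeing in every row), a common neighbor $c''$ agrees with $c$ in some unique row $i$ and with $c'$ in some unique row $j$, with $i \neq j$ (else $c, c'$ would themselves be adjacent). For each of the $r(r-1)$ ordered pairs $(i,j)$ with $i \neq j$, the OA property applied to rows $i, j$ produces exactly one column with the prescribed entries in those two rows; a quick check shows it cannot coincide with $c$ or $c'$, and the unique-row property ensures distinct pairs $(i,j)$ yield distinct $c''$. Hence $\mu = r(r-1) = r^2 - r$.

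For $\lambda$, if $c, c'$ are adjacent and agree in the unique row $k$, I would split common neighbors into two disjoint buckets. Type (A): $c''$ meets $c$ and $c'$ in the same row, which must be $k$, giving $N - 2$ choices (the $N$ columns with the common row-$k$ entry, minus $c, c'$). Type (B): $c''$ meets $c$ in row $i$ and $c'$ in row $j$ with $i \neq j$; this accounts for $r(r-1)$ ordered pairs, from which one subtracts the $2(r-1)$ degenerate pairs with $i = k$ or $j = k$ (which would force $c'' \in \{c, c'\}$). The two buckets are disjoint by the unique-row property, so $\lambda = (N-2) + (r-1)(r-2) = N + r^2 - 3r$. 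The complete case is the degree saturation $r(N-1) = N^2 - 1$, i.e.\ $r = N+1$, in which case the $\mu$ count is vacuous and the graph is $K_{N^2}$. The main obstacle throughout is the case-split in the $\lambda$ count: keeping Type (A) disjoint from Type (B) and correctly identifying the $2(r-1)$ degenerate pairs in Type (B), both of which reduce cleanly to the agree-in-at-most-one-row property established at the outset.
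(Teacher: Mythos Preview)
Your proposal is correct and follows essentially the same argument as the paper: both establish the ``agree in at most one row'' property, derive the degree $r(N-1)$ from it, and count common neighbors by the row(s) in which a candidate column matches $v$ and $w$. The only cosmetic difference is in the $\lambda$ count: the paper directly restricts to pairs $(j,k)$ with $j \neq i$ and $k \neq i,j$, obtaining $(r-1)(r-2)$ immediately, whereas you start from all $r(r-1)$ pairs and subtract the $2(r-1)$ degenerate ones---but this is the same count organized differently, and your version has the virtue of making explicit why the degenerate pairs yield $c'' \in \{c,c'\}$.
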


\begin{proof}
Consider any column $v$ of the array. 
The $i$th entry of $v$ occurs in exactly $N-1$ other 
locations in row $i$.
By the definition of an orthogonal array, two columns can 
agree in at most one row.
Thus, vertex $v$ has $(N-1)r$ neighbors.

If $r=N+1$, then each vertex $v$ has $N^2-1$ neighbors, 
and $\Gamma$ is complete. 

Suppose that $v$ and $w$ are neighbors, i.e., 
$v$ and $w$ are distinct columns, 
with equal entries in some row $i$.
A column $u$ which is a neighbor of both $v$ and $w$ 
agrees with each of $v$ and $w$ in exactly one row.
There are $N-2$ neighbors $u$ that 
have the same entry in row $i$ 
as $v$ and $w$. 
Any other neighbor $u$ of  
$v$ and $w$ 
must agree with $v$ in some row $j\neq i$ 
and with $w$ in some row $k\neq i,j$. 
There are $(r-1)(r-2)$ such ordered pairs $(j,k)$, 
each corresponding 
to exactly one neighbor $u$ of $v$ and $w$.
Thus there are $N+r^2-3r$ common neighbors of 
$v$ and $w$. 

Finally, suppose that $\Gamma$ is not complete, 
and that $v$ and $w$ are distinct columns which are not adjacent.
A neighbor $u$ of $v$ and $w$ must agree with $v$ in some row $j$ 
and with $w$ in some row $k \neq j$. 
There are $r(r-1)$ such ordered pairs $(j,k)$, each corresponding 
to exactly one neighbor $u$ of $v$ and $w$.
\hfill
\end{proof}

\begin{example}
\label{ex:K9}
{\rm
The graph $\Gamma$ determined by the orthogonal array 
$\mathcal O \mathcal A = OA(4,3)$ of 
Example \ref{ex:OAGF32} is a complete graph on 9 vertices.
}
\end{example}

\begin{example}
\label{ex:negpos}
{\rm
We may partition the orthogonal array $\mathcal O \mathcal A$ of Example \ref{ex:OAGF32} into 
two smaller orthogonal arrays:
\begin{equation*}
%\label{eqn:OA1}
\mathcal O \mathcal A_1 \ = \ 
\begin{array}{c c c c c c c c c c c}
0&0&0&\ &1&1&1&\ &2&2&2\\
0&1&2&\ &0&1&2&\ &0&1&2
\end{array}
\end{equation*}
and
\begin{equation*}
%\label{eqn:OA2}
\mathcal O \mathcal A_2 \ = \ 
\begin{array}{c c c c c c c c c c c}
0&1&2&\ &1&2&0&\ &2&0&1\\
0&1&2&\ &2&0&1&\ &1&2&0
\end{array}
.
\end{equation*}

We identify the vertices of 
the graphs $\Gamma_1$ and $\Gamma_2$ 
corresponding to $\mathcal O \mathcal A_1$ 
and $\mathcal O \mathcal A_2$
with the vertices of the graph $\Gamma$ 
corresponding to $\mathcal O \mathcal A$ 
in the obvious way.
The graphs $\Gamma_1$ and $\Gamma_2$ are 
both strongly regular with parameters
$( 9, 4, 1, 2 )$.
They are of Latin square type with $N=3$, $r=2$, 
and of negative Latin square type with $N=-3$, $r=-1$.
They form a strongly regular
decomposition of the complete graph 
on $9$ vertices. 
The negative Latin square type 
decomposition is feasible.
By van Dam's theorem 
(see \S \ref{subsec:vDam}), 
this graph decomposition determines an amorphic 
association scheme. 

Let us use the first two entries in each column 
of $\mathcal O \mathcal A$
to identify the 9 vertices of these graphs 
with elements of $GF(3)^2$. 
The neighbors of $(0,0)$ in $\Gamma_1$ form the set
\[ 
D_1 = \{(0,1), (0,2), (1,0), (2,0)\}.
\]
The neighbors of $(0,0)$ in $\Gamma_2$ form the set
\[ 
D_2 = \{ (1,1), (2,2), (1,2), (2,1) \}.
\]
Now, let us define an even function 
$f \colon GF(3)^2 \to GF(3)$ by setting 
\[
f(x) = 
\begin{cases}
0 &\qquad \text{if $x=(0,0)$,}\\
1 & \qquad \text{if $x \in D_1$,}\\
2 & \qquad \text{if $x \in D_2$.}
\end{cases}
\]
The graphs $\Gamma_1$ and $\Gamma_2$ are  
component Cayley graphs of $f$. The third 
component Cayley graph of $f$ is empty.
The function $f$ is amorphic, 
and consequently bent.
It can be shown that $f$ is given by 
\[
f(x_0,x_1) = x_0^2+x_1^2.
\]
See Example 
\ref{ex:GF32fb} for more properties of 
this function.
}
\end{example}

\begin{example}
{\rm
Similarly, if we partition the orthogonal array $\mathcal O \mathcal A$ of Example \ref{ex:OAGF32}
into three arrays, consisting of the first row, 
the second row, and the last two rows, we obtain sets 
$D_1 = \{ (0,1), (0,2)\}$, $D_2=\{ (1,0),(2,0)\}$, 
and $D_3 = \{ (1,1),(2,2), (1,2), (2,1)\}$. 
The corresponding graph decomposition of the 
complete graph on $9$ vertices is a 
strongly regular decomposition.
All three graphs are of Latin square type.
The graphs $\Gamma_1$ and $\Gamma_2$ 
have parameters $(9,2,1,0)$, and 
the graph $\Gamma_3$ has parameters $(9,4,1,2)$.
The sets $D_1$, $D_2$, and $D_3$
determine the amorphic bent function given by
\[
g(x_0,x_1) = -x_0^2+x_1^2.
\]
See Example 
\ref{ex:GF32fa} for more properties of 
this function.
}
\end{example}

\subsection{Bent functions from orthogonal arrays}
In this section, we show how to construct 
an amorphic bent $p$-ary function 
of Latin square type on $2m$ variables, 
for any prime number $p$ greater than 2 and 
any positive integer $m$. 
We start with an orthogonal array of appropriate 
dimensions, and use a generalization of the procedure 
in the previous example.
The technique for constructing amorphic association 
schemes from orthogonal arrays is known 
(see \cite[\S 5]{vDM10}), but we include 
details in the case of interest for completeness.

Thanks to a construction of Bush \cite{Bu52}, it is known that 
it is possible to construct an 
orthogonal array of type $OA(N+1,N)$ 
when $N = p^m$, for every prime number $p$.
If the entries are in $GF(p)^m$, we 
may construct our orthogonal array in such a 
way that  
all entries in the first column are equal to 
the 0 element of $GF(p)^m$.

Let $p$ be a prime number greater than $2$.
Consider a partition of the $N+1$ rows of an 
$OA(N+1,N)$ with entries in $GF(p)^m$ into 
$p-1$ sets of $r = \frac N p$ rows, and 
one set of $r_p=\frac N p +1$ rows. 
We denote the corresponding orthogonal 
subarrays by $\mathcal O \mathcal A_1, \mathcal O 
\mathcal A_2, \ldots , \mathcal O \mathcal A_p$.
This partition determines a strongly regular 
decomposition $\Gamma_1, \Gamma_2, \dots , \Gamma_p$
of the complete graph on 
$p^{2m}$ vertices, consisting of $p$ strongly regular 
graphs of Latin square type. 
(Once again, we identify the vertices of each 
graph $\Gamma_i$ with the vertices of the 
complete graph on $GF(p)^{2m}$ determined 
by the original $OA(N+1,N)$.)
By van Dam's theorem \cite{vD03} 
(see \S \ref{subsec:vDam}), 
this graph decomposition determines an amorphic 
association scheme. 

Let $D_i$ be the set of all neighbors of $0$ in 
the graph $\Gamma_i$ corresponding to $\mathcal O \mathcal A_i$.
By our assumption that the first column of our array 
consists of 0 entries, 
a vertex $v$ is in $D_i$ if and only if it has a $0$ 
entry in one of the rows in $\mathcal O \mathcal A_i$.
Therefore, $D_i$ is symmetric, i.e., 
if $x \in D_i$ then $-x \in D_i$.
Define an even function 
$f \colon GF(p)^{2m} \to GF(p)$ by setting 
\[
f(x) = 
\begin{cases}
0 &\qquad \text{if $x=0$,}\\
i & \qquad \text{if $x \in D_i$ and $1 \leq i \leq p-1$,}\\
0 & \qquad \text{if $x \in D_p$.}
\end{cases}
\]
Then the component Cayley graphs of $f$ are 
the strongly regular Latin square type graphs $\Gamma_i$.
By Theorem \ref{thm:main1} or 
Theorem \ref{thm:amorphic-bent}, the function $f$ is 
amorphic and bent.

\section{Examples}
\label{sec:manyexamples}
In this section we give examples of amorphic 
bent functions of Latin and negative Latin square type, 
together with their duals. We also give 
examples of 5-ary bent functions whose component 
Cayley graphs are not all strongly regular.

\subsection{Examples constructed from orthogonal arrays}
In this section we provide three examples of 
amorphic bent $p$-ary functions of Latin square type, 
which were constructed from orthogonal arrays 
using a computer.
In each case, an orthogonal array of size
$(p^2+1) \times p^4$ was constructed by the method of Bush, 
with symbols in $GF(p^2)$, 
which were then replaced by entries from 
$GF(p)^2$. 
The algebraic form of the resulting function was 
found using \cite[Theorem 53 and Corollary 6]{CJMPW16}.

\begin{example}
\label{ex:OA3LST}
{\rm
The following amorphic $3$-ary bent function on $GF(3)^4$ 
was constructed 
from a $10 \times 81$ orthogonal array, using a computer:
\[
f(x_0,x_1,x_2,x_3)
=2 x_0 x_3 + x_1 x_2 + x_0^2 x_1 x_2 + 2 x_0 x_1^2 x_3.
\]
The component Cayley graphs $\Gamma_i$ of $f$ are all strongly regular 
of Latin square type. The parameters of $\Gamma_1$ and 
$\Gamma_2$ are $(81,24,9,6)$ and the parameters of $\Gamma_3$ 
are $(81,32,13,12)$.
The function $f$ is regular with dual 
\[
f^*(x_0,x_1,x_2,x_3)=x_0 x_3+2x_1 x_2+x_0 x_2^2 x_3 + 2x_1 x_2 x_3^2.
\]
}
\end{example}

\begin{example}
\label{ex:OA5LST}
{\rm
The following amorphic $5$-ary bent function on $GF(5)^4$ 
was constructed 
from an orthogonal array, using a computer:
\[
4 x_0^3  x_3  + 3 x_0^2  x_1   x_2  + 
 x_0   x_1^2  x_3  + 3 x_1^3  x_2  + 
  x_0^4  x_1^3  x_2  +3 x_0^3  x_1^4  x_3 
 \]
The component Cayley graphs $\Gamma_i$ are all strongly regular 
of Latin square type. The parameters of $\Gamma_i$, 
for $1 \leq i \leq 4$, 
are $(625,120,35,20)$ and the parameters of $\Gamma_5$ 
are $(625,144,43,30)$.
The function is regular with dual 
\[
2 x_1 x_2^3 + 4 x_0 x_2^2 x_3 + 2 x_1 x_2 x_3^2 
+ x_0 x_3^3 + 2 x_0 x_2^4 x_3^3 + 
 4 x_1 x_2^3 x_3^4.
\]
 }
\end{example}

\begin{example}
\label{ex:OA7LST}
{\rm
The following amorphic $7$-ary bent function on $GF(7)^4$ 
was constructed 
from an orthogonal array, using a computer:
\[
6  x_0^5  x_3  + 4  x_0^4  x_1   x_2  + 
x_0^3  x_1^2  x_3  + 6  x_0^2  x_1^3  x_2  + 
5  x_0   x_1^4  x_3  + 4  x_1^5  x_2  + 
 5  x_0^6  x_1^5  x_2  +  4  x_0^5  x_1^6  x_3 
 \]
 The component Cayley graphs $\Gamma_i$ are all strongly regular 
of Latin square type. The parameters of $\Gamma_i$, 
for $1 \leq i \leq 6$, 
are $(2401, 336, 77, 42)$ and the parameters of $\Gamma_7$ 
are $(2401, 384, 89, 56)$.
The function is regular with dual 
\[
2 x_0 x_2^4 x_3 +6 x_0 x_2^2 x_3^3 + 
x_0 x_3^5 + 3 x_1 x_2^5 + 
x_1 x_2^3 x_3^2 +  3 x_1 x_2 x_3^4 + 
 3 x_0 x_2^6 x_3^5 + 2 x_1 x_2^5 x_3^6.
\]
 }
\end{example}

\subsection{Examples on $GF(3)^2$}

Every even bent function $f \colon GF(3)^2 \rightarrow GF(3)$ 
with $f(0)=0$ is equivalent to the function of 
Example \ref{ex:GF32fa} or of 
Example \ref{ex:GF32fb} below under 
the action of $GL(2,GF(3))$ on the variables $(x_0,x_1)$ 
(see \cite[Proposition 10]{CJMPW16}).
Example \ref{ex:GF32fa} is amorphic of Latin square type, 
and Example \ref{ex:GF32fb} is amorphic of 
negative Latin square type.

\begin{example}
\label{ex:GF32fa}
{\rm
The function $f \colon GF(3)^2 \rightarrow GF(3)$ given by
\[
f(x_0,x_1)=-x_0^2+x_1^2
\]
is an even bent function with $f(0)=0$.
The component Cayley graphs $\Gamma_1$ and $\Gamma_2$ are strongly regular 
with parameters
$(9,2,1,0)$.
The component Cayley graph $\Gamma_3$ is 
strongly regular with parameters $(9,4,1,2)$.
The graphs $\Gamma_1$, $\Gamma_2$, and $\Gamma_3$ are 
all of Latin square type.  
The function $f$ is amorphic and regular, with 
dual $f^*(x_0,x_1)=x_0^2-x_1^2$.
}
\end{example}

\begin{example}
\label{ex:GF32fb}
{\rm
The function $f \colon GF(3)^2 \rightarrow GF(3)$ given by
\[
f(x_0,x_1)=x_0^2+x_1^2
\]
is an even bent function with $f(0)=0$.
The component Cayley graphs $\Gamma_1$ and $\Gamma_2$ are strongly regular 
with parameters
$(9,4,1,2)$.
The component Cayley graph $\Gamma_3$ is empty.
The graphs $\Gamma_1$ and $\Gamma_2$ are of Latin square 
type with $N=3$ and $r=2$ and of negative Latin square type 
with $N=-3$ and $r=-1$.  
However, only the negative Latin square type parameters 
satisfy the feasibility condition $r = \frac N p$, where $p=3$. 
The function $f$ is amorphic and (-1)-weakly regular, with 
dual $f^*(x_0,x_1)=-x_0^2-x_1^2$.
}
\end{example}

\subsection{Examples on $GF(5)^2$}
\label{subsec:GF52}
In a previous paper, 
\cite[Proposition 14 and Example 65]{CJMPW16}, 
we classified all 
even bent functions
$g \colon GF(5)^2 \rightarrow GF(5)$ 
with $g(0)=0$
into eleven equivalence classes 
under the action of $GL(2,GF(5))$ on 
the variables $(x_0,x_1)$.
Using additional computer calculations, 
it can be shown that the three functions 
of Example \ref{ex:GF52g}
represent the only equivalence classes 
whose functions are of amorphic Latin 
square type, i.e., those
whose component Cayley graphs are 
all strongly regular of Latin square type.
It can also be shown that there is 
no even bent function $g \colon GF(5)^2 \rightarrow GF(5)$ 
with $g(0)=0$ whose component Cayley graphs 
are all strongly regular of feasible negative Latin square type
(see Remark \ref{rk:no_NLST_m1p5}).
The remaining examples in this section 
are not amorphic. 
In these examples, some or all of the 
component Cayley graphs are not strongly regular.

\begin{example}
\label{ex:GF52g}
{\rm
The following three functions 
$g_i \colon GF(5)^2 \rightarrow GF(5)$ 
are even bent functions with $g_i(0)=0$:
\[
g_1(x_0,x_1)=x_0^3x_1+2x_1^4, \
g_2(x_0,x_1)=-x_0x_1^3+x_1^4, \
g_3(x_0,x_1)=-x_0^3x_1+x_1^4.
\]
The component Cayley graphs $\Gamma_i$,
for $1 \leq i \leq 4$,
are strongly regular 
with parameters
$(25,4,3,0)$.
The component Cayley graph $\Gamma_5$ is 
strongly regular with parameters $(25,8,3,2)$.
The graphs $\Gamma_i$, for $1 \leq i \leq 5$,
are all of Latin square type.  
The functions $g_i$ are amorphic and regular, with 
duals 
\[
g_1^*(x_0,x_1)=2x_0^4-x_0 x_1^3, \
g_2^*(x_0,x_1)=x_0^4+x_0^3 x_1, \
g_3^*(x_0,x_1)=x_0^4+x_0 x_1^3.
\]
}
\end{example}

\begin{example}
\label{ex:counter1}
{\rm
The function $g \colon GF(5)^2 \rightarrow GF(5)$ given by
\[
g(x_0,x_1)=-x_0^2 + 2x_1^2
\]
is an even bent function with $g(0)=0$. 
The function $g$ is (-1)-weakly regular, with 
dual $g^*(x_0,x_1) = -x_0^2 + 3x_1^2$.
The degree of $\Gamma_i$, for $1 \leq i \leq 4$, is $k_i=6$.
The graph $\Gamma_5$ is empty.
It can be shown, by 
checking the number of distinct eigenvalues 
of each graph, that the component Cayley 
graphs $\Gamma_i$ are not strongly regular.
The unions $\Gamma_1 \cup \Gamma_4$ and 
$\Gamma_2 \cup \Gamma_3$ are 
strongly regular of negative Latin square 
type with parameters $(25,12,5,6)$.
}
\end{example}

\begin{example}
\label{ex:counter2}
{\rm
The function $g \colon GF(5)^2 \rightarrow GF(5)$ given by
\[
g(x_0,x_1)=-x_0 x_1 + x_1^2
\]
is an even bent function with $g(0)=0$. 
The function $g$ is regular, with 
dual $g^*(x_0,x_1) = x_0^2+x_0x_1$.
The graph $\Gamma_5$ is strongly regular 
of Latin square type with parameters $(25, 8, 3, 2)$.
The degree of $\Gamma_i$, for $1 \leq i \leq 4$, is $k_i=4$.
It can be shown, by 
checking the number of distinct eigenvalues 
of each graph, that the component Cayley 
graphs $\Gamma_1$, $\Gamma_2$, $\Gamma_3$, 
and $\Gamma_4$ are not strongly regular.
The unions $\Gamma_1 \cup \Gamma_4$ and 
$\Gamma_2 \cup \Gamma_3$ are 
strongly regular of Latin square 
type with parameters $(25, 8, 3, 2)$.
}
\end{example}

\begin{example}
\label{ex:counter3}
{\rm
The function $g \colon GF(5)^2 \rightarrow GF(5)$ given by
\[
g(x_0,x_1)=2x_0 x_1^3+x_1^4-x_1^2
\]
is an even bent function with $g(0)=0$. 
The function $g$ is regular, with 
dual $g^*(x_0,x_1)=x_0^2 + x_0^4 + 3 x_0^3 x_1$.
None of the component Cayley graphs $\Gamma_i$ 
is strongly regular.
Moreover, no union of the component Cayley graphs 
$\Gamma_i \cup \Gamma_j$ for $i \neq j$
(and hence no union of the form 
$\Gamma_i \cup \Gamma_j \cup \Gamma_k$
for $i$, $j$, and $k$ distinct)
is strongly regular.
The degrees of the component Cayley graphs 
are $k_i = 4$, for $1 \leq i \leq 4$, 
and $k_5 = 8$.
}
\end{example}

\subsection{Examples on $GF(3)^4$}
Recall that in Example \ref{ex:OA3LST}
we gave a bent function on $GF(3)^4$ 
whose component Cayley graphs are 
all strongly regular of Latin square type. 
We now give two examples of bent 
functions on $GF(3)^4$ whose
component Cayley graphs are all 
strongly regular of negative 
Latin square type.

\begin{example}
{\rm
The function $f \colon GF(3)^4 \rightarrow GF(3)$ given by
\[
f(x_0,x_1,x_2,x_3)=-x_0^2-x_1^2+x_2 x_3
\]
is an even bent function with $f(0)=0$.
The component Cayley graphs $\Gamma_1$ and $\Gamma_2$ are strongly regular with parameters $(81,30,9,12)$.
The component Cayley graph $\Gamma_3$ is 
strongly regular with parameters $(81,20,1,6)$.
The graphs $\Gamma_1$, $\Gamma_2$, and $\Gamma_3$ are 
all of negative Latin square type.  
The function $f$ is amorphic and (-1)-weakly regular, 
with dual 
\[
f^*(x_0,x_1,x_2,x_3)=x_0^2+x_1^2-x_2 x_3.
\]
In this example, $D_1^*=D_2$, $D_2^*=D_1$, and 
$D_3^*=D_3$.
}
\end{example}

\begin{example}
{\rm
The function $f \colon GF(3)^4 \rightarrow GF(3)$ given by
\[
f(x_0,x_1,x_2,x_3) = x_0^2 + x_1^2 +x_0 x_2 + 2 x_2 x_3
\]
is an even bent function with $f(0)=0$.
The component Cayley graphs $\Gamma_1$ and $\Gamma_2$ are strongly regular with parameters $(81,30,9,12)$.
The component Cayley graph $\Gamma_3$ is 
strongly regular with parameters $(81,20,1,6)$.
The graphs $\Gamma_1$, $\Gamma_2$, and $\Gamma_3$ are 
all of negative Latin square type.
The function $f$ is amorphic and (-1)-weakly regular, 
with dual
\[
f^*(x_0,x_1,x_2,x_3) = 2 x_0^2 + 2 x_1^2 +x_0 x_3+  x_2 x_3+2x_3^2.
\]
In this example, we know of no simple relationship 
between the sets $D_1$, $D_2$, and $D_3$ and 
the sets $D_1^*$, $D_2^*$, and $D_3^*$.
}
\end{example}

\subsection{Ideas for further study}

We conclude with some questions and ideas for further study.

\begin{enumerate}
\item Can we find a way to construct all amorphic bent functions of
the type of Theorem \ref{thm:amorphic-bent}? 
Can we count them?
\item
Consider equivalence classes of $p$-ary functions under the 
action of $GL(n,GF(p))$ on coordinates.
Do there exist non-equivalent bent functions 
which determine isomorphic association schemes?
Which of our amorphic examples 
have isomorphic association schemes?
\item
Find examples of functions that are not bent, 
whose level sets determine association schemes 
that are not amorphic. 
\end{enumerate}

\end{document}